\tikzset{decorated arrows/.style={
    postaction={
        decorate,
        decoration={
            markings,
            mark=between positions 0 and 1 step 15mm with {\arrow[black]{stealth};}
            }
        },
    }
}
\tikzset{decorated arrows2/.style={
    postaction={
        decorate,
        decoration={
            markings,
            mark=at position 15mm with {\arrow[black]{stealth};}
            }
        },
    }
}
\def\set@curr@file#1{%
  \begingroup
    \escapechar\m@ne
    \xdef\@curr@file{\expandafter\string\csname #1\endcsname}%
  \endgroup
}
\def\quote@name#1{"\quote@@name#1\@gobble""}
\def\quote@@name#1"{#1\quote@@name}
\def\unquote@name#1{\quote@@name#1\@gobble"}
\theoremstyle{plain}
\newtheorem{thm}{Theorem}
\newtheorem{lem}{Lemma}
\newtheorem{prop}{Proposition}
\newtheorem{cor}[thm]{Corollary}
\newtheorem{maintheorem}{Theorem}
 \theoremstyle{definition}
\newtheorem{defn}{Definition}
\theoremstyle{remark}
\newtheorem*{rem}{Remark}
\theoremstyle{plain}
\newcommand{\NN}{{\mathbb{N}}}
\newcommand{\RR}{\mathbb{R}}
\newcommand{\dpt}{\displaystyle}
\newcommand{\RN}[1]{%
  \textup{\uppercase\expandafter{\romannumeral#1}}%
}
\author[J. P. S. M. de Carvalho and A. A. Rodrigues]{Jo\~ao P. S. Maur\'icio de Carvalho$^{1}$ and Alexandre A. Rodrigues$^{2}$ \\
\\
$^1$\MakeLowercase{jocarvalho@fc.up.pt} \\ $^2$\MakeLowercase{alexandre.rodrigues@fc.up.pt, arodrigues@iseg.ulisboa.pt} \\
\\
$^{1}$Faculty of Sciences, University of Porto,  \\
$^{1}$Centre for Mathematics, University of Porto, \\Rua do Campo Alegre s/n, Porto 4169-007, Portugal \\
$^{2}$Lisbon School of Economics and Management, \\ Centro de Matem\'atica Aplicada \`a Previs\~{a}o e Decis\~{a}o Econ\'omica\\ Rua do Quelhas 6, 1200-781 Lisboa, Portugal \\
}
\begin{document}



\subjclass[2010]{03C25, 34A37, 34C25, 37D45, 37G35}
\keywords{SIR model, Pulse vaccination, Stroboscopic maps, Global Stability, Horseshoes} 
\thanks{The first author was supported by CMUP, Portugal (UIDP/MAT/00144/2020), which is funded by Funda\c{c}\~ao para a Ci\^encia e a Tecnologia (FCT), with national and European structural funds through the programs FEDER, under the partnership agreement PT2020. The second author  was supported by the Project CEMAPRE/REM -- UIDB /05069/2020 financed by FCT/MCTES through national funds.\\ $^1$Corresponding author.}

\title[Pulse vaccination in a  SIR model]
{Pulse vaccination in a  SIR model:\\  global dynamics, bifurcations  and seasonality}

\date{\today}  
 
\begin{abstract}
 We analyze a periodically-forced  dynamical system inspired by the SIR model with impulsive vaccination. We fully characterize its dynamics according to the proportion $p$ of  vaccinated individuals and the time $T$ between doses.   If the {\it basic reproduction number} is less than 1 ({\it i.e.} $\mathcal{R}_p<1$), then we obtain precise conditions for the existence and global stability of a disease-free   {\it $T$-periodic} solution. Otherwise, if $\mathcal{R}_p>1$, then a globally stable  {\it $T$-periodic} solution emerges with  positive coordinates.
 
  We draw a bifurcation diagram $(T,p)$ and we describe the associated bifurcations.
 We also find analytically and numerically chaotic dynamics by adding seasonality to the disease transmission rate.  In a realistic context, low vaccination coverage and intense seasonality may result in unpredictable dynamics.
 Previous experiments have suggested chaos in periodically-forced biological impulsive models, but no analytic proof has been given.
 \end{abstract}

\maketitle


\section{Introduction}
Mathematical models have proved to be a useful tool in epidemiology, providing insights into the dynamics of infectious diseases and giving insights to improve strategies to combat their spread \cite{Hethcote2000, Cobey2020}. In particular, the SIR model, which classifies individuals into {\it Susceptibles} ($S$), {\it Infectious} ($I$) and {\it Recovered} ($R$), has been extensively used \cite{KermackMcKendrick1932,BrauerChavez2012}.

A prompt response is crucial once a disease   emerges in a population. A range of approaches have been explored \cite{Diekmann2012,DriesscheWatmough2002,CarvalhoPinto2021}, with vaccination being one of the most effective ways to stop the  progression \cite{Plotkin2005,Plotkin2011}. 

Among the vaccination strategies in SIR models, we can point out three types:

\begin{enumerate}
\item {\it constant vaccination}, where a fixed proportion of the population is vaccinated \cite{CarvalhoRodrigues2023,Makinde2007,Shulgin1998}, as   in the administration of the BCG vaccine against tuberculosis to newborns; 
\item {\it pulse vaccination}, which involves vaccinating a percentage of the {\it Susceptible} individuals periodically \cite{Shulgin1998,Agur1993,Shulgin2000,LuChiChen2002}, similar to mass vaccination campaigns using oral polio vaccine in areas with polio outbreaks; and 
\item {\it mixed vaccination} \cite{Shulgin1998}, as the hepatitis B vaccination  program, which starts with one shot immediately after birth, followed by subsequent shots.
\end{enumerate}

Identifying the most suitable vaccination strategy represents a challenge, requiring considerations of effectiveness and costs associated with public health policies. 
 Although considerable literature considers the logistic growth of the susceptible population, seasonality and the effect of vaccination, the combination of them remains unexplored.

Pulse vaccination over a proportion $p$ of the population may be a strategy to stop the progression of an infectious disease \cite{Shulgin1998,LuChiChen2002,Meng2008,JinHaque2007}.
The effective strategy   must be in a way that the proportion $p$ of vaccination is at the target level needed for the disease eradication, and the time $T$ between doses must be suitable.
Finding the most appropriate pair $(T,p)$ for specific models is an open problem.

Some authors highlight the presence of seasonal forces in epidemic models such as school holidays, climate change, and political decisions \cite{Buonomo2018}. 
While the seasonal impact is negligible for some diseases, for others such as childhood illnesses and influenza, it may have dramatic consequences in the dynamics of the models {\cite{CarvalhoRodrigues2022}.
Differential equations adjust transmission rates using periodic functions \cite{Keeling2001}, making them more complex than standard models but also more realistic \cite{Barrientos2017,DuarteJanuario2019}.  

This work focuses on the application of a modified SIR model with pulse vaccination and subject to seasonality,  a promising unexplored field.

\subsection*{State of the art}\footnote{ A wide range of epidemiological models address the impact of pulse vaccination. In the section ``state of the art'', we include those that best relate to our work. Readers interested in other models with different particularities can explore the references contained within our reference list.}
  An optimal design of a vaccination program requires, apart from financial and logistical considerations, subtle results in epidemiology that are currently not available in the literature. 
 Several works focus on the study of epidemiological models with pulse vaccination.

In 2002, Lu {\it et al.}~\cite{LuChiChen2002} explored constant and pulse vaccination strategies in an SIR model with vertical and horizontal transmission. Authors showed that the effectiveness of the vaccination strategy depended on the interval between boosters. They   observed that the high susceptibility in the offspring of infected parents accelerates stabilisation, emphasising the importance of parental health.  Numerical simulations supported their findings.

Wang \cite{Wang2015} studied the periodic oscillation of seasonally forced epidemiological models with pulse vaccination. Using Mawhin's coincidence degree method, the author confirmed the existence of positive periodic solutions for these SIR models with pulse vaccination. The effectiveness of this vaccination strategy was supported by numerical simulations. For further investigation using Mawhin's degree of coincidence method, we address the reader to \cite{Feltrin2018, Wang2020, Jodar2008, ZuWang2015} for a more in-depth understanding.

Meng and Chen \cite{Meng2008} analyzed a SIR epidemic model with vertical and horizontal disease transmission. The authors also revealed that under some conditions, the system is {\it permanent}. Moreover, for $\mathcal{R}_0 > 1$ ({\it basic reproduction number}), the system under consideration exhibits positive periodic solutions. 

 Using data from Thailand, authors of  \cite{Kanchanarat2023} concluded that high vaccination coverage is not enough to eradicate measles without an optimised schedule for vaccine shots.

In 2022, authors of \cite{CarvalhoRodrigues2022} investigated a periodically-forced dynamical system inspired by the  SIR model. They provided a rigorous proof of the existence of observable chaos, expressed as persistent {\it strange attractors} on subsets of parameters for $\mathcal{R}_0 < 1$, where $\mathcal{R}_0$ stands for the basic reproduction number. Their results are in line with the empirical belief that intense seasonality can induce chaotic behavior in biological systems. 

 In 2023, Ibrahim and D\'enes \cite{IbrahimDenes2023} developed a seasonal mathematical model to study the transmission of measles, applied to real data of Pakistan. The authors found that measles can become endemic and repeat annually when $\mathcal{R}_0 > 1$. The study showed that increasing vaccination coverage and effectiveness is crucial to reducing transmission and mitigating future outbreaks.

 In a recent study by Guan {\it et al.}   \cite{Guan2024}, an impulsive model was applied to rubella infection data in China. The study evaluated the effectiveness of the impulse vaccination strategy in eradicating rubella, incorporating environmental and genetic factors. Their findings highlighted that pulse vaccination can successfully eradicate rubella under favourable conditions. 


\subsection*{Achievements}
The present work provides insights into the interplay between seasonality, impulsive differential equations and the presence of horseshoes in periodically-forced epidemic models. The main goals of this article are the rigorous proof of the following assertions:

\begin{enumerate}
\item  in the absence of seasonality, the model goes through five scenarios when varying the parameters of the period $T$ of the vaccine, and the proportion of {\it Susceptible} individuals vaccinated $p$;
\smallskip
\item in the absence of seasonality, we design an optimum vaccination program as a function of the period $T$ of pulse vaccination and the proportion $p$ of {\it Susceptible} individuals that need to be vaccinated to control the disease;
\smallskip
\item under the presence of seasonality in the rate transmission rate of the disease, the system may behave chaotically and exhibits topological horseshoes.
\end{enumerate}

\noindent The bifurcations between the different scenarios have been identified and explored. All the results  are illustrated with numerical simulations.  

\subsection*{Structure}


 In this paper, we analyze a modified SIR model to study the impact of the pulse vaccination strategy with and without seasonality. Section \ref{s:preliminaries} provides fundamental insights into impulsive differential equations and the important concepts of periodic solutions and stability. Section \ref{model_model} introduces the model (with and without vaccination and with and without seasonality) and clarifies the role of the  hypotheses. Also, in this section, we present our two main results. From Section \ref{PREP_SECT} to Section \ref{Section_of_stability}, we analyze the model from a mathematical point of view: we find the fixed points of the associated stroboscopic maps,  compute the {\it basic reproduction number}, and evaluate the local stability of the disease-free periodic solution. From Section \ref{section: item (2)} to Section \ref{SA_lab}, we prove all items of the main results. In Section \ref{s:numerics}, we provide some numerical simulations that support the theoretical results and, finally, in Section \ref{s:section}, we relate our findings with others in the literature.

\section{Preliminaries}
\label{s:preliminaries}
For the sake of   self-containedness of the paper, we present the basic definitions and notation of the theory of impulsive dynamical systems we need. We also include some fundamental results which are necessary for understanding the theory. This information can  be found in \cite{Dishliev, Lakshmikantham_livro} and Chapter 1 of \cite{Agarwal_livro}. 

\subsection{Instantaneous impulsive differential equations}\label{object}

An impulsive differential equation is given by an ordinary differential equation coupled with a discrete map defining the ``jump'' condition. The law of evolution of the process is described by the differential equation
 $$
 \frac{dx}{dt}=f(t,x)
 $$
  where $t\in \RR$, $x\in \Lambda \subset \RR^n$ and $f: \RR  \times \Lambda \rightarrow \Lambda $ is $C^1$.
The instantaneous impulse at time $t$ is defined by the jump map $J(t,x): \RR \times \Lambda \rightarrow \Lambda$ given by
$$
(t,x)\mapsto x+J(t,x).
$$

\noindent Throughout this article, we focus on the {\it Instantaneous impulsive equation}:
\begin{eqnarray}
\label{prel1}
\frac{dx}{dt}&=&f(t,x), \qquad t\neq T_k, \\ 
\nonumber \Delta x(T_k), &=& J_k(x),  \qquad  \,    k\in \NN_0,   \\
\nonumber x(0)&=&x_0 ,   
\end{eqnarray}

\medskip

\noindent where $x\in \Lambda \subset \RR$,  the impulse is fixed at the sequence $(T_k)_{k\in \NN_0}$ such that $T_0=0$ and
$$\forall k\in \NN \quad T_k < T_{k+1} \qquad \text{and }\qquad \lim_{k\in \NN}T_k=+\infty.$$
\noindent The  instantaneous ``jump'' $\Delta x(T_k)=J(T_k,x)\equiv  J_k(x)$ of \eqref{prel1} is defined by
$$
\Delta x(T_k):= \lim_{t\rightarrow T_k^+}\varphi(t,x)-  \lim_{t\rightarrow T_k^-}\varphi(t,x). 
$$
 
\noindent For $k\in \NN$ and $t  \in [T_k,T _{k+1})$,  $\varphi(t,x)$ is a solution of $\dpt \frac{dx}{dt}=f(t,x)$; for $t= T_k$, $\varphi$  satisfies
$$\lim_{t\rightarrow T_k^+}\varphi(t,x)= \lim_{t\rightarrow T_k^-}\varphi(t,x) + J_k\left(\lim_{t\rightarrow T_k^-}\varphi(t,x)\right),$$
  where $x\in \Lambda$.  The next result concerns the existence of a unique solution for  \eqref{prel1}.

\medskip

\begin{prop}[\cite{Lakshmikantham_livro, Bainov_livro}, adapted]
Let the function $f \colon \RR\times \Lambda \rightarrow \RR^n$ be continuous in the sets $[T_k, T_{k+1}[ \times \Lambda$, where $k\in \NN$. For each $k \in \NN$ and $x \in \Lambda$, suppose there exists (and is finite) the limit of $f(t,y)$ as $(t,y)\rightarrow (T_k,x)$, where $ t >T_k$. Then, for each $(t_0, x_0)\in \RR\times \Lambda$ there exist $\beta>t_0$ and a solution $\varphi\colon \, ]t_0, \beta[\rightarrow \RR^n$ of the  IVP \eqref{prel1}. If $f$ is $C^1$ with respect to $x$ in $\RR\times \Lambda$, then the solution is unique.
\end{prop}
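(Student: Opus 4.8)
The plan is to reduce the problem to the classical Peano and Picard--Lindel\"of theorems by solving the ordinary differential equation on each inter-impulse interval separately and then concatenating the pieces across the jump instants. First I would note that, although $f$ is only assumed continuous on the half-open slabs $[T_k,T_{k+1}[\,\times\Lambda$, the hypothesis that the limit of $f(t,y)$ as $(t,y)\to(T_k,x)$ with $t>T_k$ exists and is finite lets $f$ be extended to a genuinely continuous function on each closed slab $[T_k,T_{k+1}]\times\Lambda$. This is the step that makes the standard ODE machinery applicable right up to the impulse times.

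Second, I would establish local existence. Given $(t_0,x_0)$, let $k_0$ be the index with $t_0\in[T_{k_0},T_{k_0+1}[$. On this slab $f$ is continuous, so Peano's theorem yields some $\beta_0>t_0$ and a solution $\varphi$ of $\dpt\frac{dx}{dt}=f(t,x)$ with $\varphi(t_0)=x_0$ on $[t_0,\beta_0[$. If $\beta_0\le T_{k_0+1}$ we are already done with $\beta=\beta_0$; otherwise the solution reaches $T_{k_0+1}$, the one-sided limit $\lim_{t\to T_{k_0+1}^-}\varphi(t,x)$ exists, and the jump map is applied, producing $\lim_{t\to T_{k_0+1}^-}\varphi+J_{k_0+1}\!\left(\lim_{t\to T_{k_0+1}^-}\varphi\right)$. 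This value serves as the initial datum for a fresh application of Peano's theorem on the next slab $[T_{k_0+1},T_{k_0+2}[$. Iterating and concatenating produces a solution on some interval $]t_0,\beta[$ which satisfies the differential equation off the impulse times and the jump relation at each $T_k$, i.e.\ exactly the notion of solution specified before \eqref{prel1}.

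Third, for uniqueness I would upgrade Peano to Picard--Lindel\"of: the assumption that $f$ is $C^1$ in $x$ makes it locally Lipschitz in $x$, so on each slab the initial value problem has a \emph{unique} solution. Since each jump map $J_k$ is single-valued, the datum fed into the next slab is uniquely determined by the value arriving at $T_k$; hence uniqueness propagates across every impulse time. A short induction on $k$ then gives uniqueness of the concatenated solution on $]t_0,\beta[$.

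The main obstacle, and the only genuinely impulsive part of the argument, is the matching at the jump times: one must check that the one-sided limits are well defined (secured by the finiteness hypothesis on the limit of $f$), that the jumped value still lies in $\Lambda$ so that the next Peano or Picard step is legitimate, and that the concatenated function really qualifies as a solution in the precise sense required by \eqref{prel1}. Everything else is careful bookkeeping of the classical local existence and uniqueness theorems applied slab by slab.
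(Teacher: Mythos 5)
Your argument is, in substance, the standard proof from the cited sources \cite{Lakshmikantham_livro, Bainov_livro} --- the paper itself states this proposition without proof --- and your slab-by-slab reduction (using the finite one-sided limits to extend $f$ continuously up to the impulse times, then Peano for existence and Picard--Lindel\"of, via local Lipschitz continuity from the $C^1$ hypothesis, for uniqueness) is exactly the intended route. One remark: since the proposition claims only local existence on some $]t_0,\beta[$, your first Peano step on the slab containing $t_0$ already suffices, and the concatenation across impulse instants --- whose legitimacy hinges on the jumped value $\eta + J_k(\eta)$ remaining in $\Lambda$, a condition you correctly flag but which is not among the hypotheses here --- is precisely the content of the paper's subsequent extendability result (Proposition \ref{Theorem2.1}), so it is best deferred there rather than carried out in this proof.
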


 The following result imposes conditions where  the solution $\varphi$ may be {\it extendable}.

\begin{prop}[\cite{Lakshmikantham_livro,Bainov_livro}, adapted]
\label{Theorem2.1}
Let the function $f \colon \RR\times \Lambda \rightarrow\RR^n$ be $C^1$ in the sets $[T_k, T_{k+1}[ \times \Lambda$, where $k\in \NN$. For each $k \in \NN$ and $x \in \Lambda$, suppose there exists (and is finite) the limit of  $f(t,y)$ as $(t,y)\rightarrow (T_k,x)$, where $ t >T_k$. If \,$\varphi \colon  ]\alpha, \beta[ \rightarrow \RR^n$ is a solution of \eqref{prel1}, then the solution is extendable to the right of $\beta$  if and only if
$$\dpt \lim_{t\rightarrow \beta^-}\varphi(t,x)=\eta ,$$

\smallskip

\noindent and one of the following conditions holds:

\smallskip

\begin{enumerate}
\item $\beta\neq T_k$, for any $k\in \NN_0$ and $\eta \in \Lambda$;

\medskip

\item $\beta = T_k$, for some $k\in \NN_0$ and $\eta + J(T_k, \eta) \in \Lambda$.
\end{enumerate}
\end{prop}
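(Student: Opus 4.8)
The plan is to prove the two implications of the equivalence separately, and in each direction to distinguish two cases according to whether $\beta$ is an impulse instant or not. The geometric picture is simple: extending $\varphi$ past $\beta$ amounts to continuing the trajectory by the flow of the ODE, possibly after applying the jump $J(T_k,\cdot)$ when $\beta=T_k$, and both operations can be performed precisely when the relevant terminal point lies in $\Lambda$.

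For necessity, suppose $\varphi$ admits an extension $\tilde\varphi\colon\,]\alpha,\gamma[\,\to\RR^n$ with $\gamma>\beta$ that is again a solution of \eqref{prel1}. On the interval immediately to the left of $\beta$ the map $\tilde\varphi$ solves $dx/dt=f(t,x)$ and is therefore continuous from the left at $\beta$; hence $\lim_{t\to\beta^-}\varphi(t)=\lim_{t\to\beta^-}\tilde\varphi(t)=:\eta$ exists. If $\beta$ is not an impulse instant, then $\tilde\varphi$ is continuous at $\beta$ and $\eta=\tilde\varphi(\beta)$ is a point at which the ODE is solved, so $\eta\in\Lambda$, giving (1). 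If $\beta=T_k$, the jump relation forces $\lim_{t\to\beta^+}\tilde\varphi(t)=\eta+J(T_k,\eta)$, which is the value from which $\tilde\varphi$ continues by the ODE; hence $\eta+J(T_k,\eta)\in\Lambda$, giving (2).

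For sufficiency I would invoke the local existence result stated in the preceding proposition to launch a fresh solution branch to the right of $\beta$ and then glue it to $\varphi$. In case (1), since $\beta\in\,]T_k,T_{k+1}[$ for some $k$ and $\eta\in\Lambda$, the field $f$ is $C^1$ near $(\beta,\eta)$, so there is a solution $\psi$ on $[\beta,\beta+\varepsilon[$ with $\psi(\beta)=\eta$; setting $\tilde\varphi=\varphi$ on $]\alpha,\beta[$ and $\tilde\varphi=\psi$ on $[\beta,\beta+\varepsilon[$ yields a function continuous at $\beta$ (because the left limit equals $\eta=\psi(\beta)$) that solves \eqref{prel1}. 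In case (2), put $\xi:=\eta+J(T_k,\eta)\in\Lambda$; the standing hypothesis that $f(t,y)$ has a finite limit as $(t,y)\to(T_k,x)$ with $t>T_k$ is exactly what lets the existence result produce a solution $\psi$ on $[T_k,T_k+\varepsilon[$ with $\psi(T_k)=\xi$, and gluing $\varphi$ to $\psi$ while recording the prescribed jump at $T_k$ again furnishes an extension.

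The main obstacle, and essentially the only point demanding care, is verifying that the glued object $\tilde\varphi$ genuinely satisfies the definition of a solution of the impulsive system at the junction $\beta$, that is, checking the matching of one-sided limits and, when $\beta=T_k$, the jump identity $\lim_{t\to T_k^+}\tilde\varphi=\lim_{t\to T_k^-}\tilde\varphi+J_k\bigl(\lim_{t\to T_k^-}\tilde\varphi\bigr)$. This is precisely where the one-sided-limit hypothesis on $f$ at the points $T_k$ enters: without it the local existence theorem could not be applied to start the branch $\psi$ from an impulse instant, and the continuation argument would break down.
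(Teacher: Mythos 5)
Your argument is correct, and it is essentially the standard continuation proof of this result: necessity by extracting the left limit of an extended solution (using continuity at non-impulse times and the jump relation at $\beta=T_k$), sufficiency by restarting a local solution at $\eta$ or at $\eta+J(T_k,\eta)$ via the local existence proposition and gluing, with the one-sided-limit hypothesis on $f$ at the $T_k$ ensuring the branch can be launched from an impulse instant. Note that the paper itself gives no proof --- it states the proposition as adapted from \cite{Lakshmikantham_livro,Bainov_livro} --- and your proof matches the argument in those references, so there is nothing to flag.
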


 Under the conditions of Proposition \ref{Theorem2.1},  for each $(t_0 , x_0 ) \in \RR \times \Lambda$, there exists a
unique solution $\varphi(t, x_0)$ of   \eqref{prel1} defined in $\RR$ (\cite{Lakshmikantham_livro, Bainov_livro}) which can be written as
 \begin{equation*}
\varphi(t, x_0)= \left\{
\begin{array}{l}
\dpt x_0 +\int_{t_0}^t f(s, \varphi(s, x_0)) \text{ds} + \sum_{t_0\leq T_k<t}J(\varphi(T_k, x_0)) \quad {\text{for}}\quad t\geq t_0,\\ 
\dpt x_0 +\int_{t_0}^t f(s, \varphi(s, x_0)) \text{ds} + \sum_{t\leq T_k<t_0}J(\varphi(T_k, x_0))  \quad {\text{for}}\quad t\leq t_0.\\ 
\end{array}
\right.
\end{equation*}

\smallskip

\begin{defn}
We say that $\mathcal{K} \subset (\RR^+_0)^2$ is a {\it positively flow-invariant set} for \eqref{prel1} if for all $x \in \mathcal{K}$, the trajectory of $\varphi(t, x)$ is  contained in $\mathcal{K}$ for $t \geq 0$.
\end{defn}
 
 For a solution of \eqref{prel1} passing through $x\in \RR^n$, the set of its accumulation points, as $t$ goes to $+\infty$, is the {\it $\omega$-limit set} of $x$. More formally, if $\bar{A}$ is the topological closure of $A\subset \RR^n$, then: 
\begin{defn} If $x\in \RR^n$, then the {\it $\omega$-limit} of $x$ is
 $$
\omega(x)=\bigcap_{T=0}^{+\infty} \overline{\left(\bigcup_{t>T}\varphi(t, x)\right)}.
$$ 
\end{defn}

  It is well known that $\omega(x)$  is closed and {\it flow-invariant}, and if the  trajectory of $x$ is contained in a compact set, then 
$\omega(x)$ is non-empty. If $E$ is an invariant set of \eqref{prel1}, we say that $E$ is a {\it global attractor} if $\omega(x) \subset E$, for Lebesgue almost all points $x$ in   $\mathbb{R}^n$.

\subsection{Periodic solutions and stability}
\label{ss: definitions}
The following definitions have been adapted from \cite{Milev1990, SimeonovBainov1988, HirschSmale1974}.
For $T>0$, we say that $\varphi$ is a {\it $T$-periodic} solution of \eqref{prel1} if and only if there exists $x_0\in \Lambda$ such that 
\begin{equation}
\label{periodic_def}
\forall  t\in \RR \quad \varphi (t, x_0)= \varphi (t+T, x_0).
\end{equation}

\noindent  We disregard constant solutions and we consider  the smallest positive value $T$ for which \eqref{periodic_def} holds. Let $x_0 \in \Lambda$ be such that $\varphi(t, x_0 )$ is a  {\it $T$-periodic} solution of \eqref{prel1}. We say that $\varphi(t, x_0)$ is:
\smallskip
\begin{enumerate}
\item {\it stable} if, for any neighborhood $V$ of $x_0$, there is a neighborhood $W\subset V$ of $x_0$ such that for all $y_0 \in W$ and for all $k\in \NN$ we have $\varphi(kT,y_0 )\in V$;
\smallskip
\item {\it asymptotically stable} if, it is stable and there exists a neighborhood
$V$ of $x_0$ such that for any $y_0 \in V$ and $\dpt \lim_{k\rightarrow +\infty} \varphi(kT, y_0 )= x_0$;
\smallskip
\item {\it unstable} if it is not stable.
\end{enumerate} 

\section{Model} \label{model_model}

Similarly to the classical SIR model \cite{Dietz1976}, the population in the model under consideration is divided into three subpopulations:

\smallskip

\begin{itemize}
\item {\it Susceptibles}: individuals that are currently not infected but can contract the disease;  

\smallskip

\item  {\it Infectious}: individuals who are currently infected and can actively transmit the disease to a susceptible individual until their recovery;

\smallskip

\item {\it Recovered}: individuals who currently can neither be infected nor infect susceptible individuals. This comprises individuals with permanent immunity because they have recovered from a recent infection or have been vaccinated. 
\end{itemize}

\medskip

  Let $S$, $I$, and $R$ denote the proportion of individuals within the compartment of {\it Susceptible}, {\it Infectious}, and {\it Recovered} individuals (within the whole population).
\noindent  Susceptible individuals are those who have never had contact with the disease. Once they have  contact with the disease, they become {\it Infectious}. They  are ``transferred'' to the {\it Recovered} class if they do not die. Those who recover from the disease get lifelong immunity. We add effective pulse vaccination to a proportion $p\in [0,1]$ of the Susceptible Individuals, providing lifelong immunity.

We propose the following nonlinear impulsive differential equation in $S$, $I$, and $R$ (depending on time $t\in \RR_0^+$):

\begin{equation}
\label{modeloSIR}
\begin{array}{lcl}
\dot{X} = \mathcal{F}_\gamma(X) \quad \Leftrightarrow \quad
\begin{cases}
\medskip
&\dot{S} = S(A-S) - \beta_{\gamma} {(t)} I S  \\
\medskip
&\dot{I} =\beta_{\gamma}{(t)}IS - \left(\mu +d+g\right) I,   \qquad \qquad \quad t \neq nT  \\
\bigskip
&\dot{R} = gI - \mu R \\
\medskip
&S(nT) = (1-p) S(nT^-)  \\
\medskip
&I(nT) = I(nT^-)   \qquad \qquad \qquad \quad \quad 
\\
\medskip
&R(nT) = R(nT^-) + p S(nT^-)
\end{cases}
\end{array}
\end{equation}

\smallskip

\noindent where $n \in \NN$, $p\in [0,1]$,

\smallskip

$$
\begin{array}{rcl}
X(t) &=& \left( S(t), I(t), R(t) \right), \\
\\
X(t_0) &\coloneqq& X_0 \,\,\, \text{is the initial condition  (in general $t_0=0$)}, \\
\\
\dot{X} &=& \left(\dot{S}, \dot{I}, \dot{R}\right) \,\,\, = \,\,\, \dpt \left(\frac{\mathrm{d}S}{\mathrm{d}t},\frac{\mathrm{d}I}{\mathrm{d}t},\frac{\mathrm{d}R}{\mathrm{d}t}\right), \\
\\
\beta_\gamma(t) &=& \beta_0 \left(1+\gamma \Psi (\omega t)\right), \\
\\
 X(nT^-) &=& \displaystyle \lim_{t \,\rightarrow \,nT^-} X(t),
\end{array}
$$
\medbreak

 \noindent $A \in (0,1]$, $\gamma \geq 0$, $\omega>0$ and $\Psi$ is   periodic. The number $S(nT^-)=\displaystyle \lim_{t \,\rightarrow \,nT^-} S(t)$ corresponds to the {\it Susceptibles} at the instant immediately before being vaccinated for the  $n^{th}$ time. The model without vaccination corresponds to $p=0$. Figure \ref{boxes} represents the dynamics of~\eqref{modeloSIR}.

\usetikzlibrary{arrows,positioning}
\begin{center}
\begin{figure}[ht!]
\begin{tikzpicture}
[
auto,
>=latex',
every node/.append style={align=center},
int/.style={draw, minimum size=1.75cm}
]

    \node [fill=lightgray,int] (S)             {\Huge $S$};
    \node [fill=lightgray,int, right=5cm] (I) {\Huge $I$};
    \node [fill=lightgray,int, right=11cm] (R) {\Huge $R$};
    
    \node [below=of S] (s) {} ;
    \node [below=of I] (i) {} ;
    \node [below=of R] (r) {};
    
    \node [above=of S] (os) {};
    \node [above=of I] (oi) {};
    \node [above=of R] (or) {};
    
    \coordinate[right=of I] (out);
    \path[->, auto=false,line width=0.35mm]
    			(S) edge node {$\beta_\gamma(t) I S$ \\[.6em]} (I)
                          (I) edge node {$gI$       \\[.6em] } (R) ;

    \path[->, auto=false,line width=0.35mm]
			(S) edge  [out=-120, in=-60] node[below] {$S(A-S)$ \\ [0.2em] {\it Logistic growth}} (S)
    			(I) (5.88,-0.87cm) edge [] node[right]{$\sigma I$} (5.88,-2cm) (i) 
    			(R) (11.88,-0.87cm) edge [] node[right]{$\mu R$} (11.88,-2cm) (r) ;

    \path[-, auto=false,line width=0.35mm]
    			(S) (0,0.87cm) edge [] node[right]{} (0,2cm) (os)
			(os) (0,2cm) edge [] node[above]{$p S(nT^-)$} (11.88,2cm) (or)
			(or) (11.88,2cm) edge [->] node[below]{} (11.88,0.87cm) (R) ;
			
     
\end{tikzpicture}
\caption{\small Schematic diagram of model \eqref{modeloSIR}. Boxes represent the subpopulations  $S,I$, and $R$, and arrows indicate the flow between the compartments.}
\label{boxes}
\end{figure}
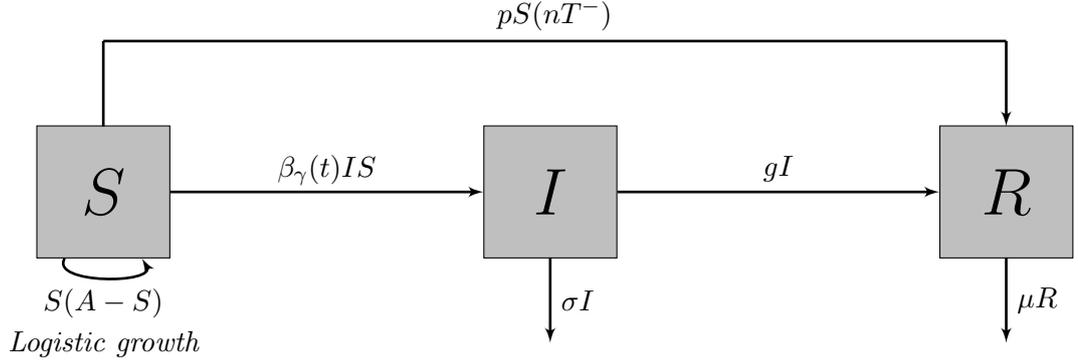
\end{center}

\subsection{Description of the parameters} We can describe the parameters of   \eqref{modeloSIR} as: \\

\begin{description}
\item[$A$] the carrying capacity of the {\it Susceptible} for $\beta_0=0$, {\it i.e.} in the absence of disease; 

\smallskip

\item[$\gamma$]  the amplitude of the seasonal variation that oscillates between $\dpt  \beta_0 \Big(1+\gamma \min_{t\in [0,\tau]} \Psi(t) \Big)>0$ in the low season, and $\dpt \beta_0 \Big(1+\gamma \max_{t\in [0,\tau]} \Psi(t) \Big)$ in the high season, for some $\tau>0$; 

\smallskip

\item[$\Psi(\omega t)$] the effects of periodic seasonality over the time $t$ with frequency $\omega>0$; 

\smallskip

\item[$\beta_0$]  the disease transmission rate in the absence of seasonality (when $\gamma=0$). The parameter $\gamma$ ``measures the deformation'' of the transmission rate;

\smallskip

\item[$\mu$] the natural death rate of infected and recovered individuals;

\smallskip

\item[$d$] the death rate of infected individuals due to the disease;

\smallskip

\item[$p$] the proportion of the {\it Susceptibles} periodically vaccinated; 

\smallskip

\item[$g$] the cure rate;

\smallskip

\item[$T$] the positive period at which a proportion $p$ of the {\it Susceptible} individuals is vaccinated.
\end{description}
\bigbreak
\noindent In order to simplify the notation, we denote $\mu + d$ by $\sigma$.

\subsection{Hypotheses and motivation}

Regarding   \eqref{modeloSIR}, we assume:

\smallskip

\begin{itemize}
\item[{\bf (C1)}] All parameters are non-negative;

\smallskip
 
\item[{\bf (C2)}] For all $t \in \RR^+_0$, $S(t) \leq A$;

\smallskip
 
\item[{\bf (C3)}]  $S(t)$, $I(t)$, and $R(t)$ are proportions over the whole population. In particular, we have $S(t) + I(t) + R(t) = 1$, for all $t\in\RR_0^+$;

\smallskip
\item[{\bf (C4)}] For $\tau > 0$ and $\gamma > 0$, the map $\Psi: \RR \rightarrow \RR^+$ is  $\tau$-periodic, $\dpt \dfrac{1}{\tau} \int_0^{\tau} \beta_{\gamma}(t) \, \mathrm{d}t > 0$ and has (at least) two nondegenerate critical points\footnote{This corresponds to a generic periodically-forced perturbation.}.
\end{itemize}

\medskip
 From  {\bf (C2)} and  {\bf (C3)}, since $A$ is the carrying capacity of $S$, then $A \in (0,1]$. The phase space of \eqref{modeloSIR} is a subset of $(\mathbb{R}_0^+)^3$, endowed with the usual Euclidean distance (denoted by ${dist}$) and the set of parameters is described as follows (for $\varepsilon>0$ small):

$$  {{\Omega} \subset \left\{ (A,\beta_0,p,\sigma,g) \in  (\RR_0^+)^5 \right\}, \qquad T \in \RR_0^+, \quad \gamma\in [0,\varepsilon] \quad \text{and}\quad  \omega \in \mathbb{R}^+}.$$

\medskip
 \begin{rem}
System \eqref{modeloSIR} has been motivated by the classical SIR model \cite{KermackMcKendrick1932} with the following modifications:

\begin{itemize}
\item We analyze the logistic growth of the {\it Susceptible} individuals as a result of crowding and natural competition for resources \cite{CarvalhoRodrigues2022, Li2017, ZhangChen1999} instead of assuming linear or exponential growth;

\smallskip

\item  The model includes a pulse-vaccination strategy to fight the disease where the {\it Susceptible} individuals are $T$-periodically vaccinated \cite{Shulgin1998,Agur1993}.
\end{itemize}
 \end{rem}
\smallskip
 The first two equations of (\ref{modeloSIR}), $\dot{S}$ and $\dot{I}$, are independent of $R$. Therefore we can reduce (\ref{modeloSIR}) to
 
\begin{equation}
\label{modelo2}
\begin{array}{lcl}
\dot{x} = f_{\gamma}(x) \quad \Leftrightarrow \quad
\begin{cases}
\medskip
&\dot{S} = S(A-S) - \beta_{\gamma}  {(t)}I S  \\
\bigskip
&\dot{I} = \beta_{\gamma}  {(t)}IS - \left(\sigma+g\right) I , \, \qquad \qquad \qquad t \neq nT  \\
\medskip
 &S(nT) = (1-p) S(nT^-)  \\
\medskip
&I(nT) = I(nT^-)  
\end{cases}
\end{array}
\end{equation}

\noindent with $x=(S,I)\in (\RR_0^+)^2$.

\subsection{No vaccination and no seasonality}
 If we do not consider neither vaccination nor seasonality in  \eqref{modelo2} ($\Leftrightarrow \,\, p = 0$ and $\gamma=0$), then   (\ref{modelo2}) can be rewritten as
 
\begin{equation}
\label{no_vaccine}
\begin{cases}
\medskip
&\dot{S} =  S(A-S) - \beta_0 I S \\
&\dot{I} =  \beta_0IS - \left(\sigma+g\right) I 
\end{cases}
\end{equation}

\medskip

\noindent and the {\it basic reproduction number} $\mathcal{R}_0$ can be explicitly computed as:

\begin{equation}
\label{R0}
\begin{array}{lcl}
\mathcal{R}_0 =\dpt \lim_{\tau \rightarrow +\infty} \frac{1}{\tau} \int_0^{\tau}  \dfrac{A\beta_{\gamma}(t)}{\sigma + g} \, \mathrm{d}t \overset{\gamma = 0}{=}   \dfrac{A \beta_0}{\sigma + g} > 0.\\
\end{array}
\end{equation}
 
\medskip

\noindent The {\it basic reproduction number} $\mathcal{R}_0$ is  an epidemiological measure that indicates the average number of new infections caused by a single infected individual in a completely susceptible population (\cite{CarvalhoRodrigues2023,  Li2011}).
\medbreak
If $A$, $\beta_0$, $\sigma$, and $g$  are such that   $\mathcal{R}_0< 1$, the dynamics of \eqref{no_vaccine} is quite simple: all solutions with initial condition $S_0>0$ converge to $(S, I)=(A,0)$, as proved in  \cite{CarvalhoRodrigues2023}. Otherwise, if $\mathcal{R}_0>1$,  then all solutions with initial condition $S_0, I_0>0$  converge to the endemic equilibrium 

\begin{equation}
\label{endemic_p=0}
(S, I)=\left(\frac{\sigma+g}{\beta_0},\frac{A\beta_0-(\sigma+g)}{\beta_0^2}\right)=\left(\frac{A}{\mathcal{R}_0}, \frac{A}{\beta_0}\left(1-\frac{1}{\mathcal{R}_0}\right)\right).
\end{equation}

\medskip

From now on, we   analyze the model with pulse vaccination and no seasonality, {\it i.e.} $p \neq 0$ and $\gamma=0$.  

\subsection{Pulse vaccination and no seasonality}

 In the absence of seasonality ($\gamma = 0$), the model (\ref{modelo2}) can be recast into the form
\begin{equation}
\label{modelo3}
\begin{array}{lcl}
\dot{x} = f_0(x) \quad \Leftrightarrow \quad
\begin{cases}
\medskip
&\dot{S} = S(A-S) - \beta_0 I S  \\
\bigskip
&\dot{I} = \beta_0 IS - \left(\sigma+g\right) I  , \qquad \qquad \qquad  t \neq nT  \\
\medskip
&S(nT) = (1-p) S(nT^-)  \\
\medskip
&I(nT) = I(nT^-)  \qquad \qquad \qquad \qquad 
\end{cases}
\end{array}
\end{equation}
\noindent whose flow is given by
\begin{equation}
\label{flow2D}
\varphi_0 \left( t, (S_0, I_0) \right)=[S( \left( t, (S_0, I_0) \right), I  \left( t, (S_0, I_0) \right)] \,, \quad t \in \RR_0^+ \,, \quad (S_0, I_0) \in \left( \RR_0^+ \right)^2 .\\
\end{equation}

\smallskip

We denote by $S_c$ the {\it epidemic critical threshold} associated to \eqref{modelo3}: 

\begin{equation}
\label{S_c(def)}
S_c= \frac{\sigma+g}{\beta_0} > 0 . \\ 
\end{equation} 

\medskip

\noindent The meaning of $S_c$ will be explained immediately after Lemma \ref{infected_decreases}.  Before stating the main results, we provide two definitions  adapted from \cite{TangChen2001}:

\begin{defn}
System \eqref{modelo3} is said to be:

\medskip

\begin{enumerate}
\item  {\it uniformly persistent} \,if there are  constants $c_1, c_2,T_0>0$ such that for all solutions $\left( S(t), I(t) \right)$ with initial conditions $S_0 > 0$ and $I_0 > 0$, we have
$$c_1 \leq S(t) \quad \text{and} \quad c_2 \leq I(t),$$ for all $t \geq T_0$;

\medskip

\item  {\it permanent} \,if it is uniformly persistent and bounded, that is, there are   constants $c_1, c_2$, $C_1, C_2,T_0>0$ such that for all solutions $\left( S(t),I(t) \right)$ with initial conditions
$S_0 > 0$ and $I_0 > 0$, we have
$$ c_1 \leq S(t) \leq C_1 \quad \text{and} \quad c_2 \leq I(t) \leq C_2, $$ for all $t \geq T_0$.
\end{enumerate}
\end{defn}

The main result  of this article provides a complete description of the dynamics of \eqref{modelo3} through a bifurcation diagram. We also exhibit an explicit expression for the {\it basic reproduction number} $\mathcal{R}_p$ for the system with impulsive vaccination. 

\begin{maintheorem}
\label{thA}
For $\gamma=0$ and $A>S_c$, in the bifurcation diagram $(T, p)\in (\RR_0^+)^2$ associated to \eqref{modelo3}, we may define the maps $p_1,p_2: \RR_0^+ \rightarrow [0,1]$ given by

\smallskip

$$
p_1(T)=1 -e^{-AT} \quad \text{and}\quad p_2(T)=1 -e^{-(A-S_c)T}
$$  
 such that:\\

\begin{enumerate}
\item if $p=1$, then the $\omega$-limit of all solutions of \eqref{modelo3} is the disease-free   periodic solution associated to $(S,I)=(0,0)$;\\
\item if $p\in (p_1(T), 1)$, then the $\omega$-limit of  all  solutions of \eqref{modelo3} is the disease-free  periodic solution associated to $(S,I)=(0,0)$;\\
\item if $p\in (p_2(T), p_1(T))$, then the $\omega$-limit of  all  solutions of \eqref{modelo3} with initial condition  $S_0>0$ is a disease-free  (non-trivial) periodic solution $(\mathcal{S},0)$\footnote{In Section \ref{Section_of_global_stability}, we provide an explicit expression for the periodic solution  $(\mathcal{S},0)$; all trajectories converge to $(\mathcal{S},0)$ in the   topology of pointwise convergence.};\\
\item if $p\in (0, p_2(T))$, then system \eqref{modelo3} is permanent and the $\omega$-limit of  all  solutions of \eqref{modelo3} with initial condition   $S_0, I_0>0$  is an endemic periodic solution $(\mathcal{S},\mathcal{I})$;\\
\item if $p=0$, then the $\omega$-limit of   all  solutions of \eqref{modelo3} with initial condition   $S_0, I_0>0$ is the endemic equilibrium defined in \eqref{endemic_p=0};\\
\item The curves $p_1$ and $p_2$ correspond to saddle-node and transcritical bifurcations, respectively; 
\item The basic reproduction number associated to \eqref{modelo3} is $\dpt\mathcal{R}_p= \mathcal{R}_0 \left[ \dfrac{\ln{(1-p)}}{AT} + 1 \right]$ and $R_p>1$ if and only if $p<p_2$.\\

\end{enumerate}
\end{maintheorem}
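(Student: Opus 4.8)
The plan is to reduce everything to the stroboscopic (period-$T$ Poincar\'e) map of \eqref{modelo3} and to exploit that $\{I=0\}$ is invariant. Write $\Pi$ for the map sending the post-pulse state at time $nT$ to the post-pulse state at time $(n+1)T$; its fixed points are exactly the initial data of $T$-periodic solutions, and the $\omega$-limits in \eqref{modelo3} are read off from the forward orbits of $\Pi$. On $\{I=0\}$ the first equation is the logistic equation $\dot S=S(A-S)$ followed by the rescaling $S\mapsto(1-p)S$, so $\Pi$ restricts to the explicit one-dimensional map $h(S)=(1-p)\,\dfrac{ASe^{AT}}{A+S(e^{AT}-1)}$. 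First I would record that $h$ is smooth, increasing and concave on $\RR_0^+$, with $h(0)=0$, $h'(0)=(1-p)e^{AT}$, and a unique nonzero fixed point $u=\dfrac{A[(1-p)e^{AT}-1]}{e^{AT}-1}$ that is positive precisely when $(1-p)e^{AT}>1$, i.e. when $p<p_1(T)$. Monotone concave one-dimensional dynamics then give items (1)--(2): for $p\in(p_1,1]$ the origin attracts all of $\RR_0^+$ along $\{I=0\}$, which is the disease-free solution at $(0,0)$; for $p<p_1$ the orbit converges to $u$, the post-pulse value of the nontrivial disease-free periodic solution $(\mathcal S(t),0)$.

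The heart of the argument, and the content of item (7), is the identification of $\mathcal R_p$ with the transverse multiplier of the disease-free periodic solution. Linearizing the $I$-equation about $(\mathcal S(t),0)$ gives $\dot I=[\beta_0\mathcal S(t)-(\sigma+g)]\,I$, and since the pulse leaves $I$ fixed the Floquet multiplier over one period is $\exp\!\big(\int_0^T[\beta_0\mathcal S(t)-(\sigma+g)]\,dt\big)$. Accordingly I define $\mathcal R_p=\dfrac{\beta_0}{(\sigma+g)T}\int_0^T\mathcal S(t)\,dt$, the impulsive analogue of \eqref{R0} with the constant level $A$ replaced by the time-average of $\mathcal S$. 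The key simplification is that the integrand integrates in closed form along a logistic trajectory: the substitution $w=A+u(e^{At}-1)$ turns $\int_0^T\mathcal S\,dt$ into $\int dw/w=\ln\!\big[1+u(e^{AT}-1)/A\big]$, and inserting the fixed-point value of $u$ collapses this to $\ln[(1-p)e^{AT}]=\ln(1-p)+AT$. Dividing by $S_cT$ and using $\mathcal R_0=A/S_c$ gives $\mathcal R_p=\mathcal R_0\big[\tfrac{\ln(1-p)}{AT}+1\big]$. The stated equivalence is then algebra: $\mathcal R_p>1\iff\ln(1-p)>(S_c-A)T\iff 1-p>e^{-(A-S_c)T}\iff p<p_2(T)$, where $A>S_c$ guarantees $p_2\in(0,p_1)$.

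With $\mathcal R_p$ in hand the remaining items follow from the multipliers of $\Pi$. At $(0,0)$ they are $(1-p)e^{AT}$ and $e^{-(\sigma+g)T}<1$; at $(u,0)$ they are $h'(u)<1$ and $e^{(\sigma+g)T(\mathcal R_p-1)}$. Hence the transverse multiplier at $(u,0)$ exceeds $1$ exactly when $p<p_2$, which is where an endemic fixed point $(\mathcal S,\mathcal I)$ detaches from the disease-free branch (item (4)); when $p=0$ the pulse is inactive and $(\mathcal S,\mathcal I)$ degenerates to the autonomous endemic equilibrium \eqref{endemic_p=0}, giving item (5). For the global statements in items (3)--(5) I would combine (i) the one-dimensional control of the $S$-dynamics above, (ii) permanence of \eqref{modelo3} for $p<p_2$, obtained by bounding $S$ and $I$ eventually below away from $0$ via an average/persistence argument in the spirit of \cite{TangChen2001}, and (iii) a Lyapunov/LaSalle-type argument together with these bounds to upgrade local stability of the relevant fixed point to convergence of every positive orbit. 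Finally, item (6) is a normal-form classification: the $+1$ crossing of the $S$-multiplier at $p=p_1$, where $(u,0)$ meets $(0,0)$, is identified as a saddle-node, while the $+1$ crossing of the transverse multiplier at $p=p_2$, where the endemic and disease-free branches exchange stability, is transcritical, each checked by reducing $\Pi$ to its one-dimensional centre manifold along the critical eigendirection.

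The step I expect to be the genuine obstacle is the \emph{global} convergence in items (3)--(5): local (Floquet) stability is immediate from the multipliers, but excluding other $\omega$-limits for every positive initial condition needs the permanence estimates together with a Lyapunov or trapping-region argument for the full two-dimensional impulsive map, and care on the boundary $\{I=0\}$. A secondary subtlety is the labelling in item (6): on the invariant line the collision at $p_1$ is formally transcritical, so one must be explicit that the classification counts equilibria inside the physical region $(\RR_0^+)^2$, where two equilibria merge and leave the quadrant.
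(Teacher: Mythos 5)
Your reduction to the stroboscopic map and the entire local/computational layer coincide with the paper's own proof: your $h$ is the map $F_S$ of Lemma \ref{estroboscopica_S} and your $u$ is its fixed point $x_1^{\star}$; your closed form $\int_0^T\mathcal{S}(t)\,\mathrm{d}t=\ln(1-p)+AT$, the formula $\mathcal{R}_p=\mathcal{R}_0\left[\frac{\ln(1-p)}{AT}+1\right]$ and the equivalence $\mathcal{R}_p>1\Leftrightarrow p<p_2$ are Lemmas \ref{Lema7} and \ref{e_qui_va_lence}; your multipliers are the Floquet multipliers of Proposition \ref{stable_periodic_solution} (indeed $h'(u)=\frac{1}{(1-p)e^{AT}}=\lambda_1$ and $e^{(\sigma+g)T(\mathcal{R}_p-1)}=\lambda_2$); and your caveat about counting fixed points inside $(\RR_0^+)^2$ at $p=p_1$ is precisely why the paper calls that bifurcation a \emph{degenerate} saddle-node (Section \ref{section:bif}). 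One repair is needed already in items (1)--(2): your one-dimensional analysis only governs the invariant line $\{I=0\}$, whereas the statement concerns all solutions; for $I_0>0$ you must add the comparison $\dot S\leq S(A-S)$ (with the same pulses), which forces $S(t)\to 0$ when $p>p_1$, after which $\dot I\leq\left(\beta_0 S-(\sigma+g)\right)I$ gives $I(t)\to 0$ once $S$ drops below $S_c$.

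The genuine gap is in the global items (3)--(5), exactly where you predicted, and the missing ideas are concrete. For (3), the ``Lyapunov/LaSalle-type argument'' is never exhibited, and it is not routine for a non-autonomous impulsive system; the paper instead proves $I\to 0$ when $\mathcal{R}_p<1$ by an integral estimate against $\mathcal{S}+\varepsilon$ (Lemma \ref{lema10}) and then controls $S$ through the auxiliary variable $x(t)=\ln\left(S(t)/\mathcal{S}(t)\right)$, deriving the two-sided bound $\ln\left(\frac{1-\varepsilon}{1-h_{\varepsilon}(t)}\right)\leq x(t)\leq-\ln\left(1-h(t)\right)$ and squeezing (Lemmas \ref{lema12}--\ref{lema16A}). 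For (4), your existence mechanism --- the endemic branch detaching transcritically at $p=p_2$ --- is local in the parameter and gives nothing for $p$ deep inside $(0,p_2(T))$; worse, $(\mathcal{S},\mathcal{I})$ has no explicit form and you never compute its local stability, so your step (iii), ``upgrade local stability of the relevant fixed point to global convergence'', has no local input to upgrade. The device that closes this in the paper is the Poincar\'e--Bendixson theorem for impulsive semidynamical systems \cite{Bonotto2008}: permanence (proved by the case-by-case contradiction scheme of Section \ref{section_of_permanence}, the argument you gesture at via \cite{TangChen2001}) keeps positive orbits uniformly away from the boundary, the impulsive Poincar\'e--Bendixson theorem restricts $\omega$-limits to equilibria, periodic orbits and saddle connections, and eliminating $(0,0)$ (repelling) and $(\mathcal{S},0)$ (a saddle, unreachable by permanence) yields existence of, and convergence to, an interior periodic solution in one stroke. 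Without this theorem, or a genuine substitute such as a discrete Lyapunov function for the period map (which you do not produce), items (3)--(5) remain unproved in your plan.
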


 The  scenarios {\Large \ding{172}}--{\Large \ding{176}} of Theorem \ref{thA} are represented in  the bifurcation diagram $(T,p)$ of Figures \ref{Bif_Diag_ThA} and \ref{imagem_94}. Its proof   is performed in several sections throughout the  present article  and  its location is indicated in Table \ref{table1}.

\begin{figure*}[ht!]
\includegraphics[width=0.82 \textwidth]{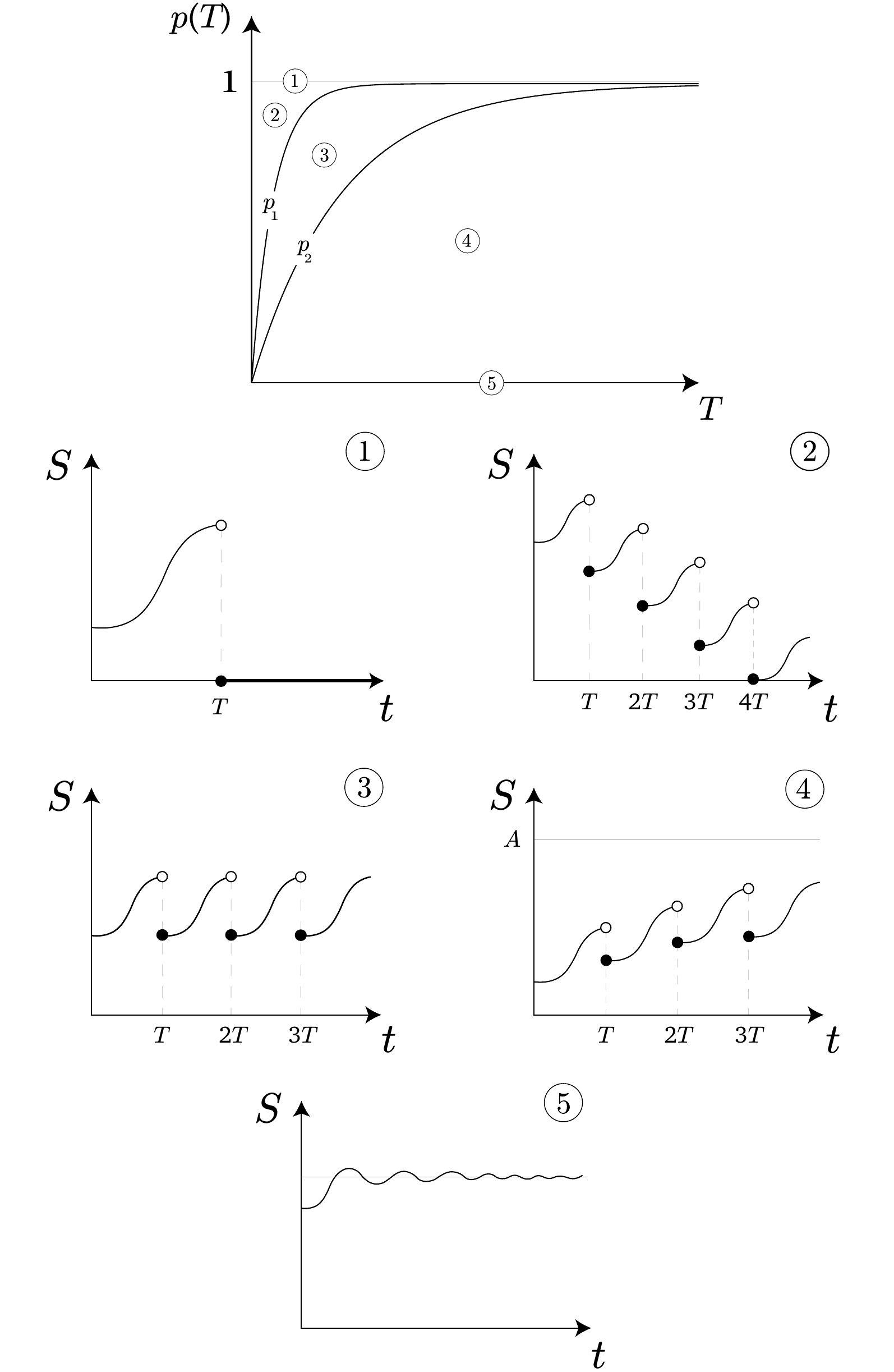}
\caption{\small Bifurcation diagram $(T,p)$ associated to \eqref{modelo3}. In {\Large \ding{172}}, all solutions converge to $(0,0)$;
in {\Large \ding{173}}, all solutions converge to $(0,0)$;
in {\Large \ding{174}},  all  solutions   with initial condition   $S_0>0$ tend towards the disease-free (non-trivial) periodic solution $(\mathcal{S},0)$;
in {\Large \ding{175}},  all  solutions   with initial condition   $S_0, I_0>0$ tend towards the endemic periodic solution $(\mathcal{S},\mathcal{I})$;
in {\Large \ding{176}},  all  solutions   with initial condition   $S_0, I_0>0$ tend towards the endemic equilibrium $(\mathcal{S},\mathcal{I})$. Compare with the numerical simulation of these five scenarios in Figure \ref{art_}.}
\label{Bif_Diag_ThA}
\end{figure*}

\begin{figure*}[ht!]
\includegraphics[width=1.08 \textwidth]{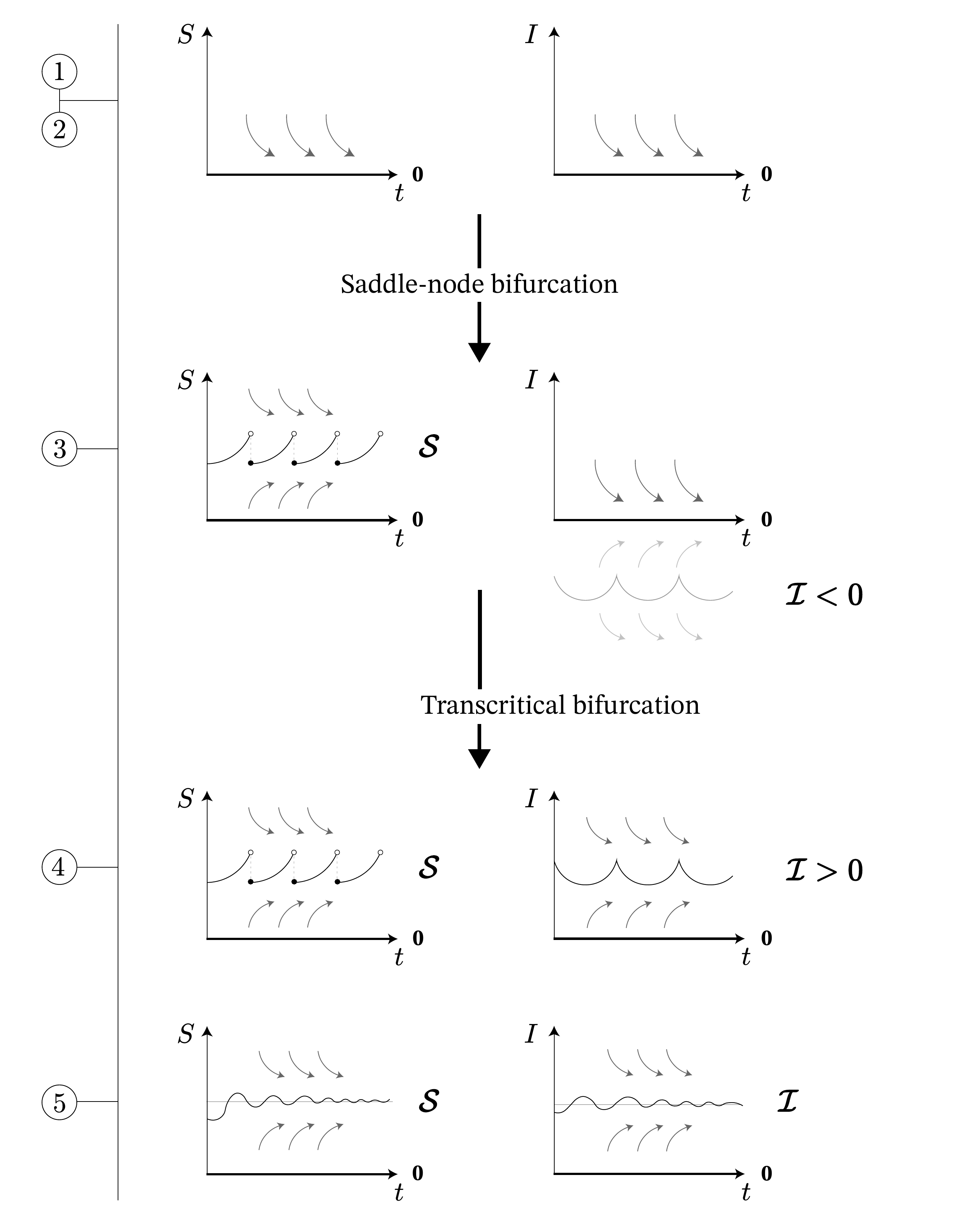}
\caption{\small  Bifurcations associated to the diagram $(T,p)$ associated to \eqref{modelo3}.
From {\Large \ding{173}} to  {\Large \ding{174}}: saddle-node associated to the solution $(0,0)$;
From {\Large \ding{174}} to  {\Large \ding{175}}: transcritical bifurcation associated to   $(\mathcal{S},0)$. For the  numbering {\Large \ding{172}} to  {\Large \ding{176}}, see Theorem \ref{thA} and Figure \ref{Bif_Diag_ThA}.}
\label{imagem_94}
\end{figure*}

\begin{table}[h!]
\small 
\centering
\renewcommand{\arraystretch}{1.4}
\begin{tabular}{ |c|l|  }
\hline 
Items of Theorem  \ref{thA}  & Reference  / Section  \\
\hline  
(1) &    Trivial   \\
(2) &    Section \ref{section: item (2)} \\
(3) &    Sections \ref{Section_of_stability} and \ref{Section_of_global_stability}  \\
(4) &    Section \ref{section_of_permanence}\\
(5) &    See reference \cite{CarvalhoRodrigues2023}   \\
(6) &     Section \ref{section:bif} \\
(7) &    Section \ref{section:Rp} \\
\hline
\end{tabular}
\medskip
\caption{\small Structure of  the proof of Theorem \ref{thA} and the location of the items throughout the present article.}
\label{table1}
\end{table}

\subsection{Pulse vaccination and seasonality}
 Seasonal variations may be captured by introducing periodically-perturbed terms into a deterministic differential equation \cite{Shulgin2000, Meng2008}. The periodically-perturbed term $\Psi(t)$ in $\beta_\gamma(t)$ may be seen as a natural periodic map over time with two global extrema (governing the high and lower seasons defined by weather conditions). 
Corollary \ref{cor1},  proved in Section \ref{proof_cor_1}, confirms the numerical results suggested by Choisy {\it et al.}~\cite{Choisy2006}: neglecting the effect of the amplitude of the seasonal transmission can lead to somewhat overoptimistic values of the optimal pulse period.
 
 \begin{cor}\label{cor1}
 For $\gamma, \omega>0$  small and $A>S_c$, the $T$-periodic solution $(\mathcal{S},0)$ is asymptotically stable if and only if $  p_2^{\text{\rm seas}}(T)<p<p_1(T)$, where 
 $$
p_1(T)=1 -e^{-AT} \quad \text{and} \quad {p_2^{\text{\rm seas}}(T)=1 - e^{- \left[(A - S_c)T + \gamma \dpt \int_0^T  \Psi (\omega t) \mathcal{S}(t) \, \mathrm{d}t \right]}}.
$$

 \end{cor}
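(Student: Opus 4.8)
The plan is to treat Corollary \ref{cor1} as a Floquet (linear-stability) computation for the stroboscopic map of \eqref{modelo2} along the disease-free periodic orbit $(\mathcal{S},0)$, exploiting the fact that this orbit is insensitive to the seasonal forcing. Indeed, on $(\mathcal{S},0)$ one has $I\equiv 0$, so the coupling term $\beta_\gamma(t)IS$ vanishes and the $S$-component solves the unforced logistic pulse equation $\dot S = S(A-S)$ with $S(nT)=(1-p)S(nT^-)$. Hence $\mathcal{S}$ is exactly the disease-free periodic solution of Theorem \ref{thA}(3), independent of $\gamma$ and $\omega$; I will use its explicit form together with the identity
\[
\int_0^T \mathcal{S}(t)\,\mathrm{d}t = AT + \ln(1-p),
\]
which follows by integrating $\tfrac{\dot{\mathcal{S}}}{\mathcal{S}} = A - \mathcal{S}$ over $[0,T]$ and using the periodicity relation $\mathcal{S}(0^+)=(1-p)\mathcal{S}(T^-)$.

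Next I linearize $f_\gamma$ along $(\mathcal{S}(t),0)$. Because $\partial_S\dot I = \beta_\gamma(t) I$ vanishes on $\{I=0\}$, the variational Jacobian is upper triangular,
\[
J(t)=\begin{pmatrix} A-2\mathcal{S}(t) & -\beta_\gamma(t)\mathcal{S}(t) \\ 0 & \beta_\gamma(t)\mathcal{S}(t)-(\sigma+g)\end{pmatrix},
\]
and the jump at each $t=nT$ linearizes to the diagonal matrix $\mathrm{diag}(1-p,\,1)$. Therefore the monodromy (derivative of the stroboscopic map over one period) is upper triangular, and its eigenvalues --- the Floquet multipliers --- are the diagonal products
\[
\lambda_S = (1-p)\exp\!\Big(\int_0^T (A-2\mathcal{S})\,\mathrm{d}t\Big), \qquad \lambda_I = \exp\!\Big(\int_0^T \big(\beta_\gamma(t)\mathcal{S}(t)-(\sigma+g)\big)\,\mathrm{d}t\Big).
\]

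I then evaluate the two multipliers using the integral identity above. For the tangential direction, $\int_0^T(A-2\mathcal{S})\,\mathrm{d}t = -AT-2\ln(1-p)$ gives $\lambda_S = e^{-AT}/(1-p)$, so $\lambda_S<1 \Leftrightarrow p<p_1(T)$; this is precisely the threshold at which $(\mathcal{S},0)$ meets the trivial solution (the saddle-node of Theorem \ref{thA}(6)). For the transverse direction, substituting $\beta_\gamma(t)=\beta_0(1+\gamma\Psi(\omega t))$, $(\sigma+g)=\beta_0 S_c$ (by \eqref{S_c(def)}) and the value of $\int_0^T\mathcal{S}$ yields
\[
\int_0^T \big(\beta_\gamma(t)\mathcal{S}(t)-(\sigma+g)\big)\,\mathrm{d}t = \beta_0\Big[(A-S_c)T + \ln(1-p) + \gamma\int_0^T\Psi(\omega t)\mathcal{S}(t)\,\mathrm{d}t\Big],
\]
so $\lambda_I<1$ is equivalent, after taking logarithms and exponentiating, to $p>p_2^{\mathrm{seas}}(T)$. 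Since both multipliers are positive reals, asymptotic stability holds precisely when both lie in $(0,1)$, i.e. on the open region $p_2^{\mathrm{seas}}(T)<p<p_1(T)$; the smallness of $\gamma$ guarantees $\gamma\int_0^T\Psi(\omega t)\mathcal{S}(t)\,\mathrm{d}t < S_cT$ and hence that this interval is nonempty (with $p_2^{\mathrm{seas}}>p_2$, reflecting the claimed overoptimism of the unforced threshold).

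The main obstacle I anticipate is the rigorous justification that stability of the impulsive periodic orbit is governed by the product of the diagonal multipliers of the triangular monodromy map --- that is, setting up the variational equation with jumps correctly and invoking the standard hyperbolicity criterion for stroboscopic (Poincar\'e) maps of impulsive systems, for which the boundary cases $\lambda_S=1$ and $\lambda_I=1$ are exactly the bifurcation curves $p_1$ and $p_2^{\mathrm{seas}}$ and must be excluded from the ``if and only if''. A secondary technical point is confirming that the explicit $\mathcal{S}$ and the integral identity carry over verbatim from the $\gamma=0$ analysis, together with tracking the role of small $\gamma,\omega$ so that the conclusion is a genuine perturbation of Theorem \ref{thA}.
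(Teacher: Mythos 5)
Your proposal is correct and follows essentially the same route as the paper: the proof in Section \ref{proof_cor_1} likewise exploits that $(\mathcal{S},0)$ is unaffected by $\beta_\gamma$ (so existence and the tangential condition $p<p_1(T)$ carry over from Lemma \ref{estroboscopica_S} and Proposition \ref{stable_periodic_solution}), and adapts the transverse Floquet multiplier $\lambda_2$ of \eqref{eigenvalue2} by replacing $\beta_0$ with $\beta_\gamma(t)$, reaching $\lambda_2<1\Leftrightarrow p>p_2^{\text{seas}}(T)$ through the same chain of equivalences built on the identity $\int_0^T \mathcal{S}(t)\,\mathrm{d}t = AT+\ln(1-p)$. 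Your only departures are cosmetic: you re-derive the tangential multiplier $\lambda_S=e^{-AT}/(1-p)$ from the triangular monodromy rather than citing Proposition \ref{stable_periodic_solution}, and you obtain the integral identity by integrating $\dot{\mathcal{S}}/\mathcal{S}=A-\mathcal{S}$ with the jump relation instead of using the explicit parametrisation of Lemma \ref{Expression1}.
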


 \begin{figure*}[ht!]
\includegraphics[width=0.65 \textwidth]{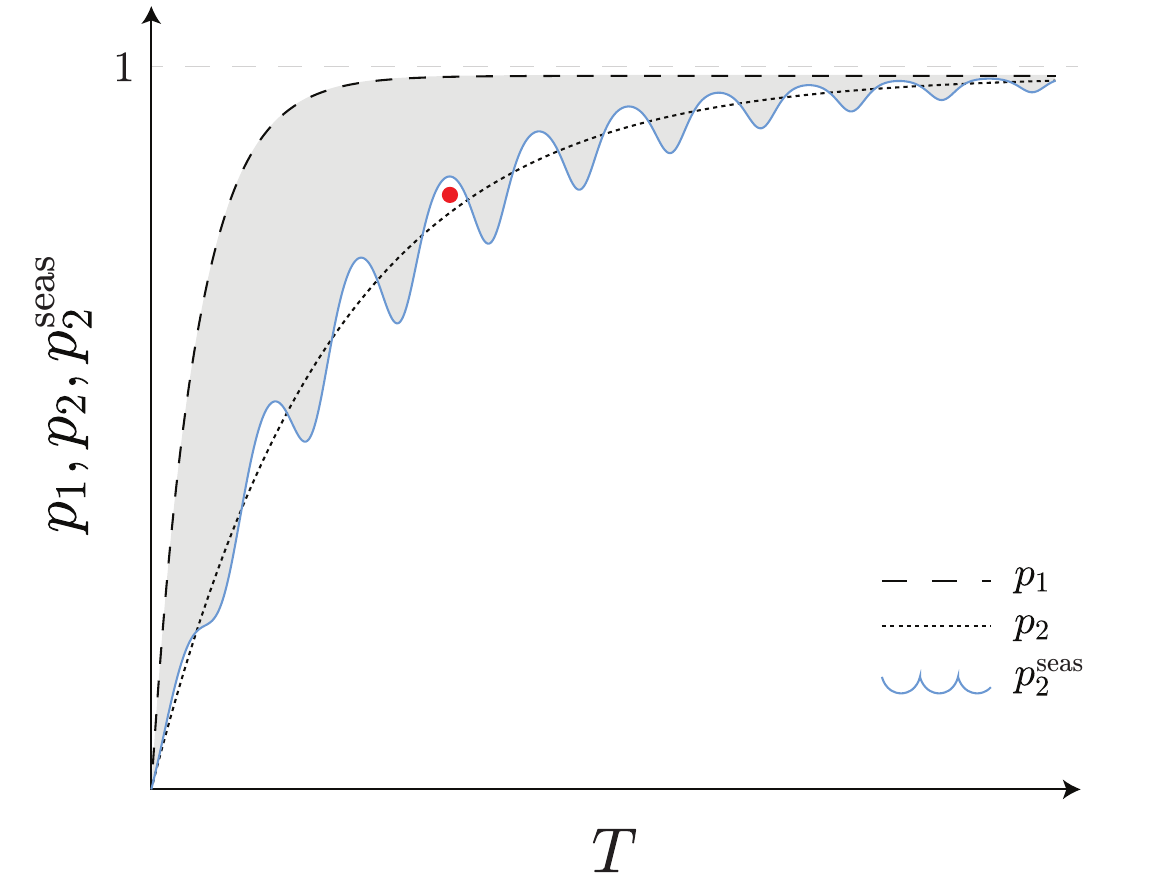}
\caption{\small Illustration of Corollary \ref{cor1}: the {\it $T$-periodic} solution $(\mathcal{S},0)$ is asymptotically stable if and only if  $p_2^{\text{seas}}(T)<p<p_1(T)$.  The red dot is a point  where $(\mathcal{S},0)$ is stable (without seasonality) and unstable (with seasonality).}
\label{p1p2SAZ}
\end{figure*} 
 
One aspect that contributes to the   complexity of \eqref{modelo2}  is the existence of chaos.
 Let $\mathcal{U}\subset \Omega$ such that  $0<p<p_2(T)$ ($\Rightarrow \mathcal{R}_p>1$). Under the condition that $T=k\tau/\omega$, $k \in \NN$,  there is an endemic {\it $T$-periodic} solution   for \eqref{modelo3} (see Item (4) of Theorem \ref{thA}).
 In what follows, we assume that $M$ is diffeomorphic to a  circloid. 
  
  \begin{defn}
  An embedding $F \colon M\rightarrow M$ is said to have a {\it horseshoe} if for some $N,n \in \NN$, the map $F^N$ has a uniformly hyperbolic invariant set $\Upsilon \subset M$ such that
$F^N|_\Upsilon$ is topologically conjugate to the full shift on $n$ symbols  $(\Sigma_n, \sigma)$ where $ \sigma$ is the usual shift operator. 
  \end{defn}
  A vector field possesses a  suspended horseshoe if the first return map to a cross-section has a horseshoe.  The existence of a horseshoe for the embedding $F$ is equivalent to the notion of topological chaos ($\Leftrightarrow$ $F$ has positive topological entropy).

\begin{maintheorem}
\label{th: mainB}
 For $0 < p < p_2^{\text{s\rm eas}}(T)$, $T=k\tau/\omega$, $k \in \NN$ and $\omega \in \RR^+$,  if $\gamma$ is sufficiently large, then the flow of ${f}_\gamma$ has a suspended topological horseshoe.
\end{maintheorem}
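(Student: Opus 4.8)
The plan is to reduce Theorem~\ref{th: mainB} to the construction of a hyperbolic horseshoe for the first-return map to a cross-section, and then to obtain that horseshoe from the strong folding induced by the seasonal burst when $\gamma$ is large. First I would exploit the resonance hypothesis $T=k\tau/\omega$: the seasonal factor $\beta_\gamma(t)=\beta_0(1+\gamma\Psi(\omega t))$ has period $\tau/\omega$ and completes exactly $k$ cycles per vaccination period, so the nonautonomous field $f_\gamma$ together with the impulsive reset at the instants $nT$ is $T$-periodic in $t$. Hence the first-return map to the cross-section $\{t\equiv 0\ (\mathrm{mod}\ T)\}$ is the stroboscopic map $P_\gamma(x)=\varphi_\gamma(T,x)$, an embedding of the circloid $M$ into itself. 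By the definition preceding the statement, it suffices to produce $N,n\in\NN$ and a uniformly hyperbolic invariant set $\Upsilon\subset M$ with $P_\gamma^{N}|_\Upsilon$ topologically conjugate to the full shift on $n$ symbols $(\Sigma_n,\sigma)$; this automatically suspends to a suspended horseshoe for the flow of $f_\gamma$.

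By hypothesis the relevant cross-section $M$ is diffeomorphic to a circloid, that is an invariant annular region, whose core is the recurrent endemic dynamics. For the $p$-range in which the unforced ($\gamma=0$) system is already endemic, $0<p<p_2(T)$, Item~(4) of Theorem~\ref{thA} identifies this core as a globally attracting endemic $T$-periodic solution $(\mathcal{S},\mathcal{I})$ --- an attracting hyperbolic limit cycle --- and for $\gamma$ large the admissible range $0<p<p_2^{\text{seas}}(T)$ broadens towards all of $(0,1)$ because $p_2^{\text{seas}}(T)\to 1$ as $\gamma\to\infty$. I would then normalise coordinates on $M$ as an angular variable along the core together with a strongly contracted transverse variable, so that at moderate forcing $P_\gamma$ is a near-contraction of the annulus with no folding.

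The heart of the argument is to show that a large amplitude $\gamma$ turns $P_\gamma|_M$ into a genuinely folded annulus map. During each high-season window $\beta_\gamma(t)$ is large, so the governing equations $\dot S=S(A-S)-\beta_\gamma(t)IS$, $\dot I=\beta_\gamma(t)IS-(\sigma+g)I$ produce a fast epidemic burst whose amplitude --- the depletion of $S$ and the spike of $I$ --- depends sensitively and nonmonotonically on the pre-burst state, the nonmonotonicity being forced by the nondegenerate critical points of $\Psi$ guaranteed by {\bf (C4)}. Tracking a thin window $R\subset M$ transverse to the core through the $k$ successive bursts and the intervening slow logistic recovery, I would show that for $\gamma$ large the image $P_\gamma(R)$ is stretched across $M$ in the expanding direction and folded back so as to cross $R$ (and suitable translates of it) transversally in at least $n\ge 2$ components. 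This yields the geometric covering relations underlying a horseshoe.

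To upgrade the covering relations to a \emph{uniformly hyperbolic} horseshoe I would build invariant stable and unstable cone fields on the windows and prove uniform expansion and contraction by differentiating the burst map: along the infective direction the linearized (variational) flow has dominant multiplier of order $\exp\int_{\text{high season}}(\beta_\gamma(t)S(t)-(\sigma+g))\,\mathrm{d}t$, which is large for $\gamma$ large and supplies uniform expansion, while the transverse direction and the vaccination factor $1-p$ supply uniform contraction. Verifying the Conley--Moser conditions (correct alignment together with the cone conditions) then yields $\Upsilon$ with $P_\gamma^{N}|_\Upsilon$ conjugate to the full shift on $n$ symbols; equivalently one may feed the folding of the previous step into the rank-one (kicked limit-cycle) machinery on the circloid $M$. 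The main obstacle is precisely this uniform control of the nonlinear burst and its derivative as $\gamma\to\infty$: the large factor $\beta_\gamma$ multiplies the product $IS$, whose rapid depletion must be tracked simultaneously, so one must certify that the fold is transverse (no tangencies) and that the hyperbolic estimates survive the impulsive resets. A singular-perturbation / geometric-desingularisation treatment of the fast infective burst, separating it from the slow logistic recovery, is the natural tool to obtain the clean, $\gamma$-uniform cone and expansion bounds.
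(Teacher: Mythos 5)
Your strategy is genuinely different from the paper's. You attempt a direct Conley--Moser construction: pass to the stroboscopic map $P_\gamma$ on the circloid $M$ (using $T=k\tau/\omega$ only to make the return map autonomous), show that the seasonal burst stretches a window $R$ across $M$ and folds it back, and certify uniform hyperbolicity with invariant cone fields. The paper instead works in the extended phase space $(S,I,\theta)\in(\RR_0^+)^2\times \mathbf{S}^1$: item (4) of Theorem \ref{thA} gives, at $\gamma=0$, a normally hyperbolic attracting two-torus $\mathcal{T}_0$ (endemic cycle crossed with the $\theta$-circle); the resonance $T=k\tau/\omega$ places the parameters in an Arnold tongue where $\mathcal{T}_0$ carries a heteroclinically connected saddle--sink pair $q_1,q_2$; and the horseshoe is then produced by the Afraimovich--Shilnikov torus-breakdown mechanisms (routes A--C of \cite{Anishchenko1993,Anishchenko2007}), with $\omega$ playing the role of the shear and $\beta_\gamma$ the radial kick in the sense of \cite{WangYoung2003}. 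In the paper the resonance hypothesis is what seeds the tangles; in your plan it is merely bookkeeping.

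The genuine gap is that the steps constituting the theorem --- the stretch-across of $R$, the transversality of the fold (absence of tangencies), and the $\gamma$-uniform cone and expansion estimates surviving the impulsive resets --- are announced rather than proved, and you yourself flag them as ``the main obstacle''. Since the paper's definition of horseshoe requires a \emph{uniformly hyperbolic} invariant set $\Upsilon$, these estimates are the entire content, not a technical afterthought. Moreover, the one quantitative mechanism you do offer for expansion is unsound: the multiplier $\exp\bigl\{\int_{\text{high season}}\bigl(\beta_\gamma(t)S(t)-(\sigma+g)\bigr)\,\mathrm{d}t\bigr\}$ does not grow with $\gamma$ in any uniform way, because the burst is self-limiting --- once $I$ spikes, $S$ is depleted on a time scale of order $1/\gamma$, the product $\beta_\gamma S$ falls below $\sigma+g$, and the integrand changes sign; large $\gamma$ accelerates the burst rather than prolonging the expansion. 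This is precisely why the kicked-limit-cycle literature derives stretching not from a raw Floquet-type integral but from the interplay of the kick (with nondegenerate critical points, \textbf{(C4)}), the shear $\omega$, and the relaxation to the attracting cycle; without an argument of that type your covering relations and cone conditions have no foundation, and the deferral to an unspecified singular-perturbation analysis does not close the proof.

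A secondary problem concerns the parameter range. For $p\in\bigl(p_2(T),\,p_2^{\text{seas}}(T)\bigr)$ the unperturbed system ($\gamma=0$) has no endemic cycle --- by Theorem \ref{thA}(3) the disease-free solution $(\mathcal{S},0)$ attracts --- so the ``attracting endemic core'' on which your annulus normalisation rests does not exist on part of the stated range; the paper's own proof quietly restricts to $0<p<p_2(T)$ at the outset. Your observation that $p_2^{\text{seas}}(T)\to 1$ as $\gamma\to\infty$ does not repair this, since your construction needs the unforced core in place before the forcing is switched on.
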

The shift dynamics obtained for (\ref{modelo2}) differ from  that of  \cite{Herrera2023}, as discussed in Section \ref{s:section}. The proof of Theorem \ref{th: mainB} is performed in Section \ref{SA_lab}.

\subsection{Biological consequences}

As suggested by Theorem \ref{thA}, the global eradication of an epidemic by means of pulse vaccination is always possible, provided the vaccination coverage is large enough.

  Based on experimental data, the World Health Organization recommends that the time between successive pulses should be as short as possible.
  For a specific vaccination coverage $p\in (0,1)$, there exists a pulse interval $(0, T_2)$ where $T_2= \dfrac{\left| \ln{(1-p)} \right|}{A-S_c}$   that ensures the effective implementation of this campaign ($\Leftrightarrow \mathcal{R}_p<1$); that is, the {\it $T$-periodic} administration of doses with $T \in (0, T_2)$ leads to the global eradication of the disease, and $T_2$ is the optimal time that determines the fastest eradication. For a specific vaccination coverage $p>0$, if $T>T_2$ then $\mathcal{R}_p>1$.
  
Adding seasonality to our model, the {\it epidemic critical threshold} $S_c$ also depends on $\beta_{\gamma}$. Corollary \ref{cor1} stresses that neglecting the effect of the amplitude of the seasonal transmission can lead to  overoptimistic   values of the optimal pulse period $T_2$.  It also stresses that the best moment for vaccination corresponds to the local minimizers of $\beta_\gamma$.  Theorem \ref{th: mainB} says that the number of {\it Susceptible} individuals and {\it Infectious} in the presence of seasonal variations could be unpredictable.  The distant future is practically inaccessible and may only be described in average, in probabilistic and ergodic terms.

\section{Preparatory section} \label{PREP_SECT}
In this section, we collect  lemmas needed for the proof of Theorems ~\ref{thA}, \ref{th: mainB} and Corollary \ref{cor1}. We start by proving the existence of a compact set where the dynamics lies.

\begin{lem} \label{region_}
The region defined by
$$
\mathcal{M} = \left\{ (S,I) \in  (\mathbb{R}_0^+)^2: \quad  0 \leq S \leq A, \quad 0 \leq S+I \leq \dfrac{A (\sigma + g + A)}{\sigma + g}, \quad S, I\geq 0 \right\}
$$
is positively flow-invariant for  \eqref{modelo3}.
\end{lem}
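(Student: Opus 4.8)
Set $K := \dfrac{A(\sigma+g+A)}{\sigma+g}$, so that $\mathcal{M}$ is the compact region of the first quadrant bounded by the two coordinate axes, the vertical edge $\{S=A\}$, and the slanted edge $\{S+I=K\}$. For an impulsive system, positive flow-invariance of a closed set splits into two independent checks: (a) between consecutive impulses the (smooth) vector field $f_0$ must be subtangential to $\mathcal{M}$, i.e.\ it must point inward or be tangent along $\partial\mathcal{M}$; and (b) the jump map $(S,I)\mapsto((1-p)S,I)$ must send $\mathcal{M}$ into itself. I would establish (a) by inspecting $f_0$ on each of the four edges, and then dispatch (b) by a direct estimate.

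Two of the edges are immediate. On the $I$-axis one has $\dot S = S\,[(A-S)-\beta_0 I]=0$ because $S$ divides $\dot S$; likewise $I$ divides $\dot I$, so $\dot I=0$ on the $S$-axis. Hence both coordinate axes are invariant and no trajectory issuing from $(\RR_0^+)^2$ can leave the first quadrant. On the edge $\{S=A\}$ the outward normal is $(1,0)$ and $\dot S = A(A-A)-\beta_0 I A = -A\beta_0 I \le 0$, so the field points (weakly) inward there.

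The only substantive computation is on the slanted edge $\{S+I=K\}$, and this is where the precise value of $K$ is used; I expect it to be the main obstacle. Adding the two equations gives $\tfrac{d}{dt}(S+I)=S(A-S)-(\sigma+g)I$, and substituting $I=K-S$ together with $(\sigma+g)K=A(A+\sigma+g)$ reduces the sign question to showing $h(S):=S^2-(A+\sigma+g)S+A(A+\sigma+g)\ge 0$ on $[0,A]$, since $\tfrac{d}{dt}(S+I)=-h(S)$ there. I would check $h(0)=A(A+\sigma+g)>0$ and $h(A)=A^2>0$, and then locate the vertex $S^\ast=\tfrac{A+\sigma+g}{2}$: if $\sigma+g<3A$ the discriminant $(A+\sigma+g)(\sigma+g-3A)$ is negative, so $h>0$ everywhere; if $\sigma+g\ge 3A$ then $S^\ast\ge 2A>A$, so $h$ is decreasing on $[0,A]$ and its minimum there is $h(A)=A^2>0$. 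In either case $h>0$ on $[0,A]$, whence $\tfrac{d}{dt}(S+I)<0$ and the field points strictly inward along the slanted edge.

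Finally, for the jump map at $t=nT$: from $1-p\in[0,1]$ and $S^-\le A$ we get $0\le (1-p)S^-\le S^-\le A$, and since $I$ is unchanged, $(1-p)S^-+I^-\le S^-+I^-\le K$. Thus the impulse carries $\mathcal{M}$ into itself. Combining (a) and (b), any impulsive trajectory starting in $\mathcal{M}$ remains in $\mathcal{M}$ for all $t\ge 0$, which proves that $\mathcal{M}$ is positively flow-invariant for \eqref{modelo3}.
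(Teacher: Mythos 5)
Your proof is correct, and it takes a genuinely different route from the paper's. The paper works with the scalar quantity $\nu=S+I$: it computes $\dot\nu+(\sigma+g)\nu\leq(\sigma+g+A)S$, invokes the a priori bound $S\leq A$ (hypothesis \textbf{(C2)}, justified informally via the logistic structure of the $S$-equation), and then applies the differential form of Gronwall's inequality to get $\nu(t)\leq \nu_0 e^{-(\sigma+g)t}+K\bigl(1-e^{-(\sigma+g)t}\bigr)$ with $K=\tfrac{A(\sigma+g+A)}{\sigma+g}$. You instead run a Nagumo-type subtangentiality check on each edge of $\partial\mathcal{M}$, reducing the only nontrivial edge $\{S+I=K\}$ to the positivity of the quadratic $h(S)=S^2-(A+\sigma+g)S+A(A+\sigma+g)$ on $[0,A]$, which your two-case discriminant/vertex analysis settles correctly. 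Your version has two concrete advantages: it \emph{proves} that $S\leq A$ is propagated (via the edge $\{S=A\}$, where $\dot S=-A\beta_0 I\leq 0$) instead of importing it from \textbf{(C2)}, and it explicitly verifies that the jump map $(S,I)\mapsto((1-p)S,I)$ preserves $\mathcal{M}$ --- a step the paper leaves implicit. It is also, strictly speaking, a cleaner invariance proof: the paper's final step lets $t\to+\infty$, which as written yields only the asymptotic bound $\limsup_{t\to\infty}\nu(t)\leq K$, whereas invariance follows from the Gronwall estimate only after noting that $\nu_0\leq K$ forces $\nu(t)\leq K$ for all $t\geq 0$. What the paper's route buys in exchange is absorption: the same Gronwall estimate shows that \emph{every} trajectory in $(\RR_0^+)^2$, not just those starting in $\mathcal{M}$, eventually satisfies the bound $S+I\leq K$ in the limit, and this stronger conclusion is what later supplies the uniform upper constants $C_1=C_2=K$ in the permanence argument of Section \ref{section_of_permanence}; your edge-wise argument, being purely local to $\partial\mathcal{M}$, does not give that for free.
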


\begin{proof}
We can easily to check that $(\mathbb{R}_0^+)^2$ is {\it flow-invariant}. Now, we show that if $(S_0, I_0)\in \mathcal{M}$, then $\varphi_0 \left( t, (S_0, I_0) \right)$, $t \in \mathbb{R}_0^+$, is contained in $ \mathcal{M}$. Let us define
$$\nu(t) = S(t) + I(t)\geq 0$$
associated to the trajectory $\varphi_0 \left( t, (S_0, I_0) \right)$. Omitting the variables' dependence on $t$, one knows that

\begin{equation}
\label{proof1}
\begin{array}{lcl}
\dot{\nu} & = & \dot{S} + \dot{I} \\
\\
& = & S(A-S) - \beta_0 I S + \beta_0 I S - (\sigma + g) I
\nonumber \\
\\
& = & S(A-S) - (\sigma + g)I ,\\
\end{array}
\end{equation}
from which we deduce that

\begin{equation}
\label{proof2}
\begin{array}{lcl}
\dot{\nu} + (\sigma + g) \nu & = & S(A-S) - (\sigma + g)I + (\sigma + g)S + (\sigma + g)I
\nonumber \\
\\
& = & S(A-S) + (\sigma + g)S  \\ \\
& \leq  &   (\sigma + g + A) S.
\end{array}
\end{equation}
\bigbreak

If $\beta_0 = p = 0$, the the first component of equation \eqref{modelo3} would represent logistic growth. Thus, its solution would have an upper bound of $A$ (by {\bf (C2)}). This property is also verified for $\beta_0, p \in \RR^+$, even if $p$ is applied periodically  (note that $p \in [0,1]$). In particular, we have
\begin{equation*}
\label{proof3}
\begin{array}{lcl}
\dot{\nu} + (\sigma + g) \nu \leq (\sigma + g + A) A.
\end{array}
\end{equation*}

\medskip

The classical differential version of the Gronwall's inequality\footnote{If $a,b \in \RR$ and $u: \RR_0^+\rightarrow \RR_0^+$ is a $C^1$ map  such that  $u'\leq au+b$, then $u(t)\leq u(0) e^{at}+\frac{b}{a}(e^{at}-1)$.} says that for all $t \in \RR^+_0$, we have
$$
\nu(t)\leq \nu_0 e^{-(\sigma+g) t} - \frac{(\sigma + g + A) A}{(\sigma+g)}\left(e^{-(\sigma+g) t}-1\right),
$$
where $\nu(0) \coloneqq \nu_0=S(0)+I(0)\geq 0$. Taking the limit when $t\rightarrow +\infty$, we get:
\begin{eqnarray*}
0\leq \lim_{t\rightarrow +\infty} \nu(t)&\leq& \lim_{t\rightarrow +\infty}  \left[\nu_0 e^{-(\sigma+g) t} - \frac{(\sigma + g + A) A}{(\sigma+g)}\left(e^{-(\sigma+g) t}-1\right) \right]\\
&=&   \frac{(\sigma + g + A) A}{(\sigma+g)}. 
\end{eqnarray*}
Since $\dpt \lim_{t\rightarrow +\infty} \nu(t)= \dpt \lim_{t\rightarrow +\infty}\left( S(t) + I(t)\right)$, the result is proved. 
\end{proof}

\bigbreak 

\begin{lem} \label{infected_decreases}
Let $D$ be an open subinterval of $\RR^+_0$.  With respect to \eqref{modelo3}, the following assertions hold:
 
\smallskip
 
\begin{enumerate}
\item  $S(t) < S_c $ for all $t\in D$ if and only if  $I$ is decreasing in $D$;
\medskip
\item $S(t) = S_c $ for all $t\in D$ if and only if  $  I$ is constant;
\medskip
\item   $S(t) > S_c $ for all $t\in D$  if and only if    $I$ is increasing in $D$.
\end{enumerate}
 
\end{lem}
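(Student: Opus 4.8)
The plan is to read all three statements off a single algebraic factorization of the $I$-equation. First I would recall from \eqref{S_c(def)} that $S_c=(\sigma+g)/\beta_0$ and rewrite the second line of \eqref{modelo3}, for $t\neq nT$, as
\begin{equation*}
\dot I=\beta_0 IS-(\sigma+g)I=\beta_0\,I\,(S-S_c).
\end{equation*}
The assertions only make sense for trajectories carrying infection, so I would restrict to initial data with $I_0>0$: the set $\{I=0\}$ is flow-invariant (the $I$-jump is the identity and $\dot I=0$ on $\{I=0\}$), hence by uniqueness $I(t)>0$ for all $t$. Since $\beta_0>0$ and $I(t)>0$, the prefactor $\beta_0 I(t)$ is strictly positive, so at every non-impulsive time,
\begin{equation*}
\operatorname{sign}\bigl(\dot I(t)\bigr)=\operatorname{sign}\bigl(S(t)-S_c\bigr).
\end{equation*}

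With this sign correspondence the forward implications are immediate. If $S(t)<S_c$ for all $t\in D$, then $\dot I(t)<0$ at every $t\in D\setminus\{nT\}$; because $I$ is continuous across the impulses $t=nT$ (its jump map is the identity) and $C^1$ in between, a strictly negative derivative on each subinterval together with continuity gives that $I$ is strictly decreasing on the whole of $D$. The cases $S\equiv S_c$ and $S>S_c$ are handled word for word and yield, respectively, $I$ constant and $I$ strictly increasing. Here it is worth noting that the only impulses present push $S$ downward, $S(nT)=(1-p)S(nT^-)\le S(nT^-)$, so they can never turn an inequality $S<S_c$ into $S>S_c$ inside $D$ and therefore do not spoil the monotone character of $I$.

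For the converse directions I would run the sign correspondence backwards: if $I$ is decreasing on $D$ then $\dot I\le 0$ wherever it is defined, whence $\beta_0 I(t)\bigl(S(t)-S_c\bigr)\le 0$ and, dividing by the positive factor, $S(t)\le S_c$ throughout $D$; and symmetrically in the constant and increasing cases. The main obstacle is the passage from this non-strict inequality to the strict one $S(t)<S_c$, i.e. excluding that $S$ merely touches the value $S_c$ at isolated instants: this is precisely the classical calculus subtlety relating the sign of a derivative to strict monotonicity, and a trajectory can in principle graze $S=S_c$ at a codimension-one instant without destroying the strict monotonicity of $I$. I would therefore resolve it by reading ``increasing/decreasing/constant'' through the (piecewise) sign of $\dot I$, the natural convention in this impulsive setting, so that the three equivalences hold exactly as stated once the factorization and the positivity $I>0$ are in place.
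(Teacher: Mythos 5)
Your proposal is correct and follows essentially the same route as the paper's proof, which likewise reduces everything to the equivalence $\dot I(t)<0 \Leftrightarrow \beta_0 S(t)I(t)-(\sigma+g)I(t)<0 \Leftrightarrow S(t)<S_c$ and reads ``decreasing/constant/increasing'' through the sign of $\dot I$ on $D$. You merely make explicit two points the paper leaves implicit --- the positivity $I(t)>0$ (flow-invariance of $\{I=0\}$) that justifies dividing by $I$, and the continuity of $I$ across the impulses $t=nT$ --- which is a welcome but not substantively different refinement.
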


\bigbreak 
\begin{proof}
We just show (1); the proof of (2) and (3) follows from the same reasoning.  We know that the {\it Infectious} is decreasing if and only if $\dot{I}(t) < 0$, for $t\in D$. Indeed,

\begin{eqnarray}
\nonumber  \dot{I}(t) < 0 &\overset{\eqref{modelo3}}{\Leftrightarrow}& \beta_0 S(t) I(t) - \left(\sigma+g\right) I(t) < 0 \\
& \Leftrightarrow&S(t) < \dfrac{\sigma + g}{\beta_0} = S_c  ,
 \label{R0Sc}
\end{eqnarray} 
and the result is proved.
\end{proof}

\smallskip

\begin{rem}
\label{S_c remark}
 The constant $S_c$ defined in \eqref{S_c(def)} will be called the {\it epidemic critical value} and is the  threshold on the number of {\it Susceptible} individuals that defines whether the epidemic spreads or not.    
When $\gamma=0$, the most intriguing dynamical scenario is when $S(t)-S_c$ takes different signs in $D\subset \RR_0^+$ (See Figure \ref{S_I}).  
 \end{rem}
  \begin{figure*}[ht!]
\includegraphics[width=0.84 \textwidth]{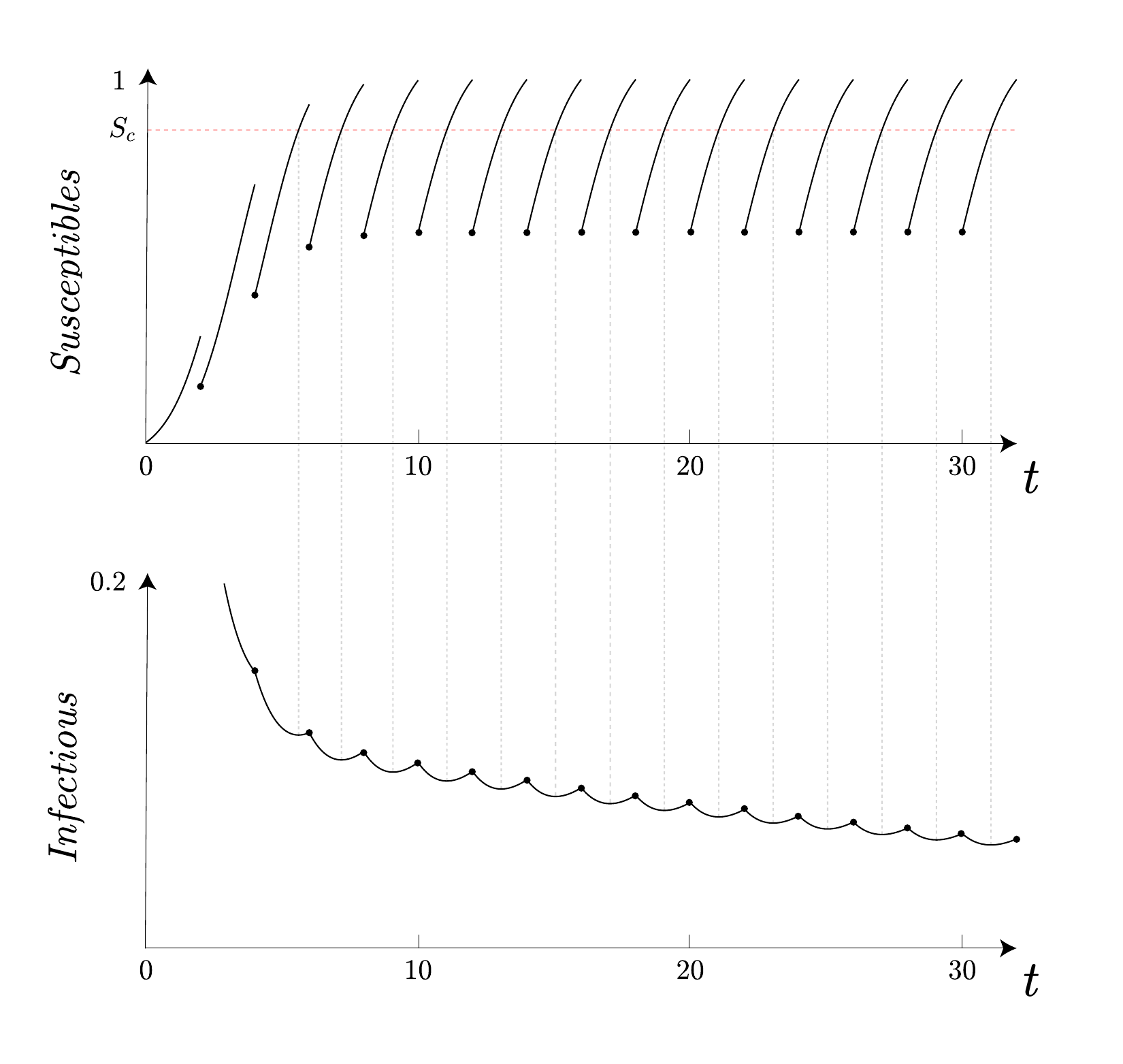}
\caption{\small Illustration of Lemma \ref{infected_decreases} for $D=\RR^+$: evolution of $S$ and $I$ as  $t\in \RR^+$ evolves.}
\label{S_I}
\end{figure*}

 \begin{lem}
 \label{A>S_c}
The following equivalence holds for system \eqref{modelo3}:
   $$A>S_c  \quad \Leftrightarrow \quad \mathcal{R}_0 > 1.$$
 \end{lem}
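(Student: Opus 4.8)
The plan is to derive the equivalence directly from the closed-form expressions for the two quantities involved, so that the statement becomes a purely algebraic reformulation rather than a dynamical one. Recall from \eqref{S_c(def)} that $S_c=\frac{\sigma+g}{\beta_0}$ and from \eqref{R0} that $\mathcal{R}_0=\frac{A\beta_0}{\sigma+g}$. The standing assumption that $S_c>0$ is well-defined forces $\beta_0>0$ and $\sigma+g>0$, and these two strict positivity facts are the only structural ingredients the argument requires.

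With these in hand, I would argue by a chain of equivalences that preserves the inequality. Starting from $A>S_c=\frac{\sigma+g}{\beta_0}$, multiply both sides by $\beta_0>0$ to obtain $A\beta_0>\sigma+g$, and then divide by $\sigma+g>0$ to reach $\frac{A\beta_0}{\sigma+g}>1$, which is precisely $\mathcal{R}_0>1$ by \eqref{R0}. Each step is reversible, since multiplying or dividing by the strictly positive quantities $\beta_0$ and $\sigma+g$ does not reverse the sense of the inequality; hence the implication holds in both directions and the claimed equivalence follows at once.

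The only point deserving care—and the nearest thing to an obstacle—is the bookkeeping of sign conditions: the equivalence would break down if either $\beta_0$ or $\sigma+g$ could vanish or be negative. I would therefore record explicitly at the outset that the standing hypotheses \textbf{(C1)}, together with the well-posedness of $S_c$, guarantee $\beta_0>0$ and $\sigma+g>0$. No estimate, limiting argument, or appeal to the flow \eqref{flow2D} is needed; the lemma is an elementary rewriting of the defining formulas that links the geometric threshold $A>S_c$ (the carrying capacity exceeding the epidemic critical value) to the epidemiological threshold $\mathcal{R}_0>1$.
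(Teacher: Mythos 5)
Your proof is correct and coincides with the paper's own argument, which likewise chains the equivalence $A>S_c \Leftrightarrow A>\tfrac{\sigma+g}{\beta_0} \Leftrightarrow \mathcal{R}_0>1$ directly from the definitions \eqref{S_c(def)} and \eqref{R0}. Your explicit bookkeeping of the sign conditions $\beta_0>0$ and $\sigma+g>0$ is a small point of additional care that the paper leaves implicit, but the route is the same.
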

 
 \begin{proof}
 The proof follows from:
  $$A > S_c  \qquad \overset{\eqref{S_c(def)}}{\Leftrightarrow} \qquad A > \dfrac{\sigma + g}{\beta_0} 
   \qquad  \overset{\eqref{R0}}{\Leftrightarrow} \qquad \mathcal{R}_0 > 1.$$
 \end{proof}

\section{Stroboscopic maps and their fixed points}

In this section, we study the existence of fixed points associated with the stroboscopic maps (time $T$ maps) of the {\it Susceptible} and {\it Infectious}  individuals for system \eqref{modelo3}. We also analyze their stability in the sense of Subsection \ref{ss: definitions}.\\

For $n\in \NN$ and $T>0$, using \eqref{flow2D}, define the sequences:
$$
S_n= S\left( nT, (S_0, I_0) \right) \quad \text{and} \quad I_n= I \left( nT, (S_0, I_0) \right) ,
$$
which can be seen as the stroboscopic $T$-maps associated to the trajectory of $(S_0, I_0)\in (\RR_0^+)^2$.
 \bigbreak
\begin{lem}\label{ponto_fixo_I}  \, 
\begin{enumerate}
\item There exists $F_I:\RR_0^+ \rightarrow \RR^+_0$ such that $I_{n+1}=F_I(I_n)$ and 

$$F_I(y)= y \exp{\left\{\beta_0 \displaystyle \int_{nT}^{(n+1)T} S(t)\, \mathrm{d}t \right\}} e^{-\left( \sigma+g \right) T}.$$ 
\item The fixed point of $F_I$ is:

\smallskip

\begin{enumerate}
\item  $y^{\star}=0$ if \,$\dpt  \dfrac{1}{T} \int_0^T  \mathcal{S}(t) \, \mathrm{d}t \neq S_c$;
\medskip
\item (any) $y^{\star} \in \RR_0^+$ if \,$\dpt  \dfrac{1}{T} \int_0^T  \mathcal{S}(t) \, \mathrm{d}t = S_c$.
\end{enumerate}
\end{enumerate}
\end{lem}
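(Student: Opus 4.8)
The plan is to solve the $I$-equation of \eqref{modelo3} explicitly on each impulse-free interval and then read the fixed-point condition directly off the resulting linear multiplier.

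First I would fix $n\in\NN$ and work on the half-open interval $[nT,(n+1)T)$, on which no jump occurs (the impulses sit at the endpoints $nT$ and $(n+1)T$). Along a trajectory $\varphi_0(t,(S_0,I_0))$ the second equation of \eqref{modelo3} reads $\dot I = I\,[\beta_0 S(t)-(\sigma+g)]$, which is a linear scalar ODE in $I$ with the time-dependent coefficient $\beta_0 S(t)-(\sigma+g)$. Separating variables and integrating from $nT$ to $(n+1)T^-$ gives
\begin{equation*}
I\big((n+1)T^-\big) = I(nT^+)\,\exp\!\left\{\int_{nT}^{(n+1)T}\!\big[\beta_0 S(t)-(\sigma+g)\big]\,\mathrm{d}t\right\} = I(nT^+)\,\exp\!\left\{\beta_0\!\int_{nT}^{(n+1)T}\! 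S(t)\,\mathrm{d}t\right\}e^{-(\sigma+g)T}.
\end{equation*}
The jump condition $I(nT)=I(nT^-)$ in \eqref{modelo3} says that $I$ is continuous across every impulse, so $I(nT^+)=I(nT^-)=I_n$ and $I((n+1)T^-)=I_{n+1}$. Substituting these identities yields $I_{n+1}=F_I(I_n)$ with the asserted expression for $F_I$, proving item (1).

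For item (2) I would observe that $F_I$ has the form $F_I(y)=\lambda\,y$, where $\lambda := \exp\{\beta_0\int_{nT}^{(n+1)T}S(t)\,\mathrm{d}t\}\,e^{-(\sigma+g)T}>0$ is a strictly positive multiplier. A fixed point satisfies $(\lambda-1)\,y^{\star}=0$, so either $\lambda\neq 1$, forcing $y^{\star}=0$, or $\lambda=1$, in which case every $y^{\star}\in\RR_0^+$ is fixed. It then remains to convert the equality $\lambda=1$ into the stated integral condition: taking logarithms, $\lambda=1 \Leftrightarrow \beta_0\int_{nT}^{(n+1)T}S(t)\,\mathrm{d}t=(\sigma+g)T$, which by \eqref{S_c(def)} is exactly $\tfrac1T\int_{nT}^{(n+1)T}S(t)\,\mathrm{d}t=S_c$. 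On a fixed point of the stroboscopic map, i.e. on the associated $T$-periodic solution where $S$ traces the periodic profile $\mathcal S$, periodicity lets me translate the window back to $[0,T]$, giving $\tfrac1T\int_0^T\mathcal S(t)\,\mathrm{d}t = S_c$. This produces the dichotomy (a)--(b).

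The main subtlety to handle carefully is that the multiplier $\lambda$ depends on the entire trajectory of $S$ over $[nT,(n+1)T]$, hence a priori on $(S_0,I_0)$ and on $n$, so $F_I$ is not literally a function of $I_n$ alone until $S$ is pinned down; this is why the clean dichotomy in item (2) belongs to the \emph{fixed points} of the stroboscopic dynamics rather than to an abstract one-variable map. The resolution is precisely that a fixed point of the stroboscopic map corresponds to a $T$-periodic solution, along which $S\equiv\mathcal S$ is a fixed periodic profile and $\lambda$ becomes a genuine constant determined by $\int_0^T\mathcal S$. I would therefore state and use item (2) in that periodic setting, where the well-definedness of $\lambda$ is automatic, and invoke Lemma \ref{infected_decreases} only informally to interpret the threshold $S_c$.
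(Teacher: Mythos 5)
Your proposal is correct and follows essentially the same route as the paper: integrate the linear scalar equation for $I$ on each impulse-free interval, use continuity of $I$ across the pulses to obtain $I_{n+1}=F_I(I_n)$, and then read the fixed-point dichotomy off the multiplicative form $F_I(y)=\lambda y$, translating $\lambda=1$ into $\tfrac1T\int_0^T\mathcal S(t)\,\mathrm{d}t=S_c$ via \eqref{S_c(def)}. Your closing remark -- that $\lambda$ depends on the $S$-trajectory, so the dichotomy in item (2) is properly a statement about fixed points of the stroboscopic dynamics along the periodic profile $\mathcal S$ -- is a point the paper leaves implicit (its statement switches from $S$ in item (1) to $\mathcal S$ in item (2) without comment), and your handling of it is a genuine improvement in precision.
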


\begin{proof}
\begin{enumerate}
\item   From \eqref{modelo3}, for $I(t)>0$ for all $t\in \RR_0^+$, we have
 
\begin{eqnarray*}
\nonumber && \dot{I}(t) = \beta_0 S(t) I(t)  - (\sigma+g) I(t) \\
\nonumber && \\
\nonumber \Leftrightarrow&&  \dfrac{\mathrm{d}I(t)}{I(t)}  = \left[ \beta_0 S(t) - (m+g) \right] \mathrm{d}t \\
\nonumber && \\
\nonumber \Rightarrow&& \displaystyle \int_{t_0}^t \dfrac{\mathrm{d}I(\tau)}{I(\tau)} = \beta_0 \displaystyle \int_{t_0}^t S(\tau) \mathrm{d}\tau  - (m+g) \displaystyle \int_{t_0}^t \, \mathrm{d}\tau  \\
 \nonumber && \\
\nonumber \Leftrightarrow&& \ln{I(t)} - \ln{I(t_0)} = \beta_0  \displaystyle \int_{t_0}^t S(\tau) \, \mathrm{d}\tau - (m+g)(t-t_0) \\
\nonumber && \\
\Leftrightarrow&& I(t) = I_0 \exp{\left\{\beta_0 \displaystyle \int_{t_0}^t S(\tau)\, \mathrm{d}\tau \right\}} e^{-\left(\sigma+g\right)\left(t-t_0\right)},
\label{S_tau} 
\end{eqnarray*}
where $I(t_0) \coloneqq I_0> 0$. The sequence $I_{n+1} = I \left( (n+1)T, (S_0, I_0)\right)$ is then computed as
\begin{eqnarray}
\nonumber &&  I_{n+1} \coloneqq I_n \exp{\left\{\beta_0 \displaystyle \int_{nT}^{(n+1)T} S(t)\, \mathrm{d}t \right\}} e^{-\left(\sigma+g\right)T} \\
\nonumber && \\
\nonumber \Leftrightarrow && I_{n+1}=F_I(I_n)  ,
 \end{eqnarray}
 
where $F_I(y)= y \exp{\left\{\beta_0 \displaystyle \int_{nT}^{(n+1)T} S(t)\, \mathrm{d}t \right\}} e^{-\left( \sigma+g \right) T}$. 

\medskip

 \item The fixed points of $F_I$ can be found by solving the equation $ F_I(y^{\star}) = y^{\star}$
which is equivalent to
\begin{eqnarray*}
\nonumber &&  y^{\star}\exp{\left\{\beta_0 \displaystyle \int_{nT}^{(n+1)T} S(t)\, \mathrm{d}t \right\}} e^{-\left(\sigma+g\right)T} = y^{\star} \\
&\Leftrightarrow & y^{\star}\exp \left( \dpt  \dfrac{1}{T} \int_0^T  S(t) \, \mathrm{d}t - S_c\right)= y^{\star}.
\end{eqnarray*}

\medskip

In other words, if \,$\dpt  \dfrac{1}{T} \int_0^T  S(t) \, \mathrm{d}t \neq S_c $, then the fixed point of $F_I$ is given by $y^{\star}=0$. Otherwise any $y^{\star} \in \RR_0^+$ is a solution of $F_I(y^{\star})=y^{\star}$.
\end{enumerate}
 \end{proof}
 \medbreak
 
  If \,$\dpt \dfrac{1}{T} \int_0^T  S(t) \, \mathrm{d}t \neq S_c $, then the unique fixed point of $F_I$ is $y^{\star}=0$.  From now on, we focus the analysis  on this fixed point.  We remind the definition of $p_1$ from the statement of Theorem \ref{thA}: $p_1\equiv p_1(T)=1-e^{-AT}$, $T> 0$.

\medbreak
\begin{lem} \label{estroboscopica_S}
If $I=0$, then:\\

\begin{enumerate}
\item There exists $F_S:\RR_0^+ \rightarrow \RR_0^+$ such that $S_{n+1}=F_S(S_n)$ and 

$$
F_S(x)=  \displaystyle \dfrac{A\,x \left(1-p \right)e^{AT}}{x \left(e^{AT} - 1\right)+A}.
$$

\medskip

\item The map $F_S(x)$ has two fixed points:

$$x_1^{\star} = A\left(1- \dfrac{pe^{AT}}{e^{AT} - 1} \right) \quad \text{and} \quad x_2^{\star} = 0.$$

\smallskip

For $T>0$, if $p > p_1(T)$, then $x_1^{\star}<0$.
\end{enumerate}
\end{lem}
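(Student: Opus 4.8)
The plan is to exploit the fact that the set $\{I=0\}$ is invariant for \eqref{modelo3}. Since $\dot I = \beta_0 I S - (\sigma+g)I$ vanishes identically when $I\equiv 0$, a trajectory starting with $I_0=0$ keeps $I(t)=0$ for all $t\geq 0$, and the impulse $I(nT)=I(nT^-)$ leaves $I$ unchanged. On this invariant set the $S$-equation decouples completely and reduces, between consecutive pulses, to the pure logistic equation $\dot S = S(A-S)$, with the multiplicative reset $S(nT)=(1-p)S(nT^-)$ at each impulse time.

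First I would solve the logistic initial value problem on a single inter-pulse interval $[nT,(n+1)T)$ with $S(nT)=S_n$. Using the standard closed form for $\dot S = AS(1-S/A)$ (e.g.\ via the substitution $u=1/S$, which linearizes the equation to $\dot u + Au = 1$), one obtains
\begin{equation*}
S(t) = \frac{A\,S_n\,e^{A(t-nT)}}{A + S_n\left(e^{A(t-nT)}-1\right)}, \qquad t\in[nT,(n+1)T).
\end{equation*}
Evaluating at the left limit $t\to (n+1)T^-$ and then applying the jump $S_{n+1}=(1-p)S((n+1)T^-)$ yields
\begin{equation*}
S_{n+1} = \frac{A\,S_n\,(1-p)\,e^{AT}}{S_n\left(e^{AT}-1\right)+A},
\end{equation*}
which is precisely $S_{n+1}=F_S(S_n)$ with the claimed $F_S$. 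I would stress that the recursion is autonomous (independent of $n$) exactly because the logistic flow is time-homogeneous and the reset factor $1-p$ is constant; this establishes item (1).

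For item (2) I would solve $F_S(x^\star)=x^\star$. Clearing the denominator gives $A(1-p)e^{AT}x^\star = x^\star\big[x^\star(e^{AT}-1)+A\big]$, and factoring out $x^\star$ produces the trivial root $x_2^\star=0$ together with the linear equation $x^\star(e^{AT}-1)=A\big[(1-p)e^{AT}-1\big]$, whose solution simplifies to
\begin{equation*}
x_1^\star = \frac{A\big[(1-p)e^{AT}-1\big]}{e^{AT}-1} = A\left(1-\frac{p\,e^{AT}}{e^{AT}-1}\right).
\end{equation*}
Finally, since $A>0$ and $e^{AT}-1>0$ for $T>0$, the sign of $x_1^\star$ coincides with that of $(1-p)e^{AT}-1$; this quantity is negative exactly when $1-p<e^{-AT}$, i.e.\ when $p>1-e^{-AT}=p_1(T)$, which gives the last assertion.

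The computations are elementary, so I do not expect a serious obstacle; the only genuine point requiring care is the opening observation that $\{I=0\}$ is flow-invariant, which is what legitimately collapses the two-dimensional impulsive system to a scalar logistic map with periodic resets. Everything downstream is the closed-form logistic solution followed by routine algebra.
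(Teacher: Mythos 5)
Your proposal is correct and takes essentially the same route as the paper's proof: both solve the logistic equation $\dot S = S(A-S)$ in closed form on each inter-pulse interval (the paper by separating variables, you by the substitution $u=1/S$, yielding the identical expression), compose the time-$T$ logistic flow with the reset $S_{n+1}=(1-p)S((n+1)T^-)$ to obtain $F_S$, and then extract the fixed points and the sign condition $p>p_1(T)\Leftrightarrow x_1^\star<0$ by the same elementary algebra. Your explicit opening verification that $\{I=0\}$ is invariant under both the vector field and the impulse is a slightly cleaner justification of the reduction than the paper's appeal to the fixed point of $F_I$, but the substance of the argument is unchanged.
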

\bigbreak

\begin{proof}
\begin{enumerate}
\item We study system \eqref{modelo3} assuming $I=0$, the unique fixed point of $F_I$ if $ \dpt  \dfrac{1}{T} \int_0^T  S(t) \, \mathrm{d}t \neq S_c $ (cf. Lemma \ref{ponto_fixo_I}).
The growth of $S$  for $t_0 = nT \leq t < (n+1)T$ is given by
\begin{equation}
\label{s_wt_i}
\begin{cases}
\medskip
&\dot{S}(t) = S(t)\left(A-S(t)\right), \,\,\,\,\,\,\,  \quad \, t \neq nT, \\
&S(nT) = (1-p) S(nT^-). \,\,\, \quad \,
\end{cases}
\end{equation}

\medskip

Integrating   \eqref{s_wt_i} between pulses, and assuming that $S(t)(A-S(t))\neq 0$, we obtain
\begin{eqnarray}
\nonumber && \dot{S}(t) = S(t)\left(A-S(t)\right) \\
\nonumber && \\
\nonumber \Leftrightarrow&& \displaystyle \int_{t_0}^t \dfrac{\mathrm{d}S(\tau)}{S(\tau)\left(A-S(\tau)\right)} = \displaystyle \int_{t_0}^t \,\mathrm{d}\tau    \\
\nonumber && \\
\Leftrightarrow&& \displaystyle \int_{t_0}^t \dfrac{\mathrm{d}S(\tau)}{S(\tau)\left(A-S(\tau)\right)} = t - t_0  \label{integrar_p_Susc}
\end{eqnarray}
and thus we get:
\begin{eqnarray}
S(t) = \displaystyle \dfrac{A\,S_0}{S_0 + \left(A-S_0\right)e^{-A\left(t-t_0\right)}}, \quad  \qquad t \geq t_0 \geq 0,
\label{S_expression}
\end{eqnarray}
 where $S(t_0) \coloneqq S_0$ and $A > S_0$. Notice that \eqref{S_expression} holds between pulses.
 Bearing in mind that
\begin{eqnarray}
S(nT^-) = \displaystyle \lim_{t \rightarrow nT^-} S(t), \,\,  \quad S(nT) = (1-p)S(nT^-),\,\,  \quad n \in \mathbb{N},
\label{notions}
\end{eqnarray}
 using induction over $n\in \NN$ it is easy to show that the general expression of $S(t)$ for    $nT \leq t < (n+1)T$ is

\begin{eqnarray}
\nonumber S(t) = \displaystyle \dfrac{A\left(1-p \right) S(nT^-)}{\left(1-p \right)S(nT^-) + \left[A-\left(1-p \right)S(nT^-) \right]e^{-A\left(t-nT\right)}}.
\label{s_t_geral_0}
\end{eqnarray}
\bigbreak

Using \eqref{notions} and considering $S_n= S\left( nT, (S_0, I_0) \right) $, we get
\begin{eqnarray}
S(t) =   \displaystyle \dfrac{A\,S_n}{S_n + \left( A - S_n \right) e^{-A\left(t-nT\right)}}, \quad 
\label{s_t_geral}
\end{eqnarray}

\medskip

\noindent where $ nT \leq t < (n+1)T$. In particular,  

\begin{eqnarray}
\nonumber  &&S((n+1)T^-) =  \displaystyle \dfrac{A\left(1-p \right) S(nT^-)}{\left(1-p \right)S(nT^-) + \left[A-\left(1-p \right)S(nT^-) \right]e^{-AT}} \\
\nonumber && \\
\nonumber && \\
\nonumber \overset{\eqref{notions}}{\Leftrightarrow} && \dfrac{S((n+1)T)}{1-p} =  \displaystyle \dfrac{A\left(1-p \right) S(nT^-)}{\left(1-p \right)S(nT^-) + \left[A-\left(1-p \right)S(nT^-) \right]e^{-AT}} \\
\nonumber && \\
\nonumber && \\
\nonumber \Leftrightarrow && S_{n+1} = \displaystyle \dfrac{A\,S_n \left(1-p \right)}{S_n + \left(A-S_n \right)e^{-AT}} \\
\nonumber && \\
\nonumber && \\
 &\Leftrightarrow& S_{n+1} = \displaystyle \dfrac{A\,S_n \left(1-p \right)}{S_n \left(1-e^{-AT} \right) + Ae^{-AT}} \label{Sn+1_0}.
\end{eqnarray}

 Multiplying both the numerator and the denominator of \eqref{Sn+1_0} by $e^{AT}>0$, we get
\begin{eqnarray}
\nonumber && S_{n+1} \coloneqq \displaystyle \dfrac{A\,S_n\left(1-p \right)e^{AT}}{S_n\left(e^{AT} - 1\right)+A} \qquad  \Leftrightarrow \qquad  S_{n+1}= F_S(S_n)  ,
  \label{Fx} 
\end{eqnarray}

where
$$
F_S(x)=  \displaystyle \dfrac{A\,x \left(1-p \right)e^{AT}}{x \left(e^{AT} - 1\right)+A}.
$$

\bigbreak

\item  The map $F_S$ has two fixed points:
\begin{eqnarray}
\label{fixed_points_}
\nonumber  x_1^{\star} &=& A\left(1- \dfrac{pe^{AT}}{e^{AT} - 1} \right) \\
\nonumber  x_2^{\star} &=& 0 ,
\end{eqnarray}

\smallskip

where $x_1^{\star}>0$ if and only if

$$
A\left(1- \dfrac{pe^{AT}}{e^{AT} - 1} \right) > 0 
\quad \Leftrightarrow \quad p < 1 - e^{-AT}  \quad \Leftrightarrow \quad p < p_1(T).
$$
\end{enumerate}
 \end{proof} 
 
Combining Lemmas \ref{ponto_fixo_I} and \ref{estroboscopica_S}, we know that if \,$\dpt  \dfrac{1}{T} \int_0^T  S(t) \, \mathrm{d}t \neq S_c $, then  $(F_S, F_I)$ has two non-negative fixed points: $(0,0)$ and $(x_1^\star, 0)$. In the flow of  \eqref{modelo3}, they  are  denoted by $(0,0)$ (stationary trivial equilibrium) and $(\mathcal{S}, 0)$ (periodic non-trivial disease-free solution).
\begin{lem}
\label{Expression1}
For $T>0$, the non-trivial $T$-periodic solution of \eqref{modelo3} associated to $(x_1^\star, 0)$ does not depend on the transmission rate $\beta$ and is  parametrised by
$$
(\mathcal{S}(t),0)=  \left(\dfrac{A\left[e^{AT}\left(1-p\right)-1\right]}{e^{AT}\left(1-p\right)-1+pe^{A\left(T-(t-t_0)\right)}}, 0\right),
$$
where $t_0 = nT \leq t < (n+1)T$ and $n\in \NN$. 
\end{lem}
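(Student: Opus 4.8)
The plan is to specialise the between-pulses solution already obtained in Lemma \ref{estroboscopica_S} to the non-trivial fixed point $x_1^\star$ and then carry out the algebraic simplification. First I would observe that along the candidate solution $(\mathcal{S},0)$ one has $I\equiv 0$, so the term $\beta_0 I S$ drops out of the first equation of \eqref{modelo3}, and the $S$-component obeys the purely logistic impulsive system \eqref{s_wt_i}, in which $\beta_0$ does not appear at all. This is exactly what makes the periodic orbit independent of the transmission rate, so that part of the claim is immediate.

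Next I would invoke formula \eqref{s_t_geral}: for $nT\leq t<(n+1)T$ the solution emanating from the post-pulse value $S_n=S(nT)$ is
\[
S(t)=\frac{A\,S_n}{S_n+(A-S_n)\,e^{-A(t-nT)}}.
\]
For the periodic solution $S_n$ must equal the non-trivial fixed point of $F_S$ for every $n$, namely $x_1^\star=A\!\left(1-\frac{pe^{AT}}{e^{AT}-1}\right)$. Rewriting this as $x_1^\star=\frac{A[e^{AT}(1-p)-1]}{e^{AT}-1}$ and computing $A-x_1^\star=\frac{Ape^{AT}}{e^{AT}-1}$, I would substitute both into the displayed formula. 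Cancelling the common factor $\frac{A}{e^{AT}-1}$ from numerator and denominator, and writing $e^{AT}e^{-A(t-nT)}=e^{A(T-(t-t_0))}$ with $t_0=nT$, the expression collapses to
\[
\mathcal{S}(t)=\frac{A\bigl[e^{AT}(1-p)-1\bigr]}{e^{AT}(1-p)-1+pe^{A(T-(t-t_0))}},
\]
which is precisely the claimed parametrisation.

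Finally I would confirm that this orbit is genuinely $T$-periodic in the sense of \eqref{periodic_def}. Since $x_1^\star$ is a fixed point of the stroboscopic map $F_S$ by Lemma \ref{estroboscopica_S}, the post-pulse value returns to $x_1^\star$ at every instant $nT$; hence the orbit repeats identically on each interval $[nT,(n+1)T)$, which yields periodicity with period $T$. I do not expect any serious obstacle here: the argument is an elementary substitution followed by routine algebra. The only point that genuinely demands care is the bookkeeping between the pre-pulse value $S(nT^-)$ and the post-pulse value $S(nT)=x_1^\star$ used as the initial datum in \eqref{s_t_geral}, together with the verification that this particular fixed point closes the orbit over one full period; both are settled directly by the fixed-point property of $x_1^\star$.
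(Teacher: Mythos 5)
Your proposal is correct and follows essentially the same route as the paper: substituting the fixed point $x_1^\star$ of the stroboscopic map into the between-pulses logistic solution \eqref{S_expression}/\eqref{s_t_geral}, simplifying, and deducing $T$-periodicity from the fixed-point property established in Lemma \ref{estroboscopica_S}. Your explicit remark that $I\equiv 0$ eliminates the $\beta_0 I S$ term (hence the independence from the transmission rate) makes precise a point the paper leaves implicit, but the argument is the same.
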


\begin{proof}

Replacing $S_0$ by  $x_1^{\star}$ in \eqref{S_expression}, we obtain
\begin{eqnarray}
\nonumber \mathcal{S}(t) &=& \displaystyle \dfrac{A\, x_1^{\star}}{x_1^{\star} + \left(A-x_1^{\star}\right)e^{-A (t-t_0)}} =  \dfrac{A\left[e^{AT}\left(1-p\right)-1\right]}{e^{AT}\left(1-p\right)-1+pe^{A\left(T-(t-t_0)\right)}},\label{S_infect_free}
\end{eqnarray}
 where $t_0 = nT \leq t < (n+1)T$ and $n\in \NN$.  By construction, this solution is {\it $T$-periodic} (a consequence of Lemmas   \ref{ponto_fixo_I} and \ref{estroboscopica_S}).
\end{proof}

\section{Basic reproduction number    $\mathcal{R}_p$ for \eqref{modelo3}}
\label{section:Rp}

For $T>0$, following \cite[Eq. (3.15)]{LuChiChen2002}, we  compute  the {\it basic reproduction number} $\mathcal{R}_p$ in the absence of seasonality for   \eqref{modelo3} as
\begin{eqnarray}
\mathcal{R}_p(T):=\dpt \dfrac{\beta_0}{\sigma + g}\dfrac{1}{T} \int_0^T  \mathcal{S}(t) \, \mathrm{d}t =  \dfrac{1}{S_c T} \int_0^T  \mathcal{S}(t) \, \mathrm{d}t ,
\label{Rp_eq} 
\end{eqnarray}

\noindent based on the  disease-free periodic solution $\mathcal{S}(t)$ given explicitly in Lemma \ref{Expression1}.
The quantity $\mathcal{R}_p$ (when less than 1) can be used to measure the velocity at which the disease is eradicated. For clarity, we omit the dependence of $\mathcal{R}_p$ on $T>0$.
In the sequel, we present a series of properties of $\mathcal{R}_p$.

\begin{lem}
\label{Lema7}
With respect to  \eqref{modelo3},  the following assertions hold for $p\in [0,1)$ and $T>0$: \\
\begin{enumerate}
\item $\mathcal{R}_p =    \dpt \mathcal{R}_0 \left[ \dfrac{\ln{(1-p)}}{AT} + 1 \right]$; 
\medskip
\item if $\mathcal{R}_p=1$, then $F_I$ has infinitely many fixed points;
\medskip
\item for $T>0$ and $p>0$, we have $\frac{\partial \mathcal{R}_p}{\partial T}(T)>0$ and \,$\frac{\partial \mathcal{R}_p}{\partial p}(T)<0$;
\medskip
\item $\displaystyle \lim_{T \rightarrow +\infty} \mathcal{R}_p = \mathcal{R}_0$(\footnote{This limit on $T$ may be interpreted as the absence of vaccination.});
\medskip
\item If $p=0$, then $\mathcal{R}_p=\mathcal{R}_0$.\\
 
\end{enumerate}
\end{lem}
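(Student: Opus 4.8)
The plan is to establish the closed form in item (1) first, since items (2)--(5) all follow from it (together with Lemma \ref{ponto_fixo_I} for item (2)). The only genuine computation is the integral $\int_0^T \mathcal{S}(t)\,\mathrm{d}t$ appearing in the definition \eqref{Rp_eq}. Rather than integrate the explicit rational expression for $\mathcal{S}(t)$ of Lemma \ref{Expression1} head-on, I would exploit the logistic equation satisfied by the disease-free solution between pulses.

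On each interval $[nT,(n+1)T)$ the profile $\mathcal{S}$ solves $\dot{\mathcal{S}} = \mathcal{S}(A-\mathcal{S})$ with $\mathcal{S}>0$, so $\dot{\mathcal{S}}/\mathcal{S} = A - \mathcal{S}$. Taking $t_0 = 0$ and integrating over $[0,T)$ gives
\begin{equation*}
\int_0^T \frac{\dot{\mathcal{S}}(t)}{\mathcal{S}(t)}\,\mathrm{d}t = AT - \int_0^T \mathcal{S}(t)\,\mathrm{d}t ,
\end{equation*}
and the left-hand side telescopes to $\ln \mathcal{S}(T^-) - \ln \mathcal{S}(0)$. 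Because $(\mathcal{S},0)$ corresponds to the fixed point $x_1^\star$ of $F_S$ (Lemma \ref{estroboscopica_S}), periodicity together with the jump law $S(nT)=(1-p)S(nT^-)$ force $\mathcal{S}(0)=x_1^\star$ and $\mathcal{S}(T^-)=x_1^\star/(1-p)$, so the boundary term equals $-\ln(1-p)$. Solving for the integral yields $\int_0^T \mathcal{S}(t)\,\mathrm{d}t = AT + \ln(1-p)$. Substituting into \eqref{Rp_eq} and using $\mathcal{R}_0 = A/S_c$ from \eqref{R0} (so that $1/S_c = \mathcal{R}_0/A$) produces
\begin{equation*}
\mathcal{R}_p = \frac{1}{S_c T}\bigl(AT + \ln(1-p)\bigr) = \mathcal{R}_0\left[\frac{\ln(1-p)}{AT} + 1\right],
\end{equation*}
which is item (1).

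With item (1) in hand the remaining items are immediate. For item (2), $\mathcal{R}_p = 1$ is equivalent to $\frac{1}{T}\int_0^T \mathcal{S}(t)\,\mathrm{d}t = S_c$ by \eqref{Rp_eq}, and then Lemma \ref{ponto_fixo_I}(2)(b) gives that every $y^\star \in \RR_0^+$ is a fixed point of $F_I$. For item (3) I would differentiate the closed form: $\frac{\partial \mathcal{R}_p}{\partial T} = -\mathcal{R}_0\ln(1-p)/(AT^2)$ and $\frac{\partial \mathcal{R}_p}{\partial p} = -\mathcal{R}_0/\bigl[AT(1-p)\bigr]$; since $p>0$ forces $\ln(1-p)<0$ while $1-p>0$, the first derivative is positive and the second negative. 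Item (4) follows because $\ln(1-p)$ is constant in $T$, so $\ln(1-p)/(AT)\to 0$ and hence $\mathcal{R}_p\to\mathcal{R}_0$; item (5) follows by setting $p=0$, whence $\ln(1-p)=0$ and $\mathcal{R}_p=\mathcal{R}_0$.

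The main obstacle is the careful bookkeeping of the boundary term in the log-derivative identity: one must use that the trajectory is the \emph{periodic} disease-free orbit rather than an arbitrary one, evaluating $\mathcal{S}(0)$ and $\mathcal{S}(T^-)$ through the fixed-point identity $\mathcal{S}(0)=x_1^\star$ and the jump condition. This is precisely what collapses the boundary contribution to the clean value $-\ln(1-p)$; everything else is routine.
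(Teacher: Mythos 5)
Your proof is correct, and items (2)--(5) proceed exactly as in the paper: item (2) via the equivalence $\mathcal{R}_p=1 \Leftrightarrow \frac{1}{T}\int_0^T \mathcal{S}(t)\,\mathrm{d}t = S_c$ together with Lemma \ref{ponto_fixo_I}(2)(b), item (3) by the same two derivatives, and items (4)--(5) read off the closed form. Where you genuinely differ is the key computation of $\int_0^T \mathcal{S}(t)\,\mathrm{d}t$ in item (1): the paper integrates the explicit rational parametrisation of Lemma \ref{Expression1} head-on to obtain $\ln(1-p)+AT$, whereas you avoid the closed form entirely by integrating the logistic identity $\dot{\mathcal{S}}/\mathcal{S}=A-\mathcal{S}$ over one period and collapsing the boundary term through periodicity and the jump law, $\ln\mathcal{S}(T^-)-\ln\mathcal{S}(0)=\ln\bigl(x_1^\star/(1-p)\bigr)-\ln x_1^\star=-\ln(1-p)$. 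Your route is shorter and structurally more illuminating: it shows that the value of the integral depends only on the equation satisfied between pulses and on the jump factor $(1-p)$, not on the particular logistic solution formula, and the same mechanism would apply verbatim to the auxiliary periodic solution $\mathcal{Z}$ of Section \ref{section_of_permanence}. One bookkeeping caveat: taking logarithms of $\mathcal{S}$ requires $\mathcal{S}>0$ on $[0,T)$, i.e.\ $x_1^\star>0$, i.e.\ $p<p_1(T)$ (Lemma \ref{estroboscopica_S}); since the statement allows all $p\in[0,1)$, for $p\geq p_1(T)$ the formula \eqref{S_infect_free} defines a negative but nonvanishing solution (its denominator stays positive on $[0,T)$), so you should replace $\ln\mathcal{S}$ by $\ln|\mathcal{S}|$, after which your identity survives unchanged. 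The paper's direct integration is sign-agnostic and silently covers this case, but in the regime $p<p_1(T)$, where the disease-free periodic solution is biologically meaningful, your argument is complete as written.
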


\begin{proof}

\begin{enumerate}
\item 
For the sake of simplicity, let us assume $t_0 = 0$. Then we have
\begin{eqnarray*}
\int_0^T \mathcal{S}(t) \, \mathrm{d}t \overset{\eqref{S_infect_free}}{=} \int_0^T \dfrac{A\left[e^{AT}\left(1-p\right)-1\right]}{e^{AT}\left(1-p\right)-1+pe^{A\left(T-t\right)}} \, \mathrm{d}t =  \ln{\left( 1-p \right)} + AT.
\label{expre_integral_S_til}
\end{eqnarray*}

Therefore,

\begin{eqnarray}
\mathcal{R}_p &=&  \dpt \dfrac{\beta_0}{\sigma + g}\dfrac{1}{T} \int_0^T  \mathcal{S}(t) \, \mathrm{d}t \label{RpAND_Sc} \\
\nonumber && \\
\nonumber && \\
\nonumber& {=}&  \dfrac{\beta_0}{\sigma + g}\dfrac{1}{T} \big[ \ln{(1-p)} + AT \big] \label{Rp1} \\
\nonumber && \\
\nonumber && \\
\nonumber &\overset{\eqref{R0Sc}}{=}& \dfrac{1}{S_c T} \big[ \ln{(1-p)} + AT \big] \label{Rp1} \\ 
\nonumber && \\
\nonumber && \\
 \nonumber&\overset{\eqref{R0},\eqref{R0Sc}}{=}&  \dpt \mathcal{R}_0 \left[ \dfrac{\ln{(1-p)}}{AT} + 1 \right] \label{Rp2}.
\end{eqnarray}

\item Since
\begin{eqnarray}
\nonumber \mathcal{R}_p=1 &\Leftrightarrow &   \dpt  \dfrac{1}{T} \int_0^T  \mathcal{S}(t) \, \mathrm{d}t=S_c\\
\nonumber \\
\nonumber   &\Leftrightarrow &   \exp{\left\{\beta_0 \displaystyle \int_{nT}^{(n+1)T} \mathcal{S}(t)\, \mathrm{d}t \right\}} e^{-\left( \sigma+g \right) T} = 1,
 \end{eqnarray}
the result follows by observing the analytic expression of $F_I$ in Lemma \ref{ponto_fixo_I}.
\bigbreak

\item From the proof of item {\it (1)}, one knows that $\mathcal{R}_p =  \mathcal{R}_0 \left[ \frac{\ln{(1-p)}}{AT} + 1 \right] $. If $p>0$, then

$$
\nonumber \frac{\partial \mathcal{R}_p}{\partial T}(T) = -\dfrac{\mathcal{R}_0\ln{\left(1-p\right)}}{AT^2} >  0 \qquad \text{and} \qquad \frac{\partial \mathcal{R}_p}{\partial p}(T) = -\dfrac{\mathcal{R}_0}{AT \left(1-p\right)} < 0 .
$$

\end{enumerate}

Items {\it (4)} and {\it (5)} are straightforward using the formula of $\mathcal{R}_p$ of item  {\it (1)}.\\
\end{proof}
 
The following useful result relates the stability of the impulsive periodic solution of Lemma \ref{Expression1} with the {\it basic reproduction number} $\mathcal{R}_p$ for \eqref{modelo3}.  
 
\begin{lem} \label{e_qui_va_lence}
With respect to system \eqref{modelo3}, if \,$A > S_c$ (\footnote{$A>S_c \,\, \Leftrightarrow \,\, \mathcal{R}_0 > 1$, by Lemma \ref{A>S_c}.}), then \,$\mathcal{R}_p < 1 \,\,  \Leftrightarrow \,\, p > p_2.$
\end{lem}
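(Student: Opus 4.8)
The plan is to prove the equivalence as a chain of \emph{reversible} inequalities, starting from the closed-form expression for $\mathcal{R}_p$ already obtained in Lemma \ref{Lema7}(1) and unwinding it until the threshold $p_2$ emerges. First I would recall that, by Lemma \ref{Lema7}(1),
$$
\mathcal{R}_p = \mathcal{R}_0\left[\frac{\ln(1-p)}{AT} + 1\right],
$$
and that $\mathcal{R}_0 = A/S_c$ by \eqref{R0} and \eqref{S_c(def)}. Since $A > S_c$, Lemma \ref{A>S_c} gives $\mathcal{R}_0 > 1 > 0$; this positivity is precisely what will let me divide by $\mathcal{R}_0$ without reversing the inequality.

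Then I would carry out the algebra. The condition $\mathcal{R}_p < 1$ is equivalent to
$$
\frac{\ln(1-p)}{AT} + 1 < \frac{1}{\mathcal{R}_0} = \frac{S_c}{A},
$$
hence, after multiplying through by $AT > 0$, to $\ln(1-p) < (S_c - A)T = -(A - S_c)T$. Exponentiating (the exponential being strictly increasing) yields $1 - p < e^{-(A - S_c)T}$, that is, $p > 1 - e^{-(A-S_c)T} = p_2(T)$. Every step is an equivalence, so both implications of the lemma follow simultaneously.

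Since this is a direct computation, there is no genuine analytic obstacle; the only points that require care are bookkeeping ones. First, I must keep track of the direction of the inequality when dividing by $\mathcal{R}_0$ and when multiplying by $AT$ — both factors are positive exactly because $\mathcal{R}_0 > 0$ (here the hypothesis $A > S_c$ enters through Lemma \ref{A>S_c}) and $T > 0$. Second, the hypothesis $A > S_c$ must be invoked at the right moment so that $(A - S_c)T > 0$ and the resulting threshold $p_2(T) = 1 - e^{-(A-S_c)T}$ lies in $(0,1)$, which is what makes the conclusion a meaningful statement about an admissible vaccination coverage. I would close by noting that this is exactly the role $A > S_c$ plays in the bifurcation diagram of Theorem \ref{thA}, so the lemma dovetails with item (7) of that theorem.
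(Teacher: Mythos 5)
Your proof is correct and takes essentially the same route as the paper: the paper's proof is the identical chain of equivalences, written with $\mathcal{R}_p = \frac{1}{S_c T}\left[\ln(1-p)+AT\right]$, which coincides with your starting form $\mathcal{R}_0\left[\frac{\ln(1-p)}{AT}+1\right]$ since $\mathcal{R}_0 = A/S_c$. One minor bookkeeping remark: positivity of $\mathcal{R}_0$ does not require $A>S_c$ (it follows from the positive parameters alone, by \eqref{R0}); the hypothesis $A>S_c$ is needed only so that $p_2(T)\in(0,1)$ is a meaningful threshold, as you correctly note at the end.
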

 
\begin{proof}
The proof follows from:
$$
\mathcal{R}_p < 1  \Leftrightarrow   \dfrac{1}{S_c T} \big[ \ln{(1-p)} + AT \big] < 1  \Leftrightarrow   p > p_2.
$$
\end{proof}
   
\section{Local stability of the disease-free  solutions} \label{Section_of_stability}
 The next result proves the local asymptotic stability of the disease-free periodic solutions of \eqref{modelo3}, $(0,0)$ and $(\mathcal{S}, 0)$,  using {\it Floquet multipliers}.

\begin{prop} \label{stable_periodic_solution}
 With respect to  \eqref{modelo3}, the following assertions hold for $T>0$:
 
\begin{enumerate}
\item the disease-free trivial solution $(0,0)$ is locally asymptotically stable provided  $p > p_1(T)$;
\smallskip
\item the disease-free periodic solution $(\mathcal{S},0)$ is locally asymptotically stable provided  $p_2(T) < p < p_1(T)$.
\end{enumerate}
\end{prop}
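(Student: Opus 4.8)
The plan is to read off the two Floquet multipliers of each disease-free periodic orbit from the linearisation of the $T$-stroboscopic map $P\colon (S_n,I_n)\mapsto (S_{n+1},I_{n+1})$ at its fixed points $(0,0)$ and $(x_1^\star,0)$, and to compare them with $1$. The key structural observation is that the line $\{I=0\}$ is invariant both under the flow of \eqref{modelo3} (because $\dot I$ carries the factor $I$) and under the pulse (because $I(nT)=I(nT^-)$); hence $I_{n+1}\equiv 0$ whenever $I_n=0$, so $\partial_{S_n}I_{n+1}=0$ at every point of $\{I=0\}$ and the Jacobian $DP$ is upper triangular there. Its eigenvalues are therefore the diagonal entries: a \emph{tangential} multiplier $\lambda_S=F_S'(S^\star)$ governing the dynamics inside $\{I=0\}$, and a \emph{transverse} multiplier $\lambda_I=\partial_{I_n}I_{n+1}$ governing the direction normal to it. Local asymptotic stability is then equivalent to $|\lambda_S|<1$ and $|\lambda_I|<1$.

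For the tangential multiplier I would differentiate the closed form of $F_S$ from Lemma \ref{estroboscopica_S}, obtaining $F_S'(x)=\dfrac{A^2(1-p)e^{AT}}{[x(e^{AT}-1)+A]^2}$. Evaluating at $x_2^\star=0$ gives $\lambda_S=(1-p)e^{AT}$, so the trivial solution contracts tangentially iff $(1-p)e^{AT}<1$, i.e. $p>p_1(T)$. Evaluating at $x_1^\star$, the identity $x_1^\star(e^{AT}-1)+A=A(1-p)e^{AT}$ (a direct substitution of the fixed point of Lemma \ref{estroboscopica_S}) collapses the denominator and yields $\lambda_S=\big[(1-p)e^{AT}\big]^{-1}$, the reciprocal of the trivial case; hence $(\mathcal{S},0)$ contracts tangentially iff $(1-p)e^{AT}>1$, i.e. $p<p_1(T)$. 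This reciprocal relation is precisely the exchange of tangential stability across $p=p_1(T)$ announced in Theorem \ref{thA}(6).

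For the transverse multiplier, differentiating $F_I(y)=y\exp\{\beta_0\int_{nT}^{(n+1)T}S(t)\,\mathrm{d}t\}e^{-(\sigma+g)T}$ at $y=0$ along the relevant periodic $S$-profile gives $\lambda_I=\exp\{\beta_0\int_0^T\mathcal{S}(t)\,\mathrm{d}t-(\sigma+g)T\}$, the pulse on $I$ contributing the factor $1$. At $(0,0)$ the orbit is $\mathcal{S}\equiv 0$, so $\lambda_I=e^{-(\sigma+g)T}<1$ unconditionally; combined with the previous paragraph this proves item (1). At $(\mathcal{S},0)$ I would insert $\int_0^T\mathcal{S}(t)\,\mathrm{d}t=\ln(1-p)+AT$ (already computed in the proof of Lemma \ref{Lema7}), so that $\lambda_I<1$ becomes $\ln(1-p)+AT<S_cT$, i.e. $p>1-e^{-(A-S_c)T}=p_2(T)$; equivalently $\lambda_I<1\Leftrightarrow\mathcal{R}_p<1\Leftrightarrow p>p_2(T)$ by Lemma \ref{e_qui_va_lence}. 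Intersecting the tangential condition $p<p_1(T)$ with the transverse condition $p>p_2(T)$ gives item (2).

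The main thing to get right is the justification that the monodromy of the impulsive orbit genuinely factorises into these two scalar multipliers: I must invoke the stability framework of Section \ref{s:preliminaries} to argue that stability of the $T$-periodic solution is decided by the spectrum of $DP$, and that invariance of $\{I=0\}$ forces the triangular form, so the (generally nonzero) off-diagonal entry $\partial_{I_n}S_{n+1}$ — nonzero because $\partial_I\dot S=-\beta_0 S\neq 0$ — is irrelevant to the eigenvalues. A secondary, purely bookkeeping point is to check that the variational computation of $\lambda_I$ is carried out along the correct periodic profile $\mathcal{S}(t)$ of Lemma \ref{Expression1}, and that the pulse Jacobian $\mathrm{diag}(1-p,1)$ is folded into $F_S$ (the factor $1-p$ already sits inside $F_S$) rather than counted twice.
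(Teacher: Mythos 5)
Your proposal is correct, and its conclusions match the paper's, but it takes a genuinely different route. The paper linearizes the \emph{flow} about each disease-free solution, writes the impulsive variational system \eqref{linear_system_}, computes the upper-triangular fundamental matrix, folds in the pulse Jacobian $\mathrm{diag}(1-p,1)$, and reads off the Floquet multipliers \eqref{eigenvalue1}--\eqref{eigenvalue2}, invoking \cite[Theorem 3.5, pp. 30]{Bainov_livro} for the stability criterion. You instead differentiate the already-available closed-form stroboscopic maps $F_S$ and $F_I$ of Lemmas \ref{ponto_fixo_I} and \ref{estroboscopica_S} at their fixed points, with the triangularity of $DP$ forced by invariance of $\{I=0\}$. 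The two computations agree, as they must, since $DP$ at a fixed point \emph{is} the monodromy matrix: your $\lambda_S=F_S'(x_1^{\star})=\bigl[(1-p)e^{AT}\bigr]^{-1}$ equals the paper's $\lambda_1=(1-p)\exp\bigl\{AT-2\int_0^T\mathcal{S}(t)\,\mathrm{d}t\bigr\}$ after substituting $\int_0^T\mathcal{S}(t)\,\mathrm{d}t=\ln(1-p)+AT$ (the identity from Lemma \ref{Lema7}), and your $\lambda_I$ is literally $\lambda_2$, so $\lambda_I<1\Leftrightarrow\mathcal{R}_p<1\Leftrightarrow p>p_2(T)$ as in Lemma \ref{e_qui_va_lence}. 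Your route is more elementary — no variational ODE, no fundamental matrix — and it makes transparent both the reciprocity $F_S'(x_1^{\star})=1/F_S'(0)$ underlying the exchange of stability at $p=p_1(T)$ and the fact that the pulse factor $1-p$ is already absorbed in $F_S$; what the paper's route buys is the standard Floquet framework, which is reused essentially verbatim (with $\beta_\gamma$ in place of $\beta_0$) in the proof of Corollary \ref{cor1}.

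Two points in your write-up deserve one more line each. First, the reduction ``spectrum of $DP$ inside the unit circle $\Rightarrow$ local asymptotic stability of the $T$-periodic solution'' should be stated explicitly as the standard result on hyperbolic attracting fixed points of $C^1$ maps; note that this meshes exactly with the paper's definition of stability in Subsection \ref{ss: definitions}, which is formulated through the stroboscopic iterates $\varphi(kT,y_0)$, so no continuous-time interpolation argument is needed. Second, in computing $\lambda_I=\partial_{I_n}I_{n+1}$, the exponent of $F_I$ involves the whole trajectory $S(t)$, which depends on $(S_n,I_n)$; differentiating gives $\partial_{I_n}I_{n+1}=\exp\bigl\{\beta_0\int_{nT}^{(n+1)T}S(t)\,\mathrm{d}t\bigr\}e^{-(\sigma+g)T}+I_n\cdot\partial_{I_n}\bigl[\exp\{\cdots\}\bigr]e^{-(\sigma+g)T}$, and the second term vanishes at $I_n=0$ precisely because of the prefactor $I_n$. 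That is the rigorous content behind your phrase ``along the relevant periodic $S$-profile,'' and it justifies evaluating the integral along $\mathcal{S}(t)$ at $(x_1^{\star},0)$ and along $S\equiv 0$ at the origin.
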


\begin{proof}
The stability of the disease-free periodic solution is found by  analyzing the behavior of initial conditions close to it. This is achievable by computing the  {\it monodromy matrix} (see \cite[Chapter II, pp. 28]{Bainov_livro}). If the absolute value of the eigenvalues ({\it Floquet multipliers}) of the monodromy matrix is less than one, then the periodic solution $(\mathcal{S},0)$ is asymptotically stable \cite[Theorem 3.5, pp. 30]{Bainov_livro}. 

 We  exhibit  the computations near the {\it $T$-periodic} solution  $(\mathcal{S},0)$.
For $t\geq 0$, we set
 \begin{eqnarray}
\label{linear_7}
\nonumber S(t) &=& \mathcal{S}(t) + s(t) \\
\nonumber I(t) &=& \mathcal{I}(t) + i(t) \quad \Leftrightarrow \quad I(t) \,\,\,\, = \,\,\,\, i(t) ,
\end{eqnarray}
where $s(t)$ and $i(t)$ are small terms close to 0. We will show that they vanish when $t$ increases.
Omitting the dependence of the variables on $t$, from \eqref{modelo3} and \eqref{linear_7}, one gets:
\begin{eqnarray*}
\nonumber \dfrac{\mathrm{d}s}{\mathrm{d}t} &=& \dfrac{\mathrm{d}S}{\mathrm{d}t} - \dfrac{\mathrm{d}\mathcal{S}}{\mathrm{d}t} \\
\nonumber && \\
\nonumber &=& S(A-S) - \beta_0 S I - \big[ \mathcal{S}(A - \mathcal{S}) - \beta_0 \mathcal{S} \mathcal{I} \big]  \\
\nonumber && \\
\nonumber &\overset{\mathcal{I} =0}{=}& SA - S^2 - \beta_0 S I - \mathcal{S}A + \mathcal{S}^2 \\
\nonumber && \\
\nonumber &=& - \mathcal{S}A + \mathcal{S}^2 + A(\mathcal{S} + s) - (\mathcal{S} + s)^2 - \beta_0(\mathcal{S} + s) i  \\
\nonumber && \\
\nonumber &=& - \mathcal{S}A + \mathcal{S}^2 + \mathcal{S}A + sA - \mathcal{S}^2 - 2s\mathcal{S} - s^2 - \beta_0 \mathcal{S} i - \beta_0 s i    \\
\nonumber && \\
\nonumber &=& As - 2 s \mathcal{S} - \beta_0 \mathcal{S} i - \beta_0 s i - s^2\\
\nonumber && \\
&=& (A - 2\mathcal{S} - \beta_0 i) s - \beta_0 \mathcal{S} i - s^2\label{LINEAR_S_I} \\
\nonumber && \\
\nonumber \text{and} \\
\nonumber && \\
\nonumber \dfrac{\mathrm{d}i}{\mathrm{d}t} &=& \dfrac{\mathrm{d}I}{\mathrm{d}t} - \dfrac{\mathrm{d}\mathcal{I}}{\mathrm{d}t} \\
\nonumber && \\
\nonumber &=& \beta_0 \mathcal{S} i - (\sigma + g) i  \\
\nonumber && \\
&=& \big[ \beta_0 \mathcal{S} - (\sigma + g) \big] i . \label{LINEAR_I}
\end{eqnarray*}

Hence, we model the evolution of $(s, i)$ as
\begin{equation}
\label{linear_system_}
\begin{array}{lcl}
\begin{cases}
\bigskip
&\dfrac{\mathrm{d}s}{\mathrm{d}t} = (A - 2\mathcal{S} - \beta_0 i) s - \beta_0 \mathcal{S} i  \\
\bigskip
\bigskip
&\dfrac{\mathrm{d}i}{\mathrm{d}t} = \big[ \beta_0 \mathcal{S} - (\sigma + g) \big] i \, , \qquad \qquad \qquad \qquad t \neq nT,  \\
\medskip
&s(nT) = (1-p) s(nT^-)  \\
\medskip
&i(nT) = i(nT^-).
\end{cases}
\end{array}
\end{equation}

Note that $\mathcal{S}(t)$ is known explicitly (Lemma \ref{Expression1}) and may be written as the periodic coefficient of $s(t)$ and $i(t)$. 
Lyapunov's theory neglects quadratic terms   to compute the stability of hyperbolic fixed points \cite[\S I6, p. 6]{IoossJoseph1980}.  
The solutions of \eqref{linear_system_} may be written as 
\begin{equation*}
\label{m1}
\begin{array}{lcl}
\left(\begin{array}{c}
s(t) \\ 
\\
i(t)
\end{array}\right)
= 
\Phi(t) \left(\begin{array}{c}
s(0) \\ 
\\
i(0)
\end{array}\right) 
\end{array}
\quad \text{where} \quad
\begin{array}{lcl}
\Phi(t) = \left(\begin{array}{cc}
\varphi_{1,1}(t) & \varphi_{1,2}(t) \\ 
\\
\varphi_{2,1}(t) & \varphi_{2,2}(t)
\end{array}\right) 
\end{array}
\end{equation*}
is the fundamental matrix whose columns are the components of linearly independent solutions of \eqref{linear_system_}, and $\Phi(0)  $  is the identity matrix. According to Floquet theory \cite[\S VII.6.2, p. 146]{IoossJoseph1980}, $\Phi(t)$ satisfies
\begin{equation*}
\label{m2}
\begin{array}{lcl}
\dfrac{\mathrm{d}\Phi}{\mathrm{d}t} \Big|_{(s^\star, i^\star)}&=& \mathcal{J}(t) \Phi(t) \\
\\
&=& \left(\begin{array}{cc}
A - 2 \mathcal{S}(t) - \beta_0 i^{\star} & -\beta_0 s^{\star} -\beta_0 \mathcal{S}(t) \\ 
\\
\\
0 & \beta_0 \mathcal{S}(t) - \left(\sigma+g\right)
\end{array}\right) \Phi(t) \\
\\
\\
&\overset{(s^{\star},i^{\star})\,=\,(0,0)}{=}& \left(\begin{array}{cc}
A - 2 \mathcal{S}(t) & -\beta_0 \mathcal{S}(t) \\ 
\\
\\
0 & \beta_0 \mathcal{S}(t) - \left(\sigma+g\right)
\end{array}\right) \Phi(t) ,
\end{array}
\end{equation*}
where $\mathcal{J}(t)$ is the Jacobian matrix of \eqref{linear_system_} around   $(s^{\star},i^{\star})=(0,0)$. The fundamental matrix $\Phi(t)$ can be written as

\begin{eqnarray*}
\nonumber \Phi(t)  &=& \left(\begin{array}{cc}
\exp{ \left\{ AT - 2 \dpt \int_0^T \mathcal{S}(t) \, \mathrm{d}t \right\} } & \varphi_{1,2}(t) \\ 
\\
\\
0 & \exp{ \left\{ \beta_0 \dpt \int_0^T \mathcal{S}(t) \, \mathrm{d}t - \left(\sigma + g \right)T \right\} }
\end{array}\right).
\end{eqnarray*}
The exact form of $\varphi_{1,2}(t)$ is not necessary since it is not needed in the subsequent analysis. Since the linearisation of the third and fourth equations of \eqref{linear_system_} results in

\begin{equation*}
\label{m3}
\begin{array}{lcl}
\left(\begin{array}{c}
s(nT) \\ 
\\
i(nT)
\end{array}\right)
=
\left(\begin{array}{cc}
1-p &0 \\ 
\\
0 & 1
\end{array}\right)
\left(\begin{array}{c}
s(nT^-) \\ 
\\
i(nT^-)
\end{array}\right) ,
\end{array}
\end{equation*}

\noindent then the {\it Floquet multipliers} of $(\mathcal{S},0)$ are the  solutions in $\lambda$ of:

\begin{eqnarray}
\small
   \det{\left[\begin{array}{cc}
 (1-p) \, \exp{ \left\{ AT - 2 \dpt \int_0^T \mathcal{S}(t) \, \mathrm{d}t  \right\} } - \lambda & \varphi_{1,2}(t) \\ 
\\
\\
\nonumber 0 & \exp{ \left\{ \beta_0 \dpt \int_0^T \mathcal{S}(t) \, \mathrm{d}t - \left(\sigma + g \right)T \right\} } - \lambda
\end{array}\right]} &=& 0. 
\end{eqnarray}

\noindent They are explicitly given by

\begin{eqnarray}
\lambda_1 &=& (1-p) \, \exp{ \left\{ AT - 2 \dpt \int_0^T \mathcal{S}(t) \, \mathrm{d}t \right\}}>0, \label{eigenvalue1}  \\
\nonumber && \\
\lambda_2 &=& \exp{ \left\{ \beta_0 \dpt \int_0^T \mathcal{S}(t) \, \mathrm{d}t - \left(\sigma + g \right)T \right\}}>0.\label{eigenvalue2}
\end{eqnarray}

\bigbreak

\textbf{\textit{Floquet multipliers}} \textbf{associated to \boldsymbol{$(\mathcal{S},0)$}:}
From \eqref{eigenvalue1} we get
\begin{eqnarray*}
\nonumber  && \lambda_1<1\\
\nonumber && \\
\nonumber &\Leftrightarrow & (1-p) \, \exp{ \left\{ AT - 2 \dpt \int_0^T \mathcal{S}(t) \, \mathrm{d}t \right\} } < 1 \\
\nonumber && \\
\nonumber &\Leftrightarrow& \ln{(1-p)} + AT > 0 \\ 
\nonumber && \\
&\Leftrightarrow& p < p_1 \label{L1122},
\end{eqnarray*}

and from \eqref{eigenvalue2} we deduce that
\begin{eqnarray}
\nonumber  && \lambda_2<1\\
\nonumber && \\
\nonumber &\Leftrightarrow & \exp{ \left\{ \beta_0 \dpt \int_0^T \mathcal{S}(t) \, \mathrm{d}t - \left(\sigma + g \right)T \right\} } < 1 \\
\nonumber && \\
\nonumber&\Leftrightarrow&           \dfrac{1}{T} \int_0^T  \mathcal{S}(t) \, \mathrm{d}t <  \dfrac{\sigma + g}{\beta_0}  \overset{\eqref{R0Sc}}{=} S_c    \\ 
\nonumber && \\
\nonumber  &\Leftrightarrow& \dfrac{\beta_0}{\sigma + g}\frac{1}{T} \int_0^T  \mathcal{S}(t) \, \mathrm{d}t < 1 \\
\nonumber && \\
\nonumber &\overset{\eqref{RpAND_Sc}}{\Leftrightarrow}& \mathcal{R}_p <1 .
\end{eqnarray}

\medskip

\textbf{\textit{Floquet multipliers}} \textbf{associated to \boldsymbol{$(0,0)$}:}

$$
  \lambda_1 = (1-p) \, e^{AT}   \quad \text{and} \quad \lambda_2=e^{- \left(\sigma + g \right)T} .
$$
 
\medbreak It is easy to check that $\lambda_1<1$ if and only if $p>p_1(T)$ and $\lambda_2<1$. The result is    shown.
\end{proof}

 The maps $p_1$ and $p_2$ given in Theorem \ref{thA} admit inverse. Let us denote them by $T_1$ and $T_2$, respectively, which  can be explicitly given by

\begin{eqnarray*}
\label{T_max_}
T_1 (p)\coloneqq \dfrac{\left| \ln{(1-p)} \right|}{A} \quad \text{and} \quad T_{\text{2}}(p) \coloneqq \dfrac{\left| \ln{(1-p)} \right|}{A - S_c} \,\, , \qquad \text{for} \,\,\, A > S_c .
\end{eqnarray*}

\medskip

  Omitting the dependence of $T_1$ and $T_2$ on $p$, the following result is a consequence of Proposition \ref{stable_periodic_solution}. It characterizes the stability of $(0,0)$ and $(\mathcal{S},0)$ as function of $T$ (cf. Figure \ref{Bif_Diag_ThA}).
  \bigbreak
 
\begin{cor} \label{Tmax_Lemma}
With respect to  \eqref{modelo3}, the following assertions hold for $p >0$:

\medbreak

\begin{enumerate}
\item If \,$T<T_1(p)$, then there are no non-trivial periodic solutions; the trivial equilibrium  $(0,0)$ is stable;
\smallskip
\item If \,$T_1(p) < T < T_{2}(p)$, then the disease-free periodic solution $(\mathcal{S},0)$ is stable and the trivial equilibrium  $(0,0)$ is unstable;
\smallskip
\item If \,$T>T_2(p)$, then both the periodic solution  $(\mathcal{S},0)$ and the trivial equilibrium  $(0,0)$ are unstable.
\end{enumerate}
\end{cor}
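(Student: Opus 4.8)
The plan is to derive everything from Proposition \ref{stable_periodic_solution} together with the explicit Floquet multipliers computed in its proof, after re-expressing the stability thresholds in terms of $T$ rather than $p$. The key observation is that $p_1(T)=1-e^{-AT}$ and $p_2(T)=1-e^{-(A-S_c)T}$ are both strictly increasing in $T$ (since $A>0$ and $A-S_c>0$ by the hypothesis $A>S_c$), hence invertible, and their inverses are precisely $T_1(p)$ and $T_2(p)$. Consequently, for fixed $p>0$ one has the equivalences
\begin{equation*}
p > p_1(T) \;\Longleftrightarrow\; T < T_1(p), \qquad p > p_2(T) \;\Longleftrightarrow\; T < T_2(p).
\end{equation*}
Moreover, because $A>A-S_c$, the inverses satisfy $T_1(p)<T_2(p)$, so the three regimes $T<T_1$, $T_1<T<T_2$ and $T>T_2$ genuinely partition $(0,+\infty)$ for each fixed $p>0$.

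First I would treat case (1). Here $T<T_1(p)$ translates into $p>p_1(T)$, so Proposition \ref{stable_periodic_solution}(1) gives local asymptotic stability of $(0,0)$. To rule out non-trivial periodic solutions, I invoke Lemma \ref{estroboscopica_S}: the non-trivial fixed point of the stroboscopic map $F_S$ is $x_1^{\star}=A\left(1-\tfrac{pe^{AT}}{e^{AT}-1}\right)$, which is strictly negative exactly when $p>p_1(T)$. Combined with Lemma \ref{ponto_fixo_I} for the $I$-component, this shows that $(0,0)$ is the only non-negative fixed point of $(F_S,F_I)$, so no non-trivial $T$-periodic orbit can exist in the positively invariant region $\mathcal{M}$.

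For cases (2) and (3) the instability statements are the part that cannot be read off directly from Proposition \ref{stable_periodic_solution}, since that result asserts only \emph{sufficient} conditions for stability. Instead I would return to the explicit Floquet multipliers. For $(0,0)$ these are $\lambda_1=(1-p)e^{AT}$ and $\lambda_2=e^{-(\sigma+g)T}$; since $\lambda_2<1$ always while $\lambda_1>1 \Leftrightarrow p<p_1(T) \Leftrightarrow T>T_1(p)$, the origin is unstable in both case (2) and case (3). For $(\mathcal{S},0)$ the multipliers are those of \eqref{eigenvalue1}--\eqref{eigenvalue2}, with $\lambda_2>1 \Leftrightarrow \mathcal{R}_p>1 \Leftrightarrow p<p_2(T)$ by Lemma \ref{e_qui_va_lence}. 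Thus in case (3), where $T>T_2(p)\Leftrightarrow p<p_2(T)$, the multiplier $\lambda_2$ exceeds $1$ and $(\mathcal{S},0)$ is unstable; in case (2), where $p_2<p<p_1$, Proposition \ref{stable_periodic_solution}(2) already yields its stability, and the instability of $(0,0)$ follows as above.

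I do not expect a genuine obstacle here, as the result is essentially a bookkeeping corollary. The one point requiring care is not to conflate the sufficient stability conditions of Proposition \ref{stable_periodic_solution} with the instability conclusions needed in cases (2) and (3): the latter must be extracted from the sign of the individual Floquet multipliers, checking in each regime that at least one multiplier has modulus strictly greater than one.
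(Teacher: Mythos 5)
Your proposal is correct and follows essentially the same route as the paper, which presents this corollary as a direct consequence of Proposition \ref{stable_periodic_solution} after inverting the monotone maps $p_1$ and $p_2$ into $T_1$ and $T_2$. Your explicit extraction of the instability statements from the Floquet multipliers \eqref{eigenvalue1}--\eqref{eigenvalue2} (together with Lemmas \ref{estroboscopica_S} and \ref{e_qui_va_lence}) merely fills in bookkeeping the paper leaves implicit.
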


 \section{Proof of (2) of Theorem \ref{thA}}
 \label{section: item (2)}
For $T>0$, if $p>p_1$, the unique compact and invariant set for the stroboscopic map $(F_S, F_I)$ is $(0,0)$. This is the unique candidate for the {\it $\omega$-limit} of a trajectory of a planar differential equation (cf. {\cite{Bonotto2008}})

\mathversion{bold}
\section{Proof of (3) of Theorem \ref{thA}}
\label{Section_of_global_stability}
\mathversion{normal}

We prove the global stability of the disease-free periodic solution $(\mathcal{S}, 0)$ given in Lemma \ref{Expression1}.  
By Proposition \ref{stable_periodic_solution}, we know  that  if $p\in (p_2(T), p_1(T))$, then the  non-trivial periodic solution $(\mathcal{S},0)$ is locally asymptotically stable.
Suppose that $(S(t), I(t))$, $t\in \RR_0^+$ is a solution of  \eqref{modelo3} with positive initial conditions in $\mathcal{M}$ (given in Lemma \ref{region_}).

\begin{lem}
\label{lema10}
 If \,$\mathcal{R}_p < 1$, then  $\displaystyle \lim_{t \rightarrow +\infty} I(t) = 0$. 
\end{lem}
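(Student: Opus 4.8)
The plan is to prove that the stroboscopic sequence $I_n=I(nT,(S_0,I_0))$ decays geometrically and then to control $I(t)$ between consecutive pulses. The starting point is the multiplicative recursion of Lemma \ref{ponto_fixo_I}, which, writing $\sigma+g=\beta_0 S_c$ (see \eqref{R0Sc}), becomes
\begin{equation*}
I_{n+1}=I_n\,\exp\left\{\beta_0\left(\int_{nT}^{(n+1)T}S(t)\,\mathrm{d}t-S_cT\right)\right\}.
\end{equation*}
Everything thus reduces to showing that, for all large $n$, the average of the \emph{true} susceptible trajectory over one period stays below $S_c$ by a fixed margin.

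First I would compare $S(t)$ with the disease-free dynamics. Since $\dot S=S(A-S)-\beta_0 IS\le S(A-S)$ and the reset $S\mapsto(1-p)S$ is order preserving (because $1-p\ge 0$), the scalar comparison principle applied across the impulses yields $S(t)\le\widehat S(t)$, where $\widehat S$ solves the pulsed logistic problem \eqref{s_wt_i} with the same initial datum $S_0$. The stroboscopic map $F_S$ of Lemma \ref{estroboscopica_S} is increasing and concave on $\RR_0^+$ with $F_S(0)=0$ and $F_S'(0)=(1-p)e^{AT}>1$ precisely when $p<p_1(T)$, so in the regime $p\in(p_2(T),p_1(T))$ of interest its positive fixed point $x_1^\star$ is globally attracting for $S_0>0$; hence $\widehat S(t)\to\mathcal{S}(t)$, the periodic solution of Lemma \ref{Expression1}. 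Consequently
\begin{equation*}
\limsup_{n\to+\infty}\frac{1}{T}\int_{nT}^{(n+1)T}S(t)\,\mathrm{d}t\le\frac{1}{T}\int_0^T\mathcal{S}(t)\,\mathrm{d}t.
\end{equation*}

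The hypothesis $\mathcal{R}_p<1$ now supplies the decisive gap: by \eqref{Rp_eq} it is exactly the statement $\frac{1}{T}\int_0^T\mathcal{S}(t)\,\mathrm{d}t<S_c$. Choosing any $\eta\in\bigl(0,\,S_c-\tfrac1T\int_0^T\mathcal{S}\,\mathrm{d}t\bigr)$, the limsup inequality produces an integer $N$ such that $\frac1T\int_{nT}^{(n+1)T}S(t)\,\mathrm{d}t<S_c-\eta$ for all $n\ge N$, whence the exponent in the recursion is below $-\beta_0\eta T<0$. Therefore $I_{n+1}\le e^{-\beta_0\eta T}I_n$ for $n\ge N$, giving $I_n\to 0$ geometrically. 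To pass from the sampled sequence to $I(t)$, I use that between pulses $I(t)=I_n\exp\{\beta_0\int_{nT}^tS(\tau)\,\mathrm{d}\tau-(\sigma+g)(t-nT)\}\le I_n\,e^{\beta_0 AT}$, since $S\le A$ on the invariant region $\mathcal{M}$ (Lemma \ref{region_}); hence $\sup_{t\in[nT,(n+1)T)}I(t)\to 0$ and $\lim_{t\to+\infty}I(t)=0$.

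I expect the main obstacle to be the rigorous justification of the comparison step combined with the global attractivity of $\mathcal{S}(t)$ for the pulsed logistic flow: one must check that the impulsive comparison principle genuinely transfers the inequality across each reset and that $F_S$ contracts every positive orbit to $x_1^\star$ (using monotonicity, concavity, $F_S'(0)>1$, and the absence of other nonnegative fixed points). The remaining technical point is upgrading the purely asymptotic limsup bound into a \emph{uniform} per-period margin valid for all $n\ge N$, which is what legitimizes the fixed geometric rate $e^{-\beta_0\eta T}$ rather than a rate that could degrade to $1$.
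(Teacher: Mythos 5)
Your proof is correct, and it reaches the conclusion by the same core mechanism as the paper: dominate $S(t)$ asymptotically by the disease-free periodic solution $\mathcal{S}(t)$, then read off exponential decay of $I$ from the sign of $\beta_0\frac{1}{T}\int_0^T\mathcal{S}\,\mathrm{d}t-(\sigma+g)$, which is exactly what $\mathcal{R}_p<1$ controls via \eqref{Rp_eq}. The difference is in execution, and here your version is actually more complete than the paper's. The paper simply \emph{asserts} that for any $0<\varepsilon\ll1$ there is $T^{\dagger}\gg1$ with $S(t)<\mathcal{S}(t)+\varepsilon$ for $t>T^{\dagger}$, citing only the equations of \eqref{modelo3}; you supply the missing justification, namely the impulsive comparison with the pulsed logistic problem \eqref{s_wt_i} (the reset $S\mapsto(1-p)S$ is order preserving, so the scalar comparison principle transfers across pulses) together with global attractivity of $x_1^{\star}$ for the increasing concave map $F_S$ with $F_S(0)=0$ and $F_S'(0)=(1-p)e^{AT}>1$. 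Likewise, the paper's chain of ``equivalences'' turning $\mathcal{R}_p<1$ into negativity of the exponent $\beta_0\int_{T^{\dagger}}^t(\mathcal{S}+\varepsilon)\,\mathrm{d}\tau-(\sigma+g)(t-T^{\dagger})$ is loose (it only holds up to a bounded correction when $t-T^{\dagger}$ is not a multiple of $T$, and only for $\varepsilon$ small relative to the gap $S_c-\frac1T\int_0^T\mathcal{S}$); your discrete packaging through the stroboscopic recursion $I_{n+1}=I_n\exp\bigl\{\beta_0\bigl(\int_{nT}^{(n+1)T}S\,\mathrm{d}t-S_cT\bigr)\bigr\}$ avoids this entirely, gives an explicit geometric rate $e^{-\beta_0\eta T}$, and handles intermediate times cleanly via $I(t)\le I_ne^{\beta_0AT}$ on $\mathcal{M}$. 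One small remark: the ``upgrade'' you flag as the remaining obstacle is not one --- since your per-period averages form a sequence indexed by $n$, the bound $\limsup_n\frac1T\int_{nT}^{(n+1)T}S\,\mathrm{d}t\le\frac1T\int_0^T\mathcal{S}\,\mathrm{d}t<S_c-\eta$ already yields the uniform margin for all $n\ge N$ by the definition of $\limsup$. Your restriction to $p\in(p_2(T),p_1(T))$ matches the standing hypothesis of the section proving item (3) of Theorem \ref{thA}; for $p\ge p_1(T)$ the same comparison gives $\widehat S\to0$, so the conclusion holds trivially there as well.
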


\begin{proof}
According to Lemma \ref{estroboscopica_S} we know that $x_1^{\star}>0 $  if and only if  \,$p < p_1$. From the first and third equations of \eqref{modelo3}, we see that, for any $0 < \varepsilon\ll 1$  there exists $T^{\dagger} \gg 1$, such that
\begin{eqnarray}
S(t) < \mathcal{S}(t) + \varepsilon ,  \label{jin3.1}
\end{eqnarray}
for all $t > T^{\dagger}$.    Substituting \eqref{jin3.1} into the second equation of \eqref{modelo3}, we obtain
\begin{eqnarray}
\nonumber \dfrac{\mathrm{d}I(t)}{\mathrm{d}t} &\leq& \beta_0 \left( \mathcal{S}(t) + \varepsilon \right) I(t) - \left(\sigma + g \right)I(t) .
\end{eqnarray}
Since $I(nT) = I(nT^-)$, for $t \in [T^{\dagger} + nT, T^{\dagger} + (n+1)T)$, we have
\begin{eqnarray}
\nonumber && \dfrac{\mathrm{d}I(t)}{\mathrm{d}t} \leq \left[ \beta_0 \left( \mathcal{S}(t) + \varepsilon \right) - \left(\sigma + g \right) \right] I(t) \\
\nonumber \\
\nonumber &\Rightarrow& \int_{T^{\dagger}}^t \dfrac{\mathrm{d}I(\tau)}{I(\tau)} \leq \int_{T^{\dagger}}^t \left[ \beta_0 \left( \mathcal{S}(\tau) + \varepsilon \right) - \left(\sigma + g \right) \right] \mathrm{d}\tau \\
\nonumber \\
\nonumber &\Leftrightarrow& \ln{I(t)} - \ln{I(T^{\dagger})} \leq \beta_0 \int_{T^{\dagger}}^t \left( \mathcal{S}(\tau) + \varepsilon \right) \mathrm{d}\tau - \left(\sigma + g \right)  \left(t - T^{\dagger} \right) \\
\nonumber \\
 &\Leftrightarrow& I(t) \leq I(T^{\dagger}) \exp{\left\{ \beta_0 \int_{T^{\dagger}}^t \left( \mathcal{S}(\tau) + \varepsilon \right)  \mathrm{d}\tau - \left(\sigma + g \right)  \left(t - T^{\dagger} \right)  \right\}}. \label{jin3.2}
\end{eqnarray}

By hypothesis, one knows that  $ \mathcal{R}_p < 1$. Since
\begin{eqnarray}
\nonumber &&\mathcal{R}_p < 1\\
\nonumber \\
\nonumber &\overset{\varepsilon>0\, \, \text{suf. small}}{\Leftrightarrow}& \dfrac{\beta_0}{\left( \sigma + g \right)} \dfrac{1}{\left( t-T^{\dagger} \right)} \int_{T^{\dagger}}^t \left( \mathcal{S}(\tau) + \varepsilon \right)  \mathrm{d}\tau < 1 \\
\nonumber \\
\nonumber & \Leftrightarrow&\beta_0 \int_{T^{\dagger}}^t \left( \mathcal{S}(\tau) + \varepsilon \right)  \mathrm{d}\tau - \left(\sigma + g \right)  \left(t - T^{\dagger} \right) < 0,
\end{eqnarray}
  and $ I(T^{\dagger})\geq 0$, then from \eqref{jin3.2} we conclude that $\displaystyle \lim_{t \rightarrow \infty} I(t) = 0$.
\end{proof}
\subsection*{Auxiliary variable}
Since $S(t)>0$ and $\mathcal{S}(t)>0$, for all $t\in \RR_0^+$, we set:
\begin{eqnarray}
\label{Change11}
x(t) = \ln\left({\dfrac{S(t)}{\mathcal{S}(t)}}\right) \quad \Leftrightarrow \quad S(t) = e^{x(t)} \mathcal{S}(t) .  \label{mud_var_ln}
\end{eqnarray}

\begin{lem} \label{mud_var_ln_3} 
 $ x'(t)=- \mathcal{S}(t) \left( e^{x(t)} - 1 \right) - \beta_0 I(t)$
\end{lem}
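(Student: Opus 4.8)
The plan is to differentiate $x(t)=\ln S(t)-\ln \mathcal{S}(t)$ directly and simplify using the two governing equations. First I would note that, since both $S(t)>0$ and $\mathcal{S}(t)>0$ for all $t\in\RR_0^+$, the logarithmic derivative is well defined, and for $t\neq nT$ one has
$$x'(t)=\frac{S'(t)}{S(t)}-\frac{\mathcal{S}'(t)}{\mathcal{S}(t)}.$$

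Next I would substitute the two logarithmic derivatives. Dividing the first equation of \eqref{modelo3} by $S(t)>0$ gives $S'(t)/S(t)=A-S(t)-\beta_0 I(t)$. Since $(\mathcal{S},0)$ is a solution of \eqref{modelo3} (Lemma \ref{Expression1}), its $S$-component obeys the logistic law between pulses, namely $\dot{\mathcal{S}}=\mathcal{S}(A-\mathcal{S})$ as in \eqref{s_wt_i}, so $\mathcal{S}'(t)/\mathcal{S}(t)=A-\mathcal{S}(t)$. Subtracting yields
$$x'(t)=\big(A-S(t)-\beta_0 I(t)\big)-\big(A-\mathcal{S}(t)\big)=\mathcal{S}(t)-S(t)-\beta_0 I(t).$$
Finally, invoking the change of variables \eqref{mud_var_ln}, $S(t)=e^{x(t)}\mathcal{S}(t)$, I would replace $S(t)$ to obtain
$$x'(t)=\mathcal{S}(t)-e^{x(t)}\mathcal{S}(t)-\beta_0 I(t)=-\mathcal{S}(t)\left(e^{x(t)}-1\right)-\beta_0 I(t),$$
which is precisely the claimed identity.

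The only point requiring care is the behavior of $x$ across the impulse instants $t=nT$. There the same factor $1-p$ appears in both jump conditions, $S(nT)=(1-p)S(nT^-)$ and $\mathcal{S}(nT)=(1-p)\mathcal{S}(nT^-)$, so it cancels in the ratio $S/\mathcal{S}$ and hence $x(nT)=x(nT^-)$. Thus $x$ is in fact continuous at the pulses and carries no jump, so the derivative formula holds on each interval $(nT,(n+1)T)$ and patches into a single equation governing $x$ on all of $\RR_0^+$. This cancellation — which is exactly why the auxiliary variable $x$ is useful for the subsequent global-stability argument — is the main (and essentially only) subtlety; the remainder is the short direct computation above.
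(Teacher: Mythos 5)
Your proof is correct and follows essentially the same route as the paper: both compute $x'(t)$ as the difference of logarithmic derivatives, substitute $S'/S = A - S(t) - \beta_0 I(t)$ and $\mathcal{S}'/\mathcal{S} = A - \mathcal{S}(t)$ (using $\mathcal{I}=0$), and then eliminate $S(t)$ via $S(t) = e^{x(t)}\mathcal{S}(t)$. Your closing observation that the factor $1-p$ cancels in the ratio $S/\mathcal{S}$ at the pulse instants, so $x$ is continuous across $t=nT$, is a worthwhile detail that the paper leaves implicit, but it does not alter the argument.
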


\begin{proof}
The proof  follows from folklore derivative  computations. Indeed,\\
\begin{eqnarray}
\nonumber  x'(t) & \overset{\mathcal{I}(t) = 0}{=}& \dfrac{S(t) \left( A - S(t) \right) - \beta_0 S(t) I(t)}{S(t)} - \dfrac{\mathcal{S}(t)\left(A - \mathcal{S}(t) \right)}{\mathcal{S}(t)} \\
\nonumber \\
\nonumber  &=& A - S(t) - \beta_0 I(t) - A + \mathcal{S}(t) \\
\nonumber \\
\nonumber &\overset{\eqref{mud_var_ln}}{=}& -e^{x(t)} \mathcal{S}(t) - \beta_0 I(t) + \mathcal{S}(t) \\
\nonumber \\
\nonumber &=&- \mathcal{S}(t) \left( e^{x(t)} - 1 \right) - \beta_0 I(t) .
\end{eqnarray}
\end{proof}

\noindent Lemma \ref{lema10} states that,  if $\mathcal{R}_p < 1$, then all solutions approach a disease free state ($I(t)=0$). Evaluating the equality of Lemma \ref{mud_var_ln_3} when $I=0$, we get: $ x'(t)\leq- \mathcal{S}(t) \left( e^{x(t)} - 1 \right).$

\begin{lem}  For $T^{\dagger} \gg 1$ of the proof of Lemma \ref{lema10}, we have:\\
\label{lema12}
 \,
\begin{enumerate}
\item 
$
 \dpt  \int_{T^{\dagger}}^t \dfrac{\mathrm{d}x(\tau)}{e^{x(\tau)} - 1} \leq - \int_{T^{\dagger}}^t  \mathcal{S}(\tau)\mathrm{d}\tau \\
$
\item $ \dpt \int_{T^{\dagger}}^t \dfrac{\mathrm{d}x(\tau)}{e^{x(\tau)} - 1}= - \left[ \left( x(t) - x(T^{\dagger}) \right) - \left( \ln{| e^{x(t)} - 1 |}   -  \ln{| e^{x(T^{\dagger})} - 1|} \right) \right]$\\
\end{enumerate}
\end{lem}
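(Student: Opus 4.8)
These two estimates are the analytic engine of the global-attractivity proof of $(\mathcal{S},0)$ carried out in Section \ref{Section_of_global_stability}: once combined with the divergence $\int_{T^{\dagger}}^{t}\mathcal{S}(\tau)\,\mathrm{d}\tau\to+\infty$ (valid since $\mathcal{S}$ is positive and $T$-periodic), they will pin the auxiliary variable $x(t)$ to $0$, that is, $S(t)\to\mathcal{S}(t)$. Accordingly, the plan is to treat item (2) as a pure quadrature and item (1) as a differential-inequality estimate, leaving the endpoint asymptotics to the section where the lemma is applied.

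For item (2) the plan is to exhibit the antiderivative of $1/(e^{u}-1)$. A one-line differentiation gives
\[
\frac{\mathrm{d}}{\mathrm{d}u}\Bigl(\ln|e^{u}-1|-u\Bigr)=\frac{e^{u}}{e^{u}-1}-1=\frac{1}{e^{u}-1},
\]
which can equally be obtained by the substitution $u=e^{x}$ followed by the partial-fraction splitting of $1/\bigl(u(u-1)\bigr)$. Changing variables $u=x(\tau)$, $\mathrm{d}u=x'(\tau)\,\mathrm{d}\tau$, and applying the fundamental theorem of calculus along the curve $\tau\mapsto x(\tau)$ over $[T^{\dagger},t]$ yields $\int_{T^{\dagger}}^{t}\frac{\mathrm{d}x(\tau)}{e^{x(\tau)}-1}=\bigl[\ln|e^{x(\tau)}-1|-x(\tau)\bigr]_{T^{\dagger}}^{t}$, and rearranging the two endpoint terms is exactly the stated identity. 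The only point of care is that $e^{x(\tau)}-1\neq0$ on the integration path, i.e. $S(\tau)\neq\mathcal{S}(\tau)$, so that the integrand is well defined.

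For item (1) I would begin from the identity of Lemma \ref{mud_var_ln_3}, namely $x'(\tau)=-\mathcal{S}(\tau)\bigl(e^{x(\tau)}-1\bigr)-\beta_0 I(\tau)$. Because $\beta_0 I(\tau)\geq0$, this produces the differential inequality $x'(\tau)\leq-\mathcal{S}(\tau)\bigl(e^{x(\tau)}-1\bigr)$ recorded just before the lemma; dividing by $e^{x(\tau)}-1$ and integrating over $[T^{\dagger},t]$ then gives item (1). The combination of the two items, letting $t\to+\infty$, sends the right-hand side of item (1) to $-\infty$, whence $x(t)-\ln|e^{x(t)}-1|\to+\infty$ and therefore $x(t)\to0$; this is the step that closes the global-stability argument.

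The main obstacle is precisely this division, since the direction of the inequality is preserved only on the set where $e^{x(\tau)}-1>0$, i.e. $S(\tau)>\mathcal{S}(\tau)$, and is reversed where $S(\tau)<\mathcal{S}(\tau)$. The crux is therefore to control the sign of $S-\mathcal{S}$ on $[T^{\dagger},\infty)$ and to verify that it is the one keeping the inequality in the required direction. I would analyse this through the difference $D=S-\mathcal{S}$, which, upon subtracting the two $\dot S$-equations, solves the scalar linear impulsive equation $\dot D=\bigl(A-S-\mathcal{S}\bigr)D-\beta_0 IS$ with jumps $D(nT)=(1-p)\,D(nT^-)$ and sign-definite forcing $-\beta_0 IS\leq0$; its variation-of-constants representation, whose transition operator is positive, keeps $D$ of one sign for $t\geq T^{\dagger}$. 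Together with the bound $S(t)<\mathcal{S}(t)+\varepsilon$ already available from Lemma \ref{lema10}, this is what I would exploit to legitimise the division and secure item (1).
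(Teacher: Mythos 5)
Your item (2) and the first half of your item (1) are exactly the paper's proof. For (2), the paper integrates $\frac{1}{e^{x}-1}=-\bigl(1-\frac{e^{x}}{e^{x}-1}\bigr)$ term by term, which is the same computation as your antiderivative $\ln|e^{u}-1|-u$; for (1), the paper likewise starts from Lemma \ref{mud_var_ln_3}, discards $-\beta_0 I(\tau)\leq 0$, divides by $e^{x(\tau)}-1$ and integrates — in fact the paper performs this division silently, so your observation that it preserves the inequality only where $e^{x(\tau)}-1>0$, i.e.\ $S(\tau)>\mathcal{S}(\tau)$, identifies a genuine soft spot that the paper's own proof does not address.

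Your proposed repair, however, does not close that gap. The variation-of-constants representation of $D=S-\mathcal{S}$ proves less than you claim: positivity of the scalar transition operator together with the sign-definite forcing $-\beta_0 I S\leq 0$ shows only that the set $\{D\leq 0\}$ is forward invariant (at $D=0$ one has $\dot D=-\beta_0 IS\leq 0$, and the jumps multiply $D$ by $1-p\geq 0$), so $D$ crosses zero at most once, and only from positive to negative. It does not force the surviving sign to be the one you need: if $D(T^{\dagger})<0$ — which nothing in Lemma \ref{lema10} excludes, since $S<\mathcal{S}+\varepsilon$ caps $D$ from above only — then $D(t)<0$ for all $t\geq T^{\dagger}$, and in that regime the division \emph{reverses} the inequality. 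Indeed, pointwise, $\frac{x'(\tau)}{e^{x(\tau)}-1}=-\mathcal{S}(\tau)-\frac{\beta_0 I(\tau)}{e^{x(\tau)}-1}\geq -\mathcal{S}(\tau)$ when $e^{x(\tau)}-1<0$, so item (1) is false (strictly, whenever $I>0$) there, and no verification of signs can rescue it. The lemma should therefore be read as valid on the region $S(\tau)>\mathcal{S}(\tau)$, where both your derivation and the paper's are sound and where the resulting upper bound on $x$ has content. For the complementary regime the right move is to bypass the division altogether: compare $x$ with the solution of $y'=-\mathcal{S}(t)\bigl(e^{y}-1\bigr)$, $y(T^{\dagger})=x(T^{\dagger})$. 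Since $y\equiv 0$ is an equilibrium, $e^{y}-1$ keeps a fixed sign along solutions, separation of variables is legitimate, and the explicit solution is $y(t)=-\ln\bigl(1-h(t)\bigr)$ with precisely the paper's $h$. The scalar comparison principle then yields $x(t)\leq -\ln\bigl(1-h(t)\bigr)$ for either sign of $x(T^{\dagger})$, which is exactly the conclusion of Lemma \ref{lema14A}(2) that the global-stability argument actually consumes; the same caveat and the same fix apply to the $\varepsilon$-perturbed inequality \eqref{mud_var_ln_7}.
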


\begin{proof}
\begin{enumerate}
\item From Lemma \ref{mud_var_ln_3}, since $\beta_0 I(t) \geq 0$, we may write

\begin{eqnarray}
\nonumber x'(t) \leq - \mathcal{S}(t) \left( e^{x(t)} - 1 \right) , \label{mud_var_ln_4}
\end{eqnarray}

for $t \in [T^{\dagger} + nT, T^{\dagger} + (n+1)T)$, which implies: 

\begin{eqnarray}
\nonumber && \dfrac{\mathrm{d}x(t)}{\mathrm{d}t} \leq - \mathcal{S}(t) \left( e^{x(t)} - 1 \right) \\
\nonumber \\
\nonumber& & \int_{T^{\dagger}}^t \dfrac{\mathrm{d}x(\tau)}{e^{x(\tau)} - 1} \leq - \int_{T^{\dagger}}^t  \mathcal{S}(\tau)\mathrm{d}\tau .
\end{eqnarray}
\item \begin{eqnarray}
\nonumber \int_{T^{\dagger}}^t \dfrac{\mathrm{d}x(\tau)}{e^{x(\tau)} - 1} &=&   - \int_{T^{\dagger}}^t \dfrac{e^{x(\tau)} -1 - e^{x(\tau)}}{e^{x(\tau)} - 1} \, \mathrm{d}x(\tau) \\
\nonumber \\
\nonumber &=& - \left[  \int_{T^{\dagger}}^t \mathrm{d}x(\tau) -  \int_{T^{\dagger}}^t \dfrac{e^{x(\tau)}}{e^{x(\tau)} - 1} \mathrm{d}x(\tau) \right] \\
\nonumber \\
\nonumber &=& - \left[ \left( x(t) - x(T^{\dagger}) \right) - \left( \ln{| e^{x(t)} - 1 |}   -  \ln{| e^{x(T^{\dagger})} - 1|} \right) \right] .
\end{eqnarray}
\end{enumerate}
\end{proof}
For  $T^{\dagger} \gg 1$ of the proof of Lemma \ref{lema10}, define the map $h: [T^{\dagger}, +\infty) \rightarrow \RR$  as
 \begin{eqnarray}
\nonumber h(t) = \left( e^{x(T^{\dagger})} - 1 \right) \exp{\left\{-\left(    x(T^{\dagger}) + \displaystyle\int_{T^{\dagger}}^t  \mathcal{S}(\tau)\mathrm{d}\tau  \right)\right\}} .
\end{eqnarray}
\bigbreak
\begin{lem} 
\label{lema14A}\, \medbreak
\begin{enumerate}
\item $\dpt \lim_{t\rightarrow +\infty} h(t)=0$\\
\smallskip
\item For all $t>T^{\dagger}$, we get $ x(t) \leq - \ln{\left( 1-h(t) \right)}. $\\
\end{enumerate}

\end{lem}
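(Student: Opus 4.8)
The plan is to handle the two items separately, both resting on the explicit periodic profile $\mathcal{S}(t)$ from Lemma \ref{Expression1} and on the integral estimates already packaged in Lemma \ref{lema12}.

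For item (1), I would isolate the $t$-independent prefactor by writing $h(t) = \left(e^{x(T^{\dagger})}-1\right)e^{-x(T^{\dagger})}\,\exp\!\left\{-\int_{T^{\dagger}}^t \mathcal{S}(\tau)\,\mathrm{d}\tau\right\}$, so that only the exponential of the integral varies with $t$. Since $\mathcal{S}$ is continuous, strictly positive and $T$-periodic (Lemma \ref{Expression1}), it attains a positive minimum $m=\min_{[0,T]}\mathcal{S}>0$, whence $\int_{T^{\dagger}}^t \mathcal{S}(\tau)\,\mathrm{d}\tau \geq m\,(t-T^{\dagger})\to +\infty$. The exponential then tends to $0$, and because the prefactor is a fixed finite number, $h(t)\to 0$.

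For item (2), I would combine the two identities of Lemma \ref{lema12}: substituting the closed form of item (2) into the inequality of item (1) and cancelling the overall sign yields
\[
x(t)-\ln\!\left|e^{x(t)}-1\right| \;\geq\; \int_{T^{\dagger}}^t \mathcal{S}(\tau)\,\mathrm{d}\tau \;+\; x(T^{\dagger})-\ln\!\left|e^{x(T^{\dagger})}-1\right|.
\]
The key simplification is the elementary identity $x-\ln|e^{x}-1|=-\ln|1-e^{-x}|$, which holds since $1-e^{-x}=(e^x-1)e^{-x}$. Applying it on both sides and exponentiating collapses the inequality to $\left|1-e^{-x(t)}\right| \leq \left|1-e^{-x(T^{\dagger})}\right|e^{-\int_{T^{\dagger}}^t \mathcal{S}}$, whose right-hand side is exactly $h(t)$ in the relevant regime $x(T^{\dagger})>0$ (equivalently $S(T^{\dagger})>\mathcal{S}(T^{\dagger})$), where $h(t)>0$. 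From $1-e^{-x(t)}\leq \left|1-e^{-x(t)}\right|\leq h(t)$ I obtain $e^{-x(t)}\geq 1-h(t)$, and since $h(t)<1$ (because $h(T^{\dagger})=1-e^{-x(T^{\dagger})}<1$ and $h$ is decreasing), taking logarithms gives $x(t)\leq -\ln\!\left(1-h(t)\right)$.

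The main obstacle is the sign of $e^{x(\tau)}-1$ along the trajectory, which is precisely what the absolute values in Lemma \ref{lema12} conceal and what legitimizes the division leading to its item (1). I would pin the upper bound to the branch $x>0$: by Lemma \ref{mud_var_ln_3}, on the set $\{x=0\}$ one has $x'=-\beta_0 I\leq 0$, so the region $\{x>0\}$ is forward invariant and the sign of $e^{x(\tau)}-1$ stays constant on $[T^{\dagger},t]$, making the manipulations valid. As a consistency check I would re-derive the same bound by the comparison principle: from $x'\leq -\mathcal{S}(t)\!\left(e^{x}-1\right)$, comparing with the solution $y$ of $y'=-\mathcal{S}(t)\!\left(e^{y}-1\right)$, $y(T^{\dagger})=x(T^{\dagger})$, which integrates explicitly to $1-e^{-y(t)}=h(t)$, i.e. $y(t)=-\ln\!\left(1-h(t)\right)$, so that $x(t)\leq y(t)=-\ln\!\left(1-h(t)\right)$.
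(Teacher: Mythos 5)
Your item (1) is correct and marginally more conceptual than the paper's own argument: the paper substitutes the closed form of $\mathcal{S}$ from Lemma \ref{Expression1} and computes $\int_{T^{\dagger}}^t \mathcal{S}(\tau)\,\mathrm{d}\tau$ explicitly before letting $t\to+\infty$, whereas you bound the integral below by $m\,(t-T^{\dagger})$ with $m>0$. One small correction there: $\mathcal{S}$ is not continuous (it jumps at the pulse times $nT$), but it is piecewise continuous, $T$-periodic and bounded below by $x_1^{\star}>0$ on each period interval (this uses $p<p_1(T)$, which holds in the regime where the lemma is applied), so your lower bound stands.

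In item (2), however, your main line rests on a claim that is backwards. From Lemma \ref{mud_var_ln_3}, on the set $\{x=0\}$ one has $x'=-\beta_0 I\leq 0$, which makes $\{x\leq 0\}$ forward invariant --- \emph{not} $\{x>0\}$: a trajectory with $x(T^{\dagger})>0$ can cross into $\{x<0\}$ (strictly so whenever $I>0$ at the crossing), in which case the sign of $e^{x(\tau)}-1$ is not constant on $[T^{\dagger},t]$ and the integral $\int \mathrm{d}x/(e^{x}-1)$ runs through the singularity at $x=0$, invalidating the manipulations inherited from Lemma \ref{lema12}. (To be fair, the paper's own proof has the same latent issue: it silently drops the absolute values in the chain of ``equivalences''; you were right to worry about the sign, you just resolved it with the wrong invariant region.) Two observations repair your argument. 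First, if $x(T^{\dagger})>0$ and $x$ crosses zero at some $t_1$, the bound is trivial afterwards, since then $h(t)>0$ and $x(t)\leq 0< -\ln\left(1-h(t)\right)$ for $t>t_1$. Second --- and preferably --- your closing ``consistency check'' is in fact a complete, self-contained proof that needs no sign analysis at all: the substitution $u=1-e^{-y}$ linearises the comparison equation $y'=-\mathcal{S}(t)\left(e^{y}-1\right)$ into $u'=-\mathcal{S}(t)\,u$, giving $1-e^{-y(t)}=\left(1-e^{-x(T^{\dagger})}\right)e^{-\int_{T^{\dagger}}^t \mathcal{S}(\tau)\,\mathrm{d}\tau}=h(t)$, hence $y(t)=-\ln\left(1-h(t)\right)$, and $x(t)\leq y(t)$ by the standard comparison principle (note $x=\ln(S/\mathcal{S})$ is continuous even across pulses, since $S$ and $\mathcal{S}$ are rescaled by the same factor $1-p$ there, so the comparison propagates through the impulse times). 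I recommend promoting that comparison argument to the main proof and deleting the invariance claim.
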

\bigbreak
\begin{proof}
\begin{enumerate}
\item Since
\begin{eqnarray}
\label{lim_ht_1}
\nonumber && \dpt \lim_{t\rightarrow +\infty} \displaystyle \int_{T^{\dagger}}^t  \mathcal{S}(\tau)\mathrm{d}\tau \\
\nonumber \\
\nonumber  &=& {\small \dpt \lim_{t\rightarrow +\infty}\Big( -\ln{\big(pe^{A(T-T^{\dagger})} - pe^{AT} + e^{AT}-1 \big)} +  \ln{\big(pe^{A(T-t)} + e^{AT}(1-p) -1\big)} }\\
\nonumber &&+ A(t - T^{\dagger}) \Big) = +\infty,
\end{eqnarray}

then   $\dpt \lim_{t\rightarrow +\infty} h(t)=0$.  

\smallskip

\item Using Lemma \ref{lema12}, one gets
\begin{eqnarray} 
\nonumber && \int_{T^{\dagger}}^t \dfrac{\mathrm{d}x(\tau)}{e^{x(\tau)} - 1} \leq - \int_{T^{\dagger}}^t  \mathcal{S}(\tau)\mathrm{d}\tau \\
\nonumber \\
\nonumber &\overset{\text{Lemma \ref{lema12}}}{\Leftrightarrow}& - \left[ \left( x(t) - x(T^{\dagger}) \right) - \left( \ln{| e^{x(t)} - 1 |}   -  \ln{| e^{x(T^{\dagger})} - 1|} \right) \right] \leq - \int_{T^{\dagger}}^t  \mathcal{S}(\tau)\mathrm{d}\tau \\
\nonumber \\
\nonumber &\Leftrightarrow& \ln{| e^{x(t)} - 1 |} \leq \ln{| e^{x(T^{\dagger})}-1|} + \left(x(t) - x(T^{\dagger})  \right) - \int_{T^{\dagger}}^t  \mathcal{S}(\tau)\mathrm{d}\tau \\
\nonumber \\
\nonumber &\Leftrightarrow& -x(t) \geq \ln{\left( 1-\left( e^{x(T^{\dagger})} - 1 \right) \exp{\left\{-\left(    x(T^{\dagger}) + \int_{T^{\dagger}}^t  \mathcal{S}(\tau)\mathrm{d}\tau  \right)\right\}}\right)} \\
\nonumber \\
\nonumber &\Leftrightarrow& x(t) \leq - \ln{\left( 1-h(t) \right)} .
\end{eqnarray}
\end{enumerate}
 \end{proof}
 On the other hand, since $\displaystyle \lim_{t \rightarrow \infty} I(t) =0$, for any $0 < \varepsilon \ll 1$, there exist $T^{\dagger} > 0$, such that $I(t) \leq  \varepsilon$ for  $t>T^{\dagger}$. From Lemma \ref{mud_var_ln_3} we conclude that 
\begin{eqnarray}
\nonumber  x'(t) \geq - \mathcal{S}(t) \left( e^{x(t)} - 1 + \varepsilon \right)  , \label{mud_var_ln_6} 
\end{eqnarray}

\noindent  and hence

\begin{eqnarray}
\int_{T^{\dagger}}^t \dfrac{\mathrm{d}x(\tau)}{e^{x(\tau)} - 1 + \varepsilon} \geq - \int_{T^{\dagger}}^t  \mathcal{S}(\tau)\mathrm{d}\tau .\label{mud_var_ln_7}
\end{eqnarray}

In the next lemma, we evaluate the integral of the left hand side of \eqref{mud_var_ln_7}.

\begin{lem}
 $$  \int_{T^{\dagger}}^t \dfrac{\mathrm{d}x(\tau)}{e^{x(\tau)} - 1 + \varepsilon} = - \left[ \left( x(t) - x(T^{\dagger}) \right) - \left( \ln{| e^{x(t)} - 1 + \varepsilon |}   -  \ln{| e^{x(T^{\dagger})} - 1 + \varepsilon|} \right) \right] $$
\end{lem}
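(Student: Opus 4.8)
The plan is to treat this as a pure antiderivative identity: the integrand has exactly the same shape as in Lemma \ref{lema12}(2), the only change being the additive shift $+\varepsilon$ in the denominator, so I would mimic that earlier decomposition. The idea is to split $\dfrac{1}{e^{x}-1+\varepsilon}$ into a constant piece, which integrates to $x$, and a piece proportional to the logarithmic derivative $\dfrac{e^{x}}{e^{x}-1+\varepsilon}$ of the denominator, which integrates to $\ln|e^{x}-1+\varepsilon|$; the fundamental theorem of calculus then turns the antiderivative into the difference of boundary values at $t$ and $T^{\dagger}$.

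Concretely, I would add and subtract $e^{x(\tau)}$ in the numerator, writing $\dfrac{1}{e^{x(\tau)}-1+\varepsilon}=\dfrac{e^{x(\tau)}-(e^{x(\tau)}-1+\varepsilon)}{(1-\varepsilon)\,(e^{x(\tau)}-1+\varepsilon)}=\dfrac{1}{1-\varepsilon}\Big(\dfrac{e^{x(\tau)}}{e^{x(\tau)}-1+\varepsilon}-1\Big)$, so that after integrating term by term between $T^{\dagger}$ and $t$ the constant term contributes $x(t)-x(T^{\dagger})$ and the logarithmic term contributes $\ln|e^{x(t)}-1+\varepsilon|-\ln|e^{x(T^{\dagger})}-1+\varepsilon|$, reproducing the bracketed expression on the right of the statement up to an overall factor discussed below.

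The step to watch most carefully is the coefficient produced by this split, and this is the only real obstacle. The clean identity above carries a factor $\dfrac{1}{1-\varepsilon}$ out front, reflecting that $\dfrac{\mathrm{d}}{\mathrm{d}x}\big(\ln|e^{x}-1+\varepsilon|-x\big)=\dfrac{1-\varepsilon}{e^{x}-1+\varepsilon}$ rather than $\dfrac{1}{e^{x}-1+\varepsilon}$. I would confirm this by the substitution $u=e^{x}$, which turns the integral into $\displaystyle\int\dfrac{\mathrm{d}u}{u(u-1+\varepsilon)}$ and, by partial fractions, gives $\dfrac{1}{1-\varepsilon}\big(\ln|e^{x}-1+\varepsilon|-x\big)$ as the antiderivative, exhibiting the $\dfrac{1}{1-\varepsilon}$ prefactor explicitly. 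Since $\varepsilon$ is taken arbitrarily small in the surrounding argument, this factor tends to $1$; I would therefore read the stated equality as the leading-order ($\varepsilon\to 0$) identity, keeping the exact prefactor in reserve, as the subsequent use of \eqref{mud_var_ln_7} only requires the inequality that survives after letting $\varepsilon\to 0$.
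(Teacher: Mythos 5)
Your computation is correct, and it is in fact more careful than the paper's own treatment. The paper disposes of this lemma with a single line (``the proof runs along the same lines as item (2) of Lemma \ref{lema12}''), but, as your analysis exposes, mimicking that decomposition does not reproduce the stated identity: adding and subtracting $e^{x(\tau)}$ in the numerator now leaves $e^{x(\tau)}-\bigl(e^{x(\tau)}-1+\varepsilon\bigr)=1-\varepsilon$ rather than $1$, so the exact evaluation is
\begin{equation*}
\int_{T^{\dagger}}^{t}\dfrac{\mathrm{d}x(\tau)}{e^{x(\tau)}-1+\varepsilon}
=-\dfrac{1}{1-\varepsilon}\left[\left(x(t)-x(T^{\dagger})\right)-\left(\ln\left|e^{x(t)}-1+\varepsilon\right|-\ln\left|e^{x(T^{\dagger})}-1+\varepsilon\right|\right)\right],
\end{equation*}
exactly as your substitution $u=e^{x}$ with partial fractions confirms. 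The equality as printed in the paper is therefore false for $\varepsilon>0$ and holds only in the limit $\varepsilon\to 0^{+}$; the factor $\tfrac{1}{1-\varepsilon}$ you flag is a genuine erratum in the paper, not a defect in your argument.

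Your judgement that the discrepancy is harmless downstream is also sound, though it is cleaner to carry the exact factor than to invoke a leading-order reading. Since $0<\varepsilon\ll 1$ gives $1-\varepsilon>0$, inequality \eqref{mud_var_ln_7} becomes, after multiplying through, the bracketed expression $\geq -(1-\varepsilon)\int_{T^{\dagger}}^{t}\mathcal{S}(\tau)\,\mathrm{d}\tau$, and the chain of equivalences in item (2) of Lemma \ref{lema16A} goes through verbatim with $h_{\varepsilon}$ redefined using the exponent $-\bigl(x(T^{\dagger})+(1-\varepsilon)\int_{T^{\dagger}}^{t}\mathcal{S}(\tau)\,\mathrm{d}\tau\bigr)$. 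This modified $h_{\varepsilon}$ still tends to $0$ as $t\to+\infty$, because the integral of $\mathcal{S}$ diverges (item (1) of Lemma \ref{lema14A}) and is only rescaled by the positive constant $1-\varepsilon$. The lower bound $x(t)\geq\ln\bigl(\tfrac{1-\varepsilon}{1-h_{\varepsilon}(t)}\bigr)$ and the final squeeze as $\varepsilon\to 0$ are unchanged, so the global-stability conclusion of Section \ref{Section_of_global_stability} survives intact with the corrected constant.
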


\bigbreak
\noindent The proof follows the same lines as item (2) of Lemma \ref{lema12}. For $\varepsilon \gtrsim 0$ and $T^{\dagger} \gg 1$ found in the proof of Lemma \ref{lema10}, define the map $h_\varepsilon: [T^{\dagger}, +\infty) \rightarrow \RR$ as
\begin{eqnarray}
\nonumber h_{\varepsilon}(t) = \left( e^{x(T^{\dagger})} - 1 + \varepsilon \right) \exp{\left\{-\left(    x(T^{\dagger}) + \int_{T^{\dagger}}^t  \mathcal{S}(\tau)\mathrm{d}\tau  \right)\right\}}.
\end{eqnarray}
 
\bigbreak
\begin{lem}\, \medbreak
\label{lema16A}
 
\begin{enumerate}
\item $\dpt \lim_{t\rightarrow +\infty} h_{\varepsilon}(t) =0$

\medskip

\item The following inequalities are equivalent: 

\medskip

\begin{enumerate} 
\item $\dpt \int_{T^{\dagger}}^t \dfrac{\mathrm{d}x(\tau)}{e^{x(\tau)} - 1 + \varepsilon} \geq - \int_{T^{\dagger}}^t  \mathcal{S}(\tau)\mathrm{d}\tau$

\medskip

\item $\dpt x(t) \geq \ln{\left( \dfrac{1-\varepsilon}{1-h_{\varepsilon}(t)} \right)}$
\end{enumerate}
\end{enumerate}
\end{lem}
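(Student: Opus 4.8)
The plan is to mirror the proof of Lemma \ref{lema14A}, since $h_{\varepsilon}$ is obtained from $h$ merely by replacing $e^{x(T^{\dagger})}-1$ with $e^{x(T^{\dagger})}-1+\varepsilon$ in the time-independent prefactor. For item (1), I would rewrite $h_{\varepsilon}(t)=C_{\varepsilon}\,\exp\{-\int_{T^{\dagger}}^t \mathcal{S}(\tau)\,\mathrm{d}\tau\}$ with the constant $C_{\varepsilon}=(e^{x(T^{\dagger})}-1+\varepsilon)\,e^{-x(T^{\dagger})}$, and then invoke the divergence $\int_{T^{\dagger}}^t \mathcal{S}(\tau)\,\mathrm{d}\tau\to+\infty$ as $t\to+\infty$, which was already computed in item (1) of Lemma \ref{lema14A} (that computation rests only on the explicit expression of $\mathcal{S}$ from Lemma \ref{Expression1} and does not involve $\varepsilon$). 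The exponential factor therefore collapses to $0$, giving $\lim_{t\to+\infty} h_{\varepsilon}(t)=0$, exactly as for $h$.

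For item (2), I would start from inequality (a) and insert the integral identity established immediately before the statement, namely $\int_{T^{\dagger}}^t \frac{\mathrm{d}x(\tau)}{e^{x(\tau)}-1+\varepsilon}=-[(x(t)-x(T^{\dagger}))-(\ln|e^{x(t)}-1+\varepsilon|-\ln|e^{x(T^{\dagger})}-1+\varepsilon|)]$. After multiplying by $-1$ and isolating $\ln|e^{x(t)}-1+\varepsilon|$, one is left with the equivalent inequality $\ln|e^{x(t)}-1+\varepsilon|\ge x(t)-x(T^{\dagger})+\ln|e^{x(T^{\dagger})}-1+\varepsilon|-\int_{T^{\dagger}}^t \mathcal{S}(\tau)\,\mathrm{d}\tau$. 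Exponentiating both sides and recognising that $(e^{x(T^{\dagger})}-1+\varepsilon)\,e^{-x(T^{\dagger})}\exp\{-\int_{T^{\dagger}}^t \mathcal{S}(\tau)\,\mathrm{d}\tau\}$ is precisely $h_{\varepsilon}(t)$, the right-hand side becomes $e^{x(t)}h_{\varepsilon}(t)$; this yields $e^{x(t)}-1+\varepsilon\ge e^{x(t)}h_{\varepsilon}(t)$, i.e. $e^{x(t)}(1-h_{\varepsilon}(t))\ge 1-\varepsilon$, and taking logarithms gives (b). Every manipulation in this chain is an equivalence, so the converse implication (from (b) back to (a)) follows by reading the chain in reverse.

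The main obstacle I expect is the bookkeeping of signs. The integral identity carries $\ln|\cdot|$, so to pass from $|e^{x(t)}-1+\varepsilon|$ to $e^{x(t)}-1+\varepsilon$ and to take the final logarithm I must verify that $e^{x(\tau)}-1+\varepsilon$ keeps a strictly positive sign along the relevant range and that $1-h_{\varepsilon}(t)>0$. Both are secured by restricting to $t$ large: item (1) forces $h_{\varepsilon}(t)\to 0$, hence $1-h_{\varepsilon}(t)>0$ eventually, while the upper bound on $x$ coming from Lemma \ref{lema14A}(2) (together with $e^{x(t)}=S(t)/\mathcal{S}(t)>0$) keeps $e^{x(t)}$ close to $1$, so that $e^{x(t)}-1+\varepsilon$ stays near $\varepsilon>0$. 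With these positivity facts in hand, dropping the absolute values is a genuine equivalence and the argument closes.
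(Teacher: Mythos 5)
Your proof is correct and takes essentially the same route as the paper's: item (1) reuses the divergence of $\int_{T^{\dagger}}^{t}\mathcal{S}(\tau)\,\mathrm{d}\tau$ established in Lemma~\ref{lema14A}(1), and item (2) is the paper's exact chain of equivalences (integral identity, exponentiation, recognition of $h_{\varepsilon}(t)$, rearrangement to $e^{x(t)}\left(1-h_{\varepsilon}(t)\right)\geq 1-\varepsilon$, then logarithms). Your sign bookkeeping is extra care the paper omits, but note one slip in it: positivity of $e^{x(t)}-1+\varepsilon$ requires a \emph{lower} bound $x(t)>\ln(1-\varepsilon)$, which the \emph{upper} bound $x(t)\leq-\ln\left(1-h(t)\right)$ from Lemma~\ref{lema14A}(2) does not supply (and the conclusion (b) itself cannot be used without circularity), so that positivity must instead come from the convergence regime $S(t)\to\mathcal{S}(t)$ being considered.
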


\bigbreak
\begin{proof}
\begin{enumerate}
\item The proof runs along the same lines as Lemma \ref{lema12} (1st item).\\

\item The proof is a consequence of   the following chain of equivalences (for $t>T^\dagger $):
\begin{eqnarray}
\nonumber &&\int_{T^{\dagger}}^t \dfrac{\mathrm{d}x(\tau)}{e^{x(\tau)} - 1 + \varepsilon} \geq - \int_{T^{\dagger}}^t  \mathcal{S}(\tau)\mathrm{d}\tau \\
\nonumber \\
\nonumber &\Leftrightarrow&- \left[ \left( x(t) - x(T^{\dagger}) \right) - \left( \ln{| e^{x(t)} - 1 + \varepsilon |}   -  \ln{| e^{x(T^{\dagger})} - 1 + \varepsilon|} \right) \right] \geq - \int_{T^{\dagger}}^t  \mathcal{S}(\tau)\mathrm{d}\tau \\
\nonumber \\
\nonumber &\Leftrightarrow& e^{x(t)} - 1 + \varepsilon \geq \left( e^{x(T^{\dagger})} - 1 + \varepsilon \right) e^{x(t)} e^{-x(T^{\dagger})} e^{- \int_{T^{\dagger}}^t  \mathcal{S}(\tau)\mathrm{d}\tau} \\
\nonumber \\
\nonumber &\Leftrightarrow& 1 - \dfrac{ 1 - \varepsilon}{e^{x(t)}} \geq \left( e^{x(T^{\dagger})} - 1 + \varepsilon \right) \exp{\left\{-\left(    x(T^{\dagger}) + \int_{T^{\dagger}}^t  \mathcal{S}(\tau)\mathrm{d}\tau  \right)\right\}} \\
\nonumber \\
\nonumber &\Leftrightarrow& e^{-x(t)} \leq \dfrac{1- \left( e^{x(T^{\dagger})} - 1 + \varepsilon \right) \exp{\left\{-\left(    x(T^{\dagger}) + \int_{T^{\dagger}}^t  \mathcal{S}(\tau)\mathrm{d}\tau  \right)\right\}}}{1-\varepsilon} \\
\nonumber \\
\nonumber &\Leftrightarrow&  \dpt x(t) \geq \ln{\left( \dfrac{1-\varepsilon}{1-h_{\varepsilon}(t)} \right)}.
\end{eqnarray}
\end{enumerate}
\end{proof}

 For a fixed $\varepsilon>0$, combining Lemmas \ref{lema14A} and \ref{lema16A}, there exists $t>T^\dagger(\varepsilon)$ such that
$$
\ln{\left( \dfrac{1-\varepsilon}{1-h_{\varepsilon}(t)} \right)}\leq x(t)  \leq - \ln{\left( 1-h(t) \right)}.
$$
  Since 
$$
\lim_{t\rightarrow+\infty} \ln{\left( \dfrac{1-\varepsilon}{1-h_{\varepsilon}(t)} \right)}=\ln(1-\varepsilon) \qquad \text{and} \qquad   \lim_{t\rightarrow+\infty} - \ln{\left( 1-h(t) \right)}=0,
$$
  then, by the Squeezing Theorem, we get

$$\dpt \lim_{\varepsilon\rightarrow 0} \lim_{t\rightarrow +\infty} x(t)  =\dpt \lim_{\varepsilon\rightarrow 0} \ln(1-\varepsilon) =0.$$

  Using the change of variable \eqref{Change11}, we conclude that $S(t) \rightarrow \mathcal{S}$ as $t \rightarrow \infty$, in the topology of pointwise convergence, {\it i.e.}  
 ${\displaystyle \lim _{t\to+ \infty} \sup{\{dist(S(t),\mathcal{S}(t)): t > T^{\dagger} \}}=0}$.
The proof  of (3) of Theorem \ref{thA} is now complete.

\section{Proof of (4) of Theorem \ref{thA}}
\label{section_of_permanence}

In this section, we prove that system \eqref{modelo3} is permanent,  that is, there are positive constants $c_1, c_2$, $C_1, C_2,T_0>0$ such that for all solutions $\left( S(t),I(t) \right)$ with initial conditions
$S_0 > 0$, $I_0 > 0$, we have
$$ c_1 \leq S(t) \leq C_1 \quad \text{and} \quad c_2 \leq I(t) \leq C_2, $$ for all $t \geq T_0$.
Constants $C_1$ and $C_2$ come from Lemma \ref{region_}. Then we may take (for instance)
$$
C_1=C_2= \dfrac{A (\sigma + g + A)}{\sigma + g}>0.
$$

\noindent In the region {\Large \ding{175}} defined by $R_p>1$, the constant $c_1$ comes from the time average \eqref{Rp_eq}
$$1<\mathcal{R}_p=\dpt \dfrac{\beta_0}{\sigma + g}\dfrac{1}{T} \int_0^T  \mathcal{S}(t) \, \mathrm{d}t. $$
 
  We only need to prove that there exists a constant $c_2 > 0$ such that $I(t) \geq c_2$ for $t$ large enough. This assumption will be proved by contradiction, {\it i.e.\/} we assume that for all $c_2 > 0$, we have (see Figure \ref{case1case2}): \\

 \begin{enumerate}
 \item  $I(t) < c_2$ for all $t \geq t_1$ or \\
 \medskip
 \item  $I(t)<c_2$ at infinitely many subintervals of $[t_1, +\infty)$. Let $t^{\star} = \inf_{t > t_1}\{I(t) < c_2 \}$. There are two possibilities for such a $t^\star$:
 \smallskip
 \begin{enumerate}
 \item $t^\star= n_1 T$, for $n_1\in \NN$ and 
 \smallskip
 \item $t^\star \neq  n_1 T$, for $n_1\in \NN$.\\
 \end{enumerate}

 \begin{figure*}[ht!]
\includegraphics[width=0.96 \textwidth]{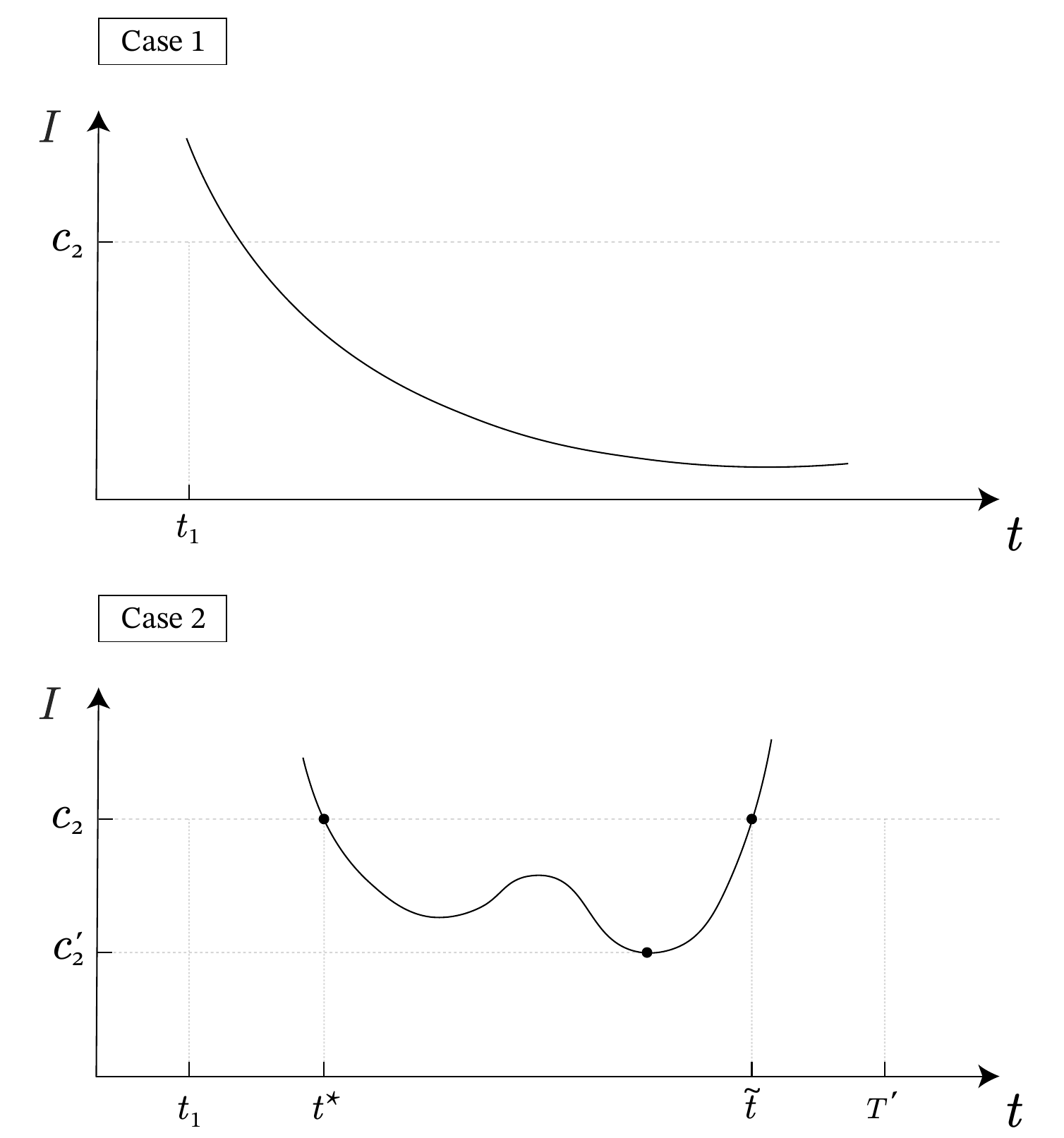}
\caption{\small Illustration of Cases 1 and  (2a).}
\label{case1case2}
\end{figure*}
\end{enumerate}

\noindent \textbf{Case (1)}  Assuming that $I(t) < c_2$ for all $t \geq t_1$, by including the $-\beta_0 c_2 S(t)<0$ in \eqref{s_wt_i} we get:
 \begin{equation*}
\label{permanence_model}
\begin{array}{lcl}
\begin{cases}
\bigskip
&\dot{S} \geq S(t)(A-S(t)) - \beta_0 c_2 S(t), \quad \,  \qquad t \neq nT  \\
&S(t^+) \geq (1-p) S(t), \qquad \qquad \quad \,\,\,\,\, \qquad t = nT .
\end{cases}
\end{array}
\end{equation*}

 Now, let $z(t)$ be the solution of the following system:
 \begin{equation}
\label{zt_equation}
\begin{array}{lcl}
\begin{cases}
\bigskip
&\dot{z}(t) = z(t)(A-z(t)) - \beta_0 c_2 z(t), \qquad \,  \qquad t \neq nT  \\
& z(t^+) = (1-p) z(t), \qquad \qquad \qquad \quad \,\,\,\,  \qquad t = nT .
\end{cases}
\end{array}
\end{equation}

\noindent Solving \eqref{zt_equation} as in \eqref{s_wt_i}, for $c_2>0$ arbitrarily small, we conclude that it has a periodic solution given explicitly by

\begin{eqnarray*}
\nonumber  \mathcal{Z}(t) &=& \dfrac{K}{1+\dfrac{p}{K \Big( (1-p) - e^{-KT} \Big)} K e^{-K(t-nT)}} = \dfrac{K \Big( (1-p) - e^{-KT} \Big)}{(1-p) - e^{-KT} + pe^{-K(t-nT)}} ,
\nonumber \\
\nonumber \\
\nonumber &=& \dfrac{K}{1+\dfrac{p}{ (1-p) - e^{-KT}}  e^{-K(t-nT)}} \\
\nonumber \\
\nonumber \\
\end{eqnarray*}
 where $ nT \leq t < (n+1)T$, $t>t_1$, $n\in \NN$ and 
$K \coloneqq -\beta_0 c_2 + A $.

  Now it is easy to conclude that $S(t) \geq z(t)$, $S(0^+) \geq z(0^+)$, and $z(t) \to \mathcal{Z}(t)$ in the pointwise convergence topology, where $\mathcal{Z}$ is the periodic solution of \eqref{zt_equation}.
Therefore, there exist $\varepsilon_1 > 0$ (arbitrarily small) and $t_2>t_1$ such that $S(t) \geq z(t) > \mathcal{Z}(t) - \varepsilon_1$; from \eqref{modelo3} and for $t > t_2$ we may write\\
\begin{equation}
\label{I_t_permanence}
\begin{array}{lcl}
\begin{cases}
 &\dot{I}(t) \geq \Big[ \beta_0 \left(\mathcal{Z}(t) - \varepsilon_1\right) - (\sigma + g) \Big] I(t), \quad \,  \qquad t \neq nT  \\
&I(nT^+) = I(nT).\\
\end{cases}
\end{array}
\end{equation}

 Let $N \in \NN$ and $NT \geq t_2$. Integrating \eqref{I_t_permanence} between $(nT, (n+1)T]$, $n \geq N$, we have
 $ I\big((n+1)T \big) \geq I(nT) \gamma $, 
where $$\displaystyle \gamma = \exp{ \left\{  \int_{nT}^{(n+1)T} \beta_0 \mathcal{Z}(t) \, \mathrm{d}t - \big( \beta_0 \varepsilon_1 + (\sigma + g) \big) T \right\}}.$$ \\ 
Using induction over $N\in \NN$ we get
\begin{equation}
\label{eqgama}
I\big((N+n)T \big) \geq I(NT) \, \gamma^n .
\end{equation}

\noindent \textbf{Claim:} If  $\mathcal{R}_p > 1$, then $\gamma>1$.
 

 \begin{proof} The proof follows from the chain of equivalences:
 \begin{eqnarray*}
\nonumber  &&\mathcal{R}_p > 1 \\
\nonumber \\
 \nonumber &\overset{\eqref{RpAND_Sc} \, \text{and}\,  \varepsilon_1 \gtrsim 0}{\Leftrightarrow}& \dfrac{\beta_0}{\sigma + g} \dfrac{1}{T} \int_{nT}^{(n+1)T} ( \mathcal{Z}(t) - \varepsilon_1) \, \mathrm{d}t > 1  \\
\nonumber \\
\nonumber &\Leftrightarrow& \exp{\Big\{  \int_{nT}^{(n+1)T} \beta_0 \mathcal{Z}(t) \, \mathrm{d}t - \big( \beta_0 \varepsilon_1 + (\sigma + g) \big) T \Big\}} > 1 \\
\nonumber \\
 \nonumber &\Leftrightarrow& \gamma > 1 .
\end{eqnarray*} 
\end{proof}

  Since  $\gamma > 1$, we see that $0<I(NT) \, \gamma^n \to \infty$ as $n \to \infty$, which  is a  contradiction by Lemma \ref{region_}. So, there exist $c_2,T_0 > 0$ such that $I(t) > c_2$ for all $t \geq T_0$. \\

\noindent \textbf{Case (2a)} Let $t^{\star} = n_1 T$, $n_1 \in \NN$. 
Then, $I(t) \geq c_2$ for $t \in [t_1, t^{\star}]$ and $I({t^{\star}}) = c_2$ and $I$ is decreasing. By Lemma \ref{infected_decreases}, we have
\begin{eqnarray}
\nonumber && \dot{I}(t) < 0  \Leftrightarrow \beta_0 S(t) < \sigma + g  \overset{0 < c_1 \leq S(t)}{\Rightarrow}  \beta_0 c_1< \sigma + g .
\end{eqnarray} 
\noindent Hence, we can choose $n_2, n_3 \in \NN$ such that $t^{\star} + n_2 T>t_1$ and 
\begin{eqnarray*}
\label{eq_4.22}
\gamma^{n_3}  \exp{\Big\{  (\beta_0 c_1 - \sigma - g) n_2 T \Big\}} > \gamma^{n_3}  \exp{\Big\{  (\beta_0 c_1 - \sigma - g) (n_2 + 1) T \Big\}} > 1 .
\end{eqnarray*} 
and define $T' = n_2 T + n_3 T$. \\

\noindent \textbf{Claim:} There exists $t_2 \in (t^{\star}, t^{\star} + T']$ such that $I(t_2) > c_2$.

\begin{proof} Let us assume that this is not true, {\it i.e.} that there is no such $t_2$.

Since $z(t) \to \mathcal{Z}(t)$ as $t \to \infty$ (in the pointwise convergence topology), as above, we may write $\mathcal{Z}(t) - \varepsilon_1 < z(t) \leq S(t)$, for $t^{\star} + n_2 T \leq t \leq t^{\star} + T'$. From \eqref{I_t_permanence} and for $t^{\star} + n_2 T \leq t \leq t^{\star} + T'$, we have (see equation \eqref{eqgama})

$$
I(t^{\star} + T') \geq I(t^{\star} + n_2 T) \gamma^{n_3}.
$$

From system \eqref{modelo3}, we get
\begin{equation}
\label{eq_4.23}
\begin{array}{lcl}
\begin{cases}
\bigskip
&\dot{I}(t) \geq \left(( \beta_0 c_1 - (\sigma + g) \right) I(t), \quad \quad \,\,\,\,  \qquad t \neq nT,  \\
&I(t^+) = I(t), \qquad \qquad \qquad \qquad \quad \,\,\, \qquad t = nT.
\end{cases}
\end{array}
\end{equation}
for $t \in [t^{\star}, t^{\star} + n_2 T]$. Integrating \eqref{eq_4.23} between $t^{\star}$ and $t^{\star} + n_2 T$, we have:

\begin{eqnarray*}
\label{eq_4.24}
\nonumber  \int_{t^{\star}}^{t^{\star} + n_2 T} \dfrac{\mathrm{d}I(t)}{I(t)} &\geq&  \int_{t^{\star}}^{t^{\star} + n_2 T}\big( \beta_0 c_1  - (\sigma + g) \big) \, \mathrm{d}t \\
\nonumber \\
\nonumber \Leftrightarrow I(t^{\star} + n_2T) &\geq& I(t^{\star}) \exp{\big\{\big(\beta_0 c_1 - (\sigma + g)\big) n_2 T \big\}} \\
\nonumber \\
\Leftrightarrow I(t^{\star} + n_2T) &\geq& c_2 \exp{\big\{\left[\beta_0 c_1 - (\sigma + g)\right]n_2 T \big\}} ,
\end{eqnarray*}
leading to
$$
I(t^{\star} + T') \geq c_2\exp{\big\{\left[\beta_0 c_1 - (\sigma + g)\right]n_2 T \big\}} \gamma^{n_3} > c_2 ,
$$
 which is a contradiction, since we have assumed that no $t \in (t^{\star}, t^{\star} + T']$ exists such that $I(t) > c_2$.\\
 \end{proof}

 Let $\tilde{t} = \inf_{t>t^{\star}}\{I(t) > c_2\}$.
Then, for $t \in (t^{\star}, \tilde{t})$, $I(t) \leq c_2$ and $I(\tilde{t}) = c_2$, since $I(t)$ is continuous and $I(t^+) = I(t)$ when $t = nT$. 
For $t \in (t^{\star}, \tilde{t})$, suppose $$t \in (t^{\star} + (k-1)T, t^{\star}+kT],\quad k \in \NN\quad  \text{and} \quad k \leq n_2 + n_3.$$ 
Therefore, from \eqref{eq_4.23} we have
\begin{eqnarray*}
\nonumber I(t) &\geq& I(t^{\star}) \exp{\big\{(k-1) \left(\beta_0 c_1 - (\sigma + g)\right) T \big\}} \exp{\big\{ \left(\beta_0 c_1 - (\sigma + g)\right)\left( t - (t^{\star} + (k-1)T) \right) \big\}}\\
\nonumber \\
\nonumber &\geq& c_2  \exp{\big\{k \left(\beta_0 c_1 - (\sigma + g)\right) T \big\}} \\
\nonumber \\
&\geq& c_2 \exp{\big\{(n_2 + n_3) \left(\beta_0 c_1 - (\sigma + g)\right) T \big\}} .\\
\end{eqnarray*}

  Let $c_2' = c_2 \exp{\{ (n_2 + n_3) (\beta_0 c_1 - (\sigma + g)) T \}}<c_2$.  This lower bound does not depend  neither  on $t^\star$ nor $\tilde{t}$.
Hence, we have $I(t) \geq c_2'$ for $t \in (t^{\star}, \tilde{t})$ and then for all $t>t_1$. For $t > \tilde{t}$, the same argument can be extended to $+\infty$ since $I(\tilde{t}) \geq c_2$ and $c_2'$ does not depend on the interval. This is a contradiction.
Hence, there exist $c_2,T_0 > 0$ such that $I(t) > c_2$ for all $t \geq T_0$. The proof of \textbf{Case (2b)} is entirely analogous and it is left to the reader. \\

 The Poincar\'e-Bendixson theorem for Impulsive planar flows \cite[Theorem 3.9]{Bonotto2008} says that the $\omega$--limit of \eqref{modelo3}  is  an equilibrium, a periodic solution or a union of saddles that are heteroclinically connected. 
The only equilibrium of  \eqref{modelo3} is the origin.  
In order to allow  endemic {\it $T$-periodic} solutions for $F_I$, then $ \dpt  \dfrac{1}{T} \int_0^T  S(t) \, \mathrm{d}t = S_c $. In this case, we know that there is an implicit (positive) map \,$\mathcal{I}\equiv \mathcal{I}(t)$, where $t\in \RR_0^+$.
Since there are no more candidates for $\omega$--limit sets, we have in the topological closure of the first quadrant the following limit sets:
$$
(0,0), \, \,   ( \mathcal{S}, 0) \, \, \text{and} \, \, (\mathcal{S}, \mathcal{I}).
$$
Since $(0,0)$ is repulsive (by Proposition \ref{stable_periodic_solution}), $( \mathcal{S}, 0)$ is a saddle (attracting in the first component and repelling in the second), the solutions should converge to the endemic periodic solution $(\mathcal{S}, \mathcal{I})$ whose explicit solution is not known.

\begin{figure*}[ht!]
\includegraphics[width=0.75 \textwidth]{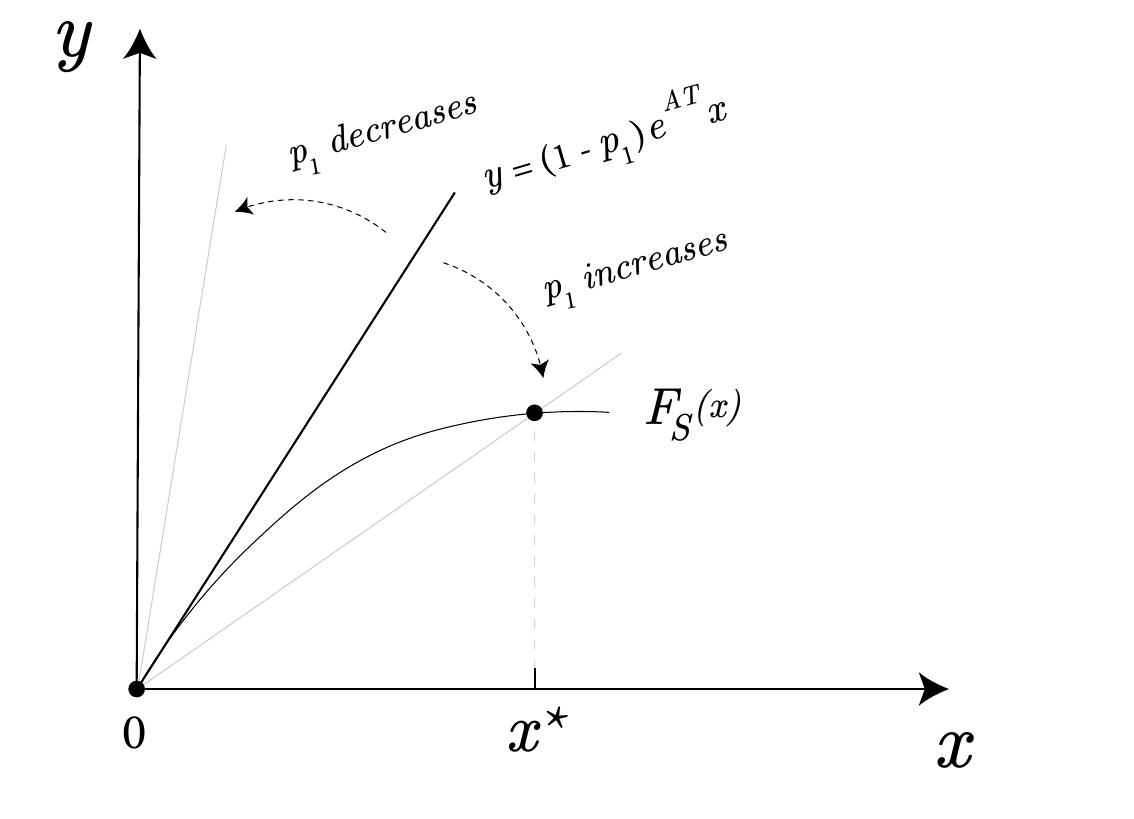}
\caption{\small Illustration of the existence of a degenerate saddle-node bifurcation at $p=p_1(T)$, for $T>0$ fixed.}
\label{fixed_x}
\end{figure*}

\section{Proof of (6) of Theorem \ref{thA}}
\label{section:bif}
For $x\geq 0$, it is easy to check that $\dpt F_S'(x)=\frac{P(x)}{Q^2(x)}$, where

$$
P(x)= A(1-p) e^{AT}(x(e^{AT}-1)+A)-Ax(1-p)e^{AT}(e^{AT}-1)
$$

\noindent and
$$
Q(x)= x(e^{AT}-1)+A.
$$

\noindent In particular, we have $F'_S(0)= (1-p)e^{AT}$. Since

$$
(1-p)e^{AT}=1 \quad \Leftrightarrow \quad p=p_1(T),
$$

\noindent then  we conclude that $p_1$ corresponds to a saddle-node bifurcation of $F_S$ at $x=0$. Indeed, if $p>p_1$, then the map $F_S$ does not have fixed points besides $x=0$; if $p<p_1$, then one extra fixed point emerges, as shown in Figure \ref{fixed_x}. In terms of $p_2$, if $p>p_2$, then the periodic solution $(\mathcal{S},0)$ changes its stability because the Floquet multiplier $\lambda_2$ (cf. \eqref{eigenvalue2}) crosses the unit circle; this is why we have a transcritical bifurcation at $p=p_2$.

\section{Proof of (7)  of Theorem \ref{thA}}
\label{s:proof(7)}
  Item (7) is proved in Lemmas \ref{Lema7} and \ref{e_qui_va_lence} of Section \ref{section:Rp}.

 \section{Proof of Corollary \ref{cor1}} \label{proof_cor_1}
 For $\gamma, \omega>0$  and $A>S_c$, there is a hyperbolic  {\it $T$-periodic} solution $(\mathcal{S}, 0)$ whose existence does not depend on $\beta_\gamma$. Therefore, it exists if $p<p_1(T)$ by Lemma \ref{estroboscopica_S}. However, the region of the phase space where it is stable depends on $\gamma$. Indeed,
\begin{eqnarray}
\nonumber  && \lambda_2<1\\
\nonumber && \\
\nonumber &\overset{\eqref{eigenvalue2}, \text{ adapted}}{\Leftrightarrow}& \exp{ \left\{  \dpt \int_0^T \beta_\gamma(t)\mathcal{S}(t) \, \mathrm{d}t - \left(\sigma + g \right)T \right\} } < 1 \\
\nonumber && \\
\nonumber  &\Leftrightarrow&  \dpt \int_0^T\beta_0 \left(1+\gamma \Psi (\omega t)\right) \mathcal{S}(t) \, \mathrm{d}t  - \left(\sigma + g \right)T < 0 \\
\nonumber && \\
\nonumber && \\
\nonumber  &\Leftrightarrow&  \dpt \frac{1}{\left(\sigma + g \right)T}\int_0^T\beta_0  \mathcal{S}(t) \, \mathrm{d}t + \frac{1}{\left(\sigma + g \right)T}\int_0^T    {\beta_0} \gamma \Psi (\omega t) \mathcal{S}(t) \, \mathrm{d}t < 1  \\
\nonumber  &\Leftrightarrow& \dpt \frac{\beta_0}{\left(\sigma + g \right)T}\int_0^T  \mathcal{S}(t) \, \mathrm{d}t + \frac{\beta_0}{\left(\sigma + g \right)T}\int_0^T \gamma \Psi (\omega t) \mathcal{S}(t) \, \mathrm{d}t < 1 \\
\nonumber \\
\nonumber  &\overset{\eqref{S_c(def)}}{\Leftrightarrow}&  \dpt \frac{1}{S_c T}\int_0^T  \mathcal{S}(t) \, \mathrm{d}t + \frac{1}{S_c T}\int_0^T \gamma \Psi (\omega t) \mathcal{S}(t) \, \mathrm{d}t < 1 \\
\nonumber  &\Leftrightarrow& \dpt \left[\ln{(1-p)} + AT \right] + \gamma \int_0^T  \Psi (\omega t) \mathcal{S}(t) \, \mathrm{d}t < S_c T \\
\nonumber \\
\nonumber  &\Leftrightarrow& p> 1 - \exp{\left\{- \left[(A - S_c)T + \gamma \int_0^T  \Psi (\omega t) \mathcal{S}(t) \, \mathrm{d}t \right]\right\}} \\
\nonumber && \\
\nonumber  &\Leftrightarrow& p>p_2^{\text{seas}}(T) .
\end{eqnarray}
 
\section{Proof of Theorem \ref{th: mainB}}
\label{SA_lab}
 The proof follows from   the item (4) of  Theorem \ref{thA} combined with  \cite{Anishchenko1993, Anishchenko2007} and Section 3.2 of  \cite{WangYoung2003}(\footnote{See the comment before Theorem 2 of \cite{WangYoung2003}, where the authors refer impulsive differential equations.}),    taking into account the following considerations:

\begin{itemize}
\item $\omega \in \RR^+$  plays the role of ``shear'' of \cite{WangYoung2003};  
\medskip
\item the term $\beta_\gamma(t) = \beta_0\left(1+\gamma \Psi(\omega t)\right) > 0$ may be seen as the radial ``kick'' of \eqref{modelo2} that is periodic in time; 
\medskip
\item  the non-autonomous periodic forcing of \eqref{modelo2}  has nondegenerate critical points (by \textbf{(C4)}) -- this avoids constant maps;
\medskip
\item the endemic periodic solution $(\mathcal{S}, \mathcal{I})$ of (4) of Theorem \ref{thA} is globally attracting  for \eqref{modelo3}.\\
\end{itemize}

For $\gamma>0$,  equation \eqref{modeloSIR} is equivalent to  

 \begin{equation}
 \begin{array}{lcl}
 \label{gamma>0}
\dot{x} = f_\gamma(x) \quad \Leftrightarrow \quad
\begin{cases}
\medskip
&\dot{S} = S(A-S) - \beta_\gamma {(t)} I S  \\
\medskip
&\dot{I} =  \beta_\gamma {(t)} IS - \left(\sigma+g\right) I  ,  \qquad \qquad \qquad t \neq nT  \\
\bigskip
&\dot{\theta} = \omega \pmod{2\pi}\\
\medskip
&S(nT) = (1-p) S(nT^-)  \\
\medskip
&I(nT) = I(nT^-)
\end{cases}
\end{array}
\end{equation}

\medskip

\noindent whose flow lies in $(\RR_0^+)^2 \times \mathbf{S}^1$, where $\mathbf{S}^1\equiv \RR \mod{\pi/\omega}$.  Since the kicks are radial, they do not affect the $\theta$--coordinate. The following argument follows from \cite{Anishchenko1993, Anishchenko2007} and Section 3.2 of  \cite{WangYoung2002}.

Using item (4) of  Theorem \ref{thA} applied to the amplitude component of \eqref{gamma>0}, one knows that the {\it $\omega$-limit} of Lebesgue almost all solutions of \eqref{gamma>0} is a strict subset of a two-dimensional attracting torus $\mathcal{T}_0$ (normally hyperbolic manifold). The torus is  ergodic when $T$ is not commensurable with $\omega/\tau$. Furthermore, for a dense set $\Delta$ of pairs $(T, \omega)\in [T_2, +\infty)\times \RR^+$ such that $T=k\tau/\omega$, one knows that this normally hyperbolic manifold is foliated by periodic solutions, as depicted on the  left-hand side of Figure \ref{XY_im}. In the terminology of Herman  \cite{Herman1977}, the set $\Delta$ corresponds to orbits with rational rotation number. 

Let $\Sigma$ be a cross-section to $\mathcal{T}_0$ where the first return is well defined.  It is easy to observe that the set $\Sigma \cap \mathcal{T}_0$ is part of a circle. If $(T, \omega) \in \Delta$ and $\gamma=0$,   there is at least one pair of periodic solutions  in $\mathcal{T}_0 $ which are heteroclinically connected, say $q_1$  (saddle) and $q_2$  (sink). The rotation number remains  constant within a  synchronization zone (valid for $\gamma>0$), also known as Arnold's tongue  \cite{Shilnikov2Turaev2004}. 

Since $\mathcal{T}_0$ is a normally hyperbolic manifold, then for $\gamma\gtrsim 0$, there is a manifold $\mathcal{T}_\gamma$ diffeomorphic to $\mathcal{T}_0$, which is still attracting; as $\gamma$  increases  further, the set $\Sigma \cap \mathcal{T}_\gamma$  becomes ``non-smooth'' and three generic\footnote{Valid in a residual set within the set of one-parameter families $(f_\gamma)_\gamma$   periodically perturbed by maps satisfying \textbf{(C4).}} scenarios may occur:
\begin{itemize}
\item[(A)]  the circle  loses its smoothness (near $q_2$) when a pair of multipliers of the cycle becomes complex (non-real) or one of the multipliers is negative. At the moment of bifurcation, the length of an invariant circle becomes infinite and the torus is destroyed. The transition to chaos can come either from a period-doubling bifurcation cascade or via the breakdown of a torus occurring near $q_2$  (route A of \cite[pp.124]{Anishchenko2007});
\item[(B)] homo or heteroclinic tangle of the dissipative saddle $q_1$ (route B of \cite[pp.124]{Anishchenko2007});
\item[(C)] distortion of the unstable manifold near a non-hyperbolic saddle-node; the torus becomes non-smooth  (route C of \cite[pp.124]{Anishchenko2007}).
\end{itemize}

Following \cite{Anishchenko2007}, the three mechanisms of torus destruction leads to a (non-hyperbolic) topological horseshoe-type map with a smooth bend.  They do not cause the absorbing area to change abruptly and thus represent the bifurcation mechanism of a soft transition to chaos. 
The torus destruction line in the two control parameter plane $(\omega, \gamma)$ is characterised by a complex structure \cite{Anishchenko1993, Anishchenko2007, Rodrigues_2022_JDDE}.  There are small regions (in terms of measure)  inside the resonance wedges where chaotic trajectories are observable: they correspond to strange attractors of H\'enon type and are associated with historic behavior. Other stable points with large period exist  as a consequence of the Newhouse phenomena. 
For a better understanding of Route (B), see Figure \ref{XY_im}. Compare also with Figure 2.10 of \cite{Anishchenko2007}.

\begin{figure*}[ht!]
\includegraphics[width=0.55 \textwidth]{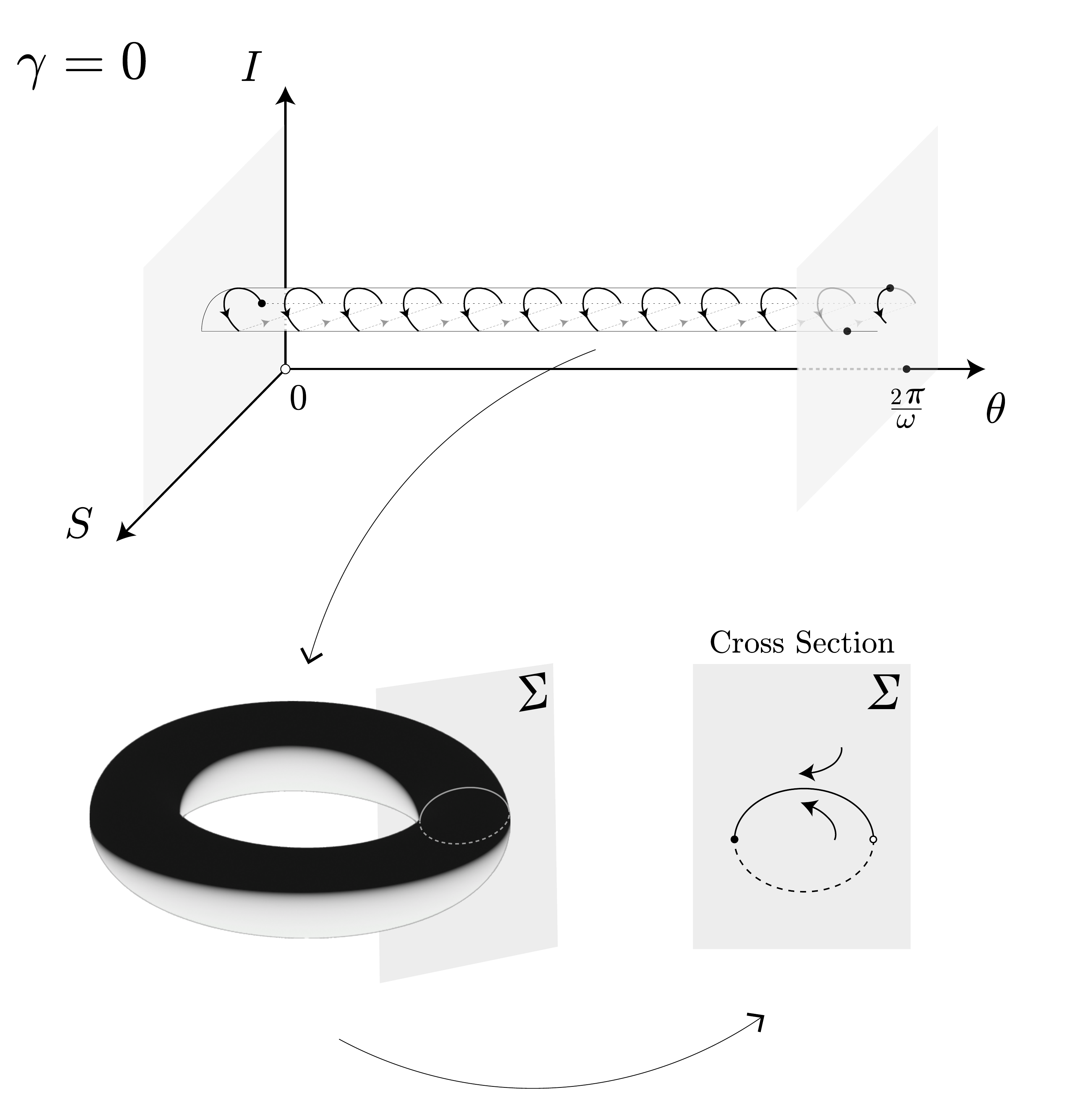}\includegraphics[width=0.50 \textwidth]{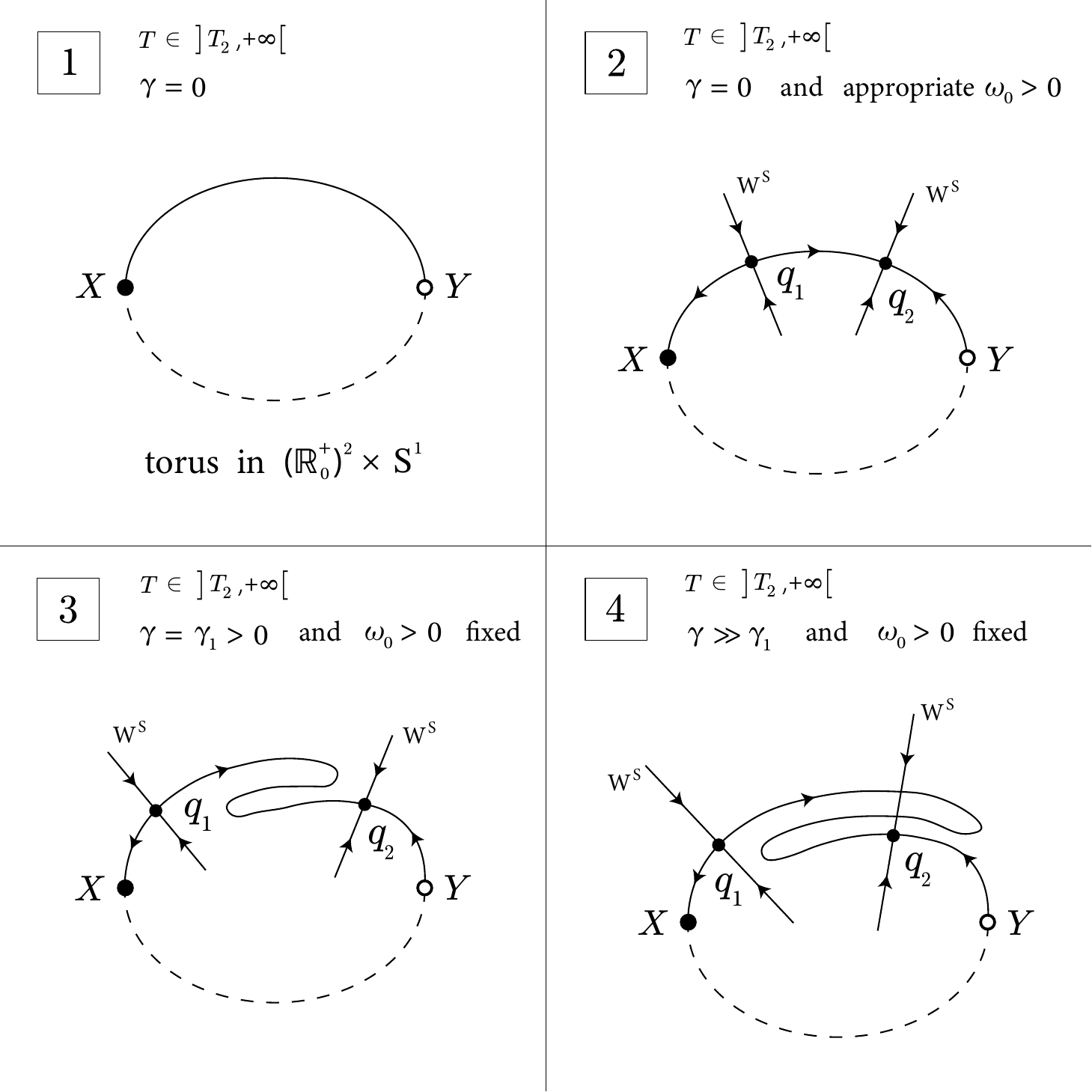}
\caption{\small {Sketch for the proof of Theorem \ref{th: mainB}: \bf (1).} Torus bifurcation for $\gamma = 0$. {\bf (2).} For $\gamma = 0$ and an appropriate $\omega \in \RR^+$, we observe the emergence of periodic solutions represented by $q_1$ and $q_2$ on $\mathcal{T}_0$. {\bf (3 and 4).} As $\gamma$ increases, the intersection $\Sigma \cap \mathcal{T}_0$ starts to lose its smoothness. In {\bf (4)}, the unstable manifold $W^u(q_1)$ intersects the stable manifold $W^s(q_2)$, leading to the formation of homoclinic tangles that indicate the emergence of horseshoes (chaos) -- compare with Figure 2.10 of \cite{Anishchenko2007}.}
\label{XY_im}
\end{figure*}


\section{Numerics} \label{s:numerics}

In this section we perform  some numerical simulations to illustrate the contents of  Theorems \ref{thA} and  \ref{th: mainB}. All simulations were performed via MATLAB\_R2018a software.

\medbreak  

\textbf{Figure \ref{art_}: }Numerical simulation of the five scenarios of system \eqref{modelo3} illustrated in the {bifurcation diagram $(T,p)$} in Figure \ref{Bif_Diag_ThA} with initial condition $(S_0, I_0) = (0.5, 0.4)$ described by Theorem \ref{thA}. The figure shows the behavior of the {\it Susceptible} and {\it Infectious} over time for different values of $p$ and corresponding phase space. Parameter values: $A=1$, $\beta_0 = 0.9$, $\sigma=0.2$, $g=0.5$, $T=4$ and $p \in [0,1]$. The green line represents the disease-free non-trivial periodic solution $(\mathcal{S},0)$, and the red line represents the endemic periodic solution $(\mathcal{S},\mathcal{I})$.
 
\medbreak  
 
\textbf{Figure \ref{SI_R3}:} Projection of the solution of \eqref{gamma>0}  with initial condition $(S_0, I_0, \theta_0) = (0.5, 0.4, 0)$ for different values of $\gamma$. The parameter values are strategically chosen in order to have the dynamics of Region   {\Large \ding{174}} of Figure \ref{Bif_Diag_ThA}: $A = 1$, $\beta_0 = 0.2$, $\sigma = 0.05$, $g = 0.02$, $T = 1$, $\omega = 0.1$ and $p = 0.5$. The dynamics of $S$ and $I$ show that the system tends towards the disease-free periodic solution $(\mathcal{S},0)$. As the amplitude of the seasonal variation $\gamma$ increases, the disease-free periodic solution starts to deform.
 
\medbreak
 
\textbf{Figure \ref{SItheta_R3}:}   Numerical simulation of the phase space $(S,I)$, $(S,I,\theta)$, and respective cross-section ($\theta = 2$) for three different values of $\gamma$ of \eqref{gamma>0}. Parameter values used in this simulation: $A = 1$, $T = 4$, $\beta_0 = 2$, $\omega = 6$, $\sigma = 0.2$, $g = 0.5$ and $p = 0.4$ with initial condition $(S_0, I_0, \theta_0) = (0.4074, 0.2645, 0)$. This simulation corresponds to Region  {\Large \ding{175}}, where $\mathcal{R}_p > 1$. For this initial condition and $\gamma=4.89$, there is a positive Lyapunov exponent (0.13).

\begin{figure*}[ht!]
\includegraphics[width=1.12 \textwidth]{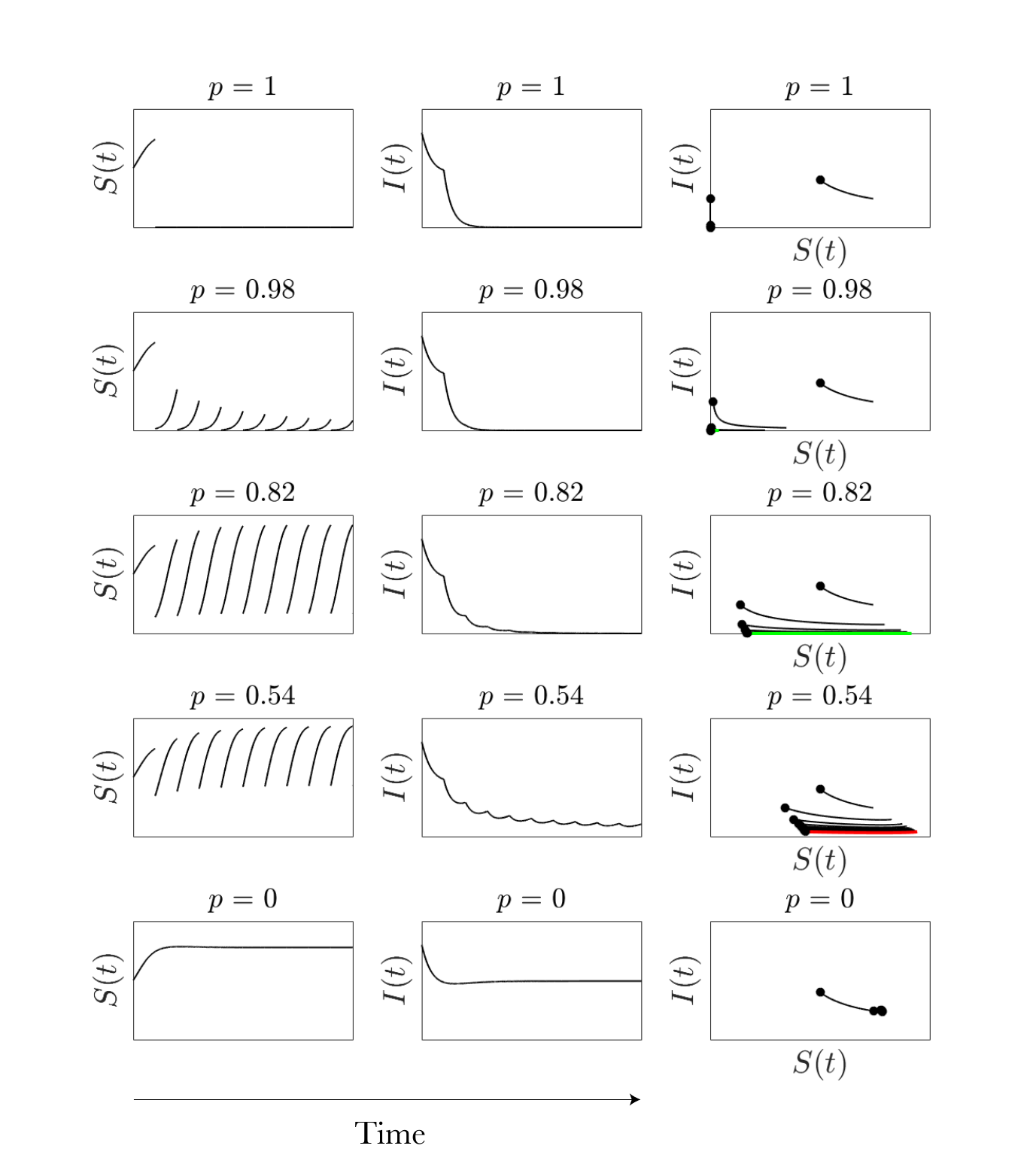}
\caption{\small Numerical simulation of the five scenarios of system \eqref{modelo3} illustrated in the {bifurcation diagram $(T,p)$} of Figure \ref{Bif_Diag_ThA} with initial condition $(S_0, I_0) = (0.5, 0.4)$. The figure shows the behavior of the {\it Susceptible} and {\it Infectious} over time for different values of $p$ and corresponding phase space. Parameter values: $A=1$, $\beta_0 = 0.9$, $\sigma=0.2$, $g=0.5$, $T=4$ and $p \in [0,1]$. The green line represents the disease-free non-trivial periodic solution $(\mathcal{S},0)$, and the red line represents the endemic periodic solution $(\mathcal{S},\mathcal{I})$.}
\label{art_}
\end{figure*}

\begin{figure*}[ht!]
\includegraphics[width=0.98 \textwidth]{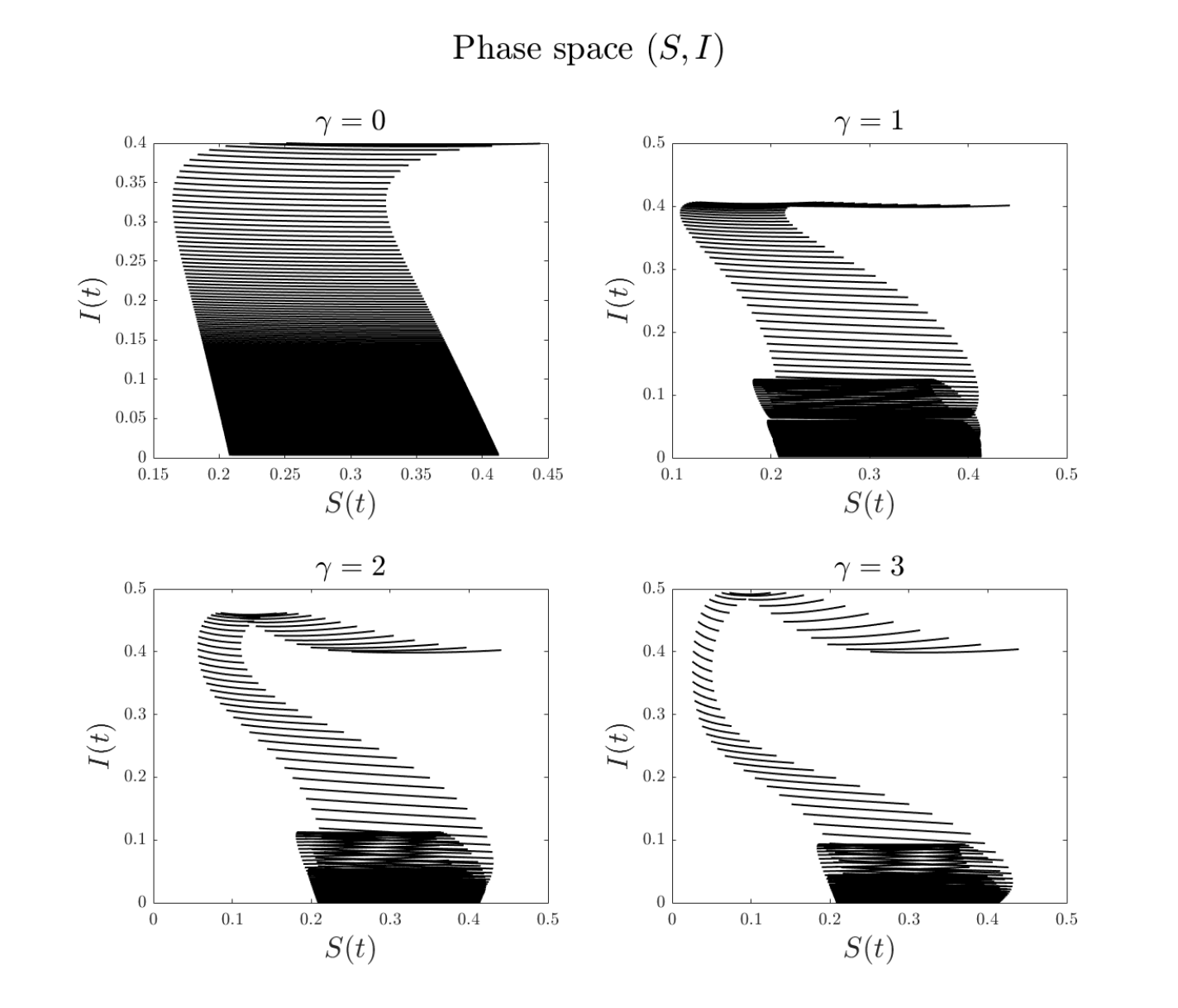}
\caption{\small  Projection of the solution of \eqref{gamma>0} with initial condition $(S_0, I_0, \theta_0) = (0.5, 0.4, 0)$ for different values of $\gamma$. The parameter values are strategically chosen in order to have the dynamics of Region   {\Large \ding{174}} of Figure \ref{Bif_Diag_ThA}: $A = 1$, $\beta_0 = 0.2$, $\sigma = 0.05$, $g = 0.02$, $T = 1$, $\omega = 0.1$ and $p = 0.5$. The dynamics of $S$ and $I$ show that the system tends towards the disease-free periodic solution $(\mathcal{S},0)$.} 
\label{SI_R3}
\end{figure*}

\begin{figure*}[ht!]
\includegraphics[width=1\textwidth]{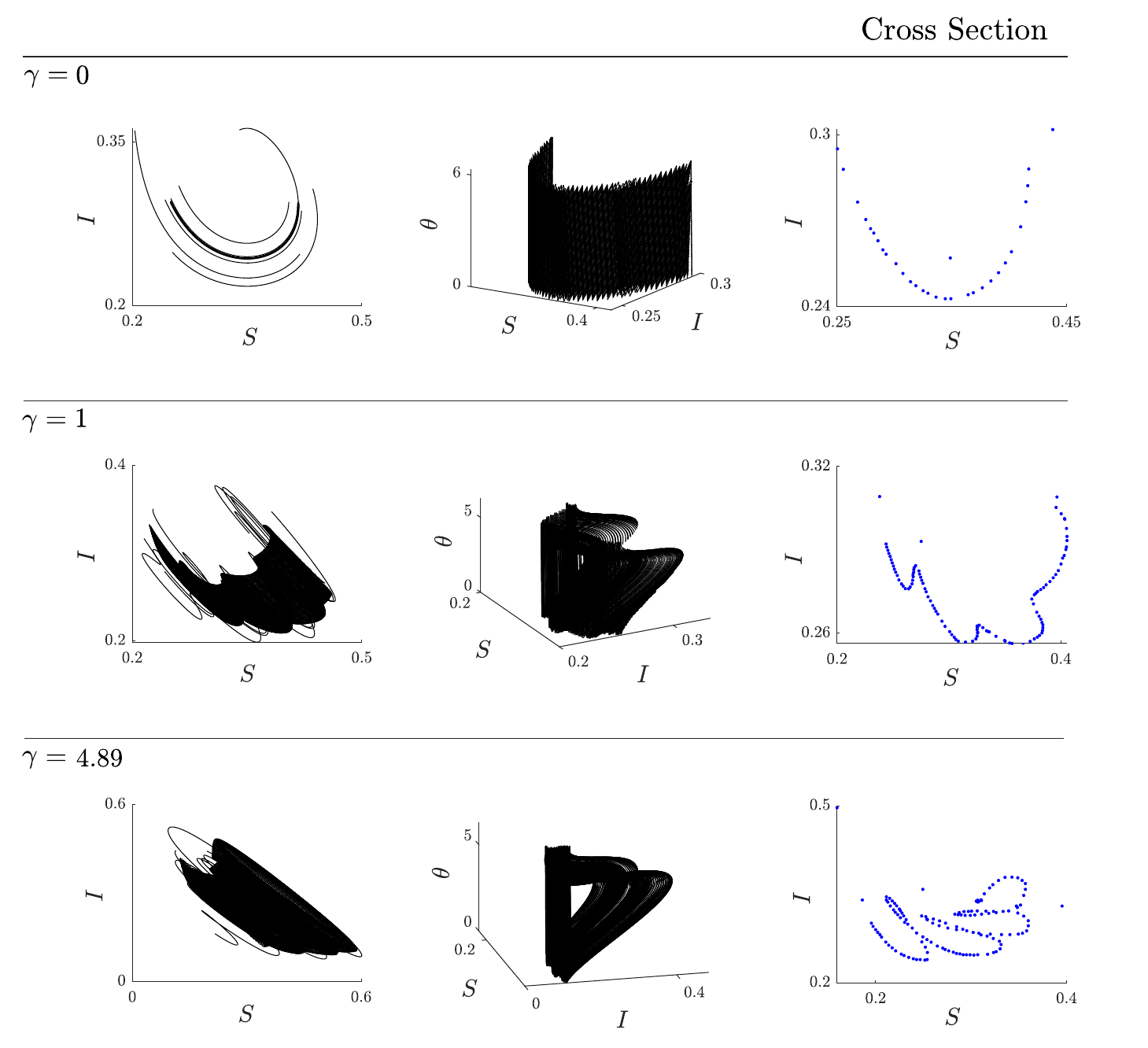}
\caption{\small Numerical simulation of the phase space $(S,I)$, $(S,I,\theta)$, and respective cross-section ($\theta = 2$) for three different values of $\gamma$ of \eqref{gamma>0}. Parameter values used in this simulation: $A = 1$, $T = 4$, $\beta_0 = 2$, $\omega = 6$, $\sigma = 0.2$, $g = 0.5$ and $p = 0.4$ with initial condition $(S_0, I_0, \theta_0) = (0.4074, 0.2645, 0)$. This simulation corresponds to Region  {\Large \ding{175}}, where $\mathcal{R}_p > 1$. 
The system exhibits chaos if $\gamma=4.89$ with a positive Lyapunov exponent  ($0.13$).}
\label{SItheta_R3}
\end{figure*}

\section{Discussion and Final Remarks} \label{s:section}

In this work, we have studied a modified SIR model, introducing logistic growth to the population of the {\it Susceptible} individuals and incorporating pulse vaccination ({\it Susceptible} individuals are $T$-periodically vaccinated),  conferring  immunity to a proportion $p$ of the {\it Susceptible} population. Additionally,  we have explored the model with and without seasonality   into the disease transmission rate. 

\subsection*{Results}

Regarding our first main result (Theorem \ref{thA}), in the absence of seasonality in the disease transmission rate ($\gamma=0$), the analysis of the model reveals five stationary scenarios, depicted in the $(T,p)$ bifurcation diagram of Figures \ref{Bif_Diag_ThA} and \ref{art_}.  The non-trivial periodic $\omega$-limit sets have the same period as the initial pulse.

For $p=0$  and $\mathcal{R}_0> 1$, the outcomes align with those of  \cite{CarvalhoRodrigues2023}: the {\it $\omega$-limit} of  all solutions with initial condition $S_0, I_0>0$ is the endemic equilibrium. For $T>0$ fixed and values of $p$ between $0$ and $p_2(T)$, our model is {\it permanent}. In this case, the associated {\it basic reproduction number} is greater than 1. This  agrees well with the findings of  \cite{Meng2008, Yongzhen2011, ZouGaoZhong2009}. Our contribution goes further in this direction --  we have proved the existence of a  globally stable  endemic {\it $T$-periodic} solution.

For $p\in (p_2(T), p_1(T))$, system   \eqref{modelo3} exhibits a disease-free non-trivial periodic solution, where the curves $p=p_1(T)$ and $p=p_2(T)$ correspond to  saddle-node and transcritical bifurcations, respectively.  

Under the effect of  seasonality in the disease transmission rate, if  $p<p_2(T)$, \,$k \tau/\omega = T$ (within a resonance wedge), and $\gamma \gg 1$ (large), Theorem \ref{th: mainB} indicates that the flow of \eqref{modeloSIR} exhibits a suspended topological horseshoe. Consequently, the number of {\it Infectious} individuals persists and is more difficult to control the disease. The proof of this result follows the same lines of  \cite{Anishchenko2007}.  Our results stress that insufficient vaccination coverage combined with seasonality can generate chaotic dynamics.

We have presented  numerical simulations to demonstrate that the periodic nature of pulse vaccination combined with seasonality might spread of the disease. Corollary \ref{cor1} and Figure \ref{p1p2SAZ} warn that neglecting the effect of the amplitude of the seasonal transmission  could cause an over-optimistic approximation of  the optimal pulse period \cite{Choisy2006}.

\subsection*{Literature}

In a recent study of a SIR model with pulse vaccination, the author of \cite{Herrera2023} pointed out two mechanisms that lead a classical SIR model to chaotic dynamics with low vaccine coverage. The first mechanism arises from the interaction of low birth and death rates combined with a high   contact rate between individuals. The second mechanism results from high birth and low contact rates.

 In our paper, we have analytically proved the emergence of chaos through a different technique: modulating the contact rate of the disease through a periodic function (seasonality), system \eqref{modeloSIR} may exhibit chaos under generic assumptions. Mathematically, the author of \cite{Herrera2023} proved chaos via the Zanolin's method \cite{Zanolin2010} (stretching rectangles along paths); in our paper, chaos emerges via Torus-breakdown theory (\cite{Anishchenko1993} and \cite[\S 2.1.4]{Anishchenko2007}).

\subsection*{Pulse vs. constant vaccination}
We compare the findings of \cite{CarvalhoRodrigues2023, Shulgin1998} with the present work.
Pulse vaccination requires a smaller percentage of people vaccinated to prevent a possible epidemic outbreak. 
 Since pulse vaccination is periodic, it is enough to vaccinate a proportion $p$ of the population to keep the number of {\it Susceptible} individuals below prescribed epidemic limits. On the other hand, constant vaccination requires a high rate of {\it Susceptible} individuals to be vaccinated to avoid spreading the disease.

 While constant vaccination offers constant and stable protection to the population, pulse vaccination can lead to poorly planned epidemic outbreaks, especially if the number of {\it Susceptible} individuals exceeds epidemic limits.

\subsection*{Open problem}
Since $C^2$--hyperbolic horseshoes  have zero Lebesgue measure, it is  possible for a map to have a horseshoe, and at the same time, the orbit of Lebesgue-almost every point tends to a sink. 
 Adapting the proof of \cite{WangYoung2003} we conjecture that (concerning system \eqref{modelo2}) if $\omega$ is large enough, then

\smallskip

\begin{equation*}
\label{abundance}
\liminf_{\varepsilon\rightarrow 0^+}\, \,  \frac{ {\rm Leb} \left\{\gamma \in [0,\varepsilon]: {f}_\gamma  \text{  exhibits a   strange attractor in the sense of  \cite{WangYoung2003}}\right\}}{\varepsilon}  >0,
\end{equation*}

\medskip

\noindent where  {Leb} denotes the one-dimensional Lebesgue measure.  
   Within the strange attractor, orbits jump around in a seemingly random fashion, and the future of individual orbits appears entirely unpredictable. For these chaotic attractors, however, there are laws of statistics that control the asymptotic distributions of Lebesgue almost all orbits in the basin of attraction of the attractor.

\subsection*{Final remarks and future work}
Our study suggests that the effective strategy for controlling an infectious disease governed by \eqref{modeloSIR} must be in a way that the proportion $p$ of vaccination is at the target level needed for the disease eradication, and the time $T$ between shots must be appropriate. The results suggest that maintaining a high proportion $p$ of vaccination and optimizing the period $T$  between two shots   shall increase the effectiveness of the vaccination strategy.
The inclusion of seasonality has dramatic impacts on the dynamics. 

One of the main challenges of the World Health Organization is the global eradication of certain diseases in countries with low incomes and high birth rates. From an applied perspective, our results could be  helpful for the optimal design of vaccination programs. 

Further refinements of the pulse vaccination strategy need to take seasonal disease dynamics into full account.  From a theoretical perspective, there is a need to develop analytical epidemiological models that incorporate information such as the natural period of the disease under analysis (resonance dynamics of \cite{Choisy2006}).

 Some lines of research need to be tackled such as the dependence of vaccination efficacy over time, multiresolution modeling and the availability of medical resources \cite{Bajiya2022}. Cross-immunity and delayed vaccination are   important factors to be considered  \cite{Goel2023}. In addition, strategies such as preventive vaccination and the impact of the media should also be explored \cite{Kumar2021, Kumar2022}. Finally, pulse vaccination modelling can be extended to include multi-group epidemic models, providing a more detailed and realistic analysis of disease dynamics in distinct subpopulations. These  directions could significantly improve our understanding of the epidemiological models.

\section*{Acknowledgements}

The authors are grateful to the anonymous referees, whose attentive reading and useful comments improved the final version of the article.

\end{document}